\pgfplotsset{compat=newest} 
\pgfplotsset{plot coordinates/math parser=false}
\newlength\fwidth
\definecolor{myBlue}{rgb}{0.0,0.0,0.55}
\newcommand{\LC}[1]{\textcolor{red}{#1}}
  \newcounter{mnote}
  \newcommand{\mnote}[1]{\addtocounter{mnote}{1}
    \ensuremath{{}^{\bullet\arabic{mnote}}}
    \marginpar{\footnotesize\em\color{red}\ensuremath{\bullet\arabic{mnote}}#1}}
  \let\oldmarginpar\marginpar
    \renewcommand\marginpar[1]{\-\oldmarginpar[\raggedleft\footnotesize #1]%
    {\raggedright\footnotesize #1}}
\newcommand{\breakline}{
\begin{center}
------------------------------------------------------------------------------------------------------------
\end{center}
}
\newtheorem{theorem}{Theorem}[section]
\newtheorem{lemma}[theorem]{Lemma}
\newtheorem{corollary}[theorem]{Corollary}
\newtheorem{example}[theorem]{Example}
\newtheorem{remark}[theorem]{Remark}
\newcommand{\dx}{\,{\rm d}x}
\newcommand{\dd}{\,{\rm d}}
\newcommand{\bs}{\boldsymbol}
\DeclareMathOperator*{\img}{img}
\DeclareMathOperator*{\spa}{span}
\newcommand{\curl}{{\rm curl\,}}
\renewcommand{\div}{\operatorname{div}}
\newcommand{\grad}{{\rm grad\,}}
\DeclareMathOperator*{\rot}{rot}
\DeclareMathOperator{\dist}{dist}
\newcommand{\tr}{\operatorname{tr}}
\newcommand{\sym}{\operatorname{sym}}
\newcommand{\skw}{\operatorname{skw}}
\newcommand{\mskw}{\operatorname{mskw}}
\newcommand{\hess}{\operatorname{hess}}
\newcommand{\vertiii}[1]{{\left\vert\kern-0.25ex\left\vert\kern-0.25ex\left\vert #1 
    \right\vert\kern-0.25ex\right\vert\kern-0.25ex\right\vert}}
\newcommand{\Oplus}{\ensuremath{\vcenter{\hbox{\scalebox{1.5}{$\oplus$}}}}}
\newcommand{\air}{\operatorname{Air}}
\newcommand{\sskw}{\operatorname{sskw}}
\begin{document}
\title{Finite Element Complexes in Two Dimensions}

 \author{Long Chen}%
 \address{Department of Mathematics, University of California at Irvine, Irvine, CA 92697, USA}%
 \email{chenlong@math.uci.edu}%
 \author{Xuehai Huang}%
 \address{Corresponding author. School of Mathematics, Shanghai University of Finance and Economics, Shanghai 200433, China}%
 \email{huang.xuehai@sufe.edu.cn}%

 \thanks{The first author was supported by NSF DMS-1913080 and DMS-2012465. The second author was supported by the National Natural Science Foundation of China Project 12171300 and the Natural Science Foundation of Shanghai 21ZR1480500.}
\thanks{Accepted by SCIENTIA SINICA Mathematica (in Chinese).}

% \subjclass[2010]{
% %65N55;   %% Multigrid methods; domain decomposition
% %65F10;   %%  Iterative methods for linear systems
% 65N30;   %%  Finite elements, Rayleigh-Ritz and Galerkin methods, finite methods;
% 65N12;   %%  Stability and convergence of numerical methods;
% 65N22;   %%  Solution of discretized equations
% %65N15;   %%  Error bounds
% }

\begin{abstract}
In this study, two-dimensional finite element complexes with various levels of smoothness, including the de Rham complex, the curldiv complex, the elasticity complex, and the divdiv complex, are systematically constructed. Smooth scalar finite elements in two dimensions are developed based on a non-overlapping decomposition of the simplicial lattice and the Bernstein basis of the polynomial space, with the order of differentiability at vertices being greater than twice that at edges. Finite element de Rham complexes with different levels of smoothness are devised using smooth finite elements with smoothness parameters that satisfy certain relations. Finally, finite element elasticity complexes and finite element divdiv complexes are derived from finite element de Rham complexes by using the Bernstein-Gelfand-Gelfand (BGG) framework. This study is the first work to construct finite element complexes in a systematic way. Moreover, the novel tools developed in this work, such as the non-overlapping decomposition of the simplicial lattice and the discrete BGG construction, can be useful for further research in this field.
\end{abstract}

\maketitle

%\tableofcontents
\section{Introduction}
%\LC{Importance on Hilbert complexes.}
Hilbert complexes play a fundamental role in the theoretical analysis and the design of stable numerical methods for partial differential equations~\cite{ArnoldFalkWinther2006,Arnold;Falk;Winther:2010Finite,Arnold:2018Finite,ChenHuang2018}. Recently in~\cite{Arnold;Hu:2020Complexes}  Arnold and Hu have developed a systematical approach to derive new complexes from well-understood differential complexes such as the de Rham complex involving Sobolev spaces. In this work we shall construct two-dimensional finite element complexes with various smoothness in a systematic way, including finite element de Rham complexes, finite element elasticity complexes, and finite element divdiv complexes etc.

We first construct smooth finite elements in two dimensions by a geometric approach, in which the simplicial lattice $\mathbb T^{2}_k = \left \{ \alpha = (\alpha_0, \alpha_1, \alpha_2)\in\mathbb N^{0:2} \mid  \alpha_0+ \alpha_1+ \alpha_2 = k \right \}$ as the multi-index set with sum $k$ is employed. The smoothness (order of differentiability) at vertices and edges are specified by parameters $r^{\texttt{v}}$ and $r^e$, respectively. Let  $\mathcal T_h$ be a triangulation of a domain $\Omega \subset \mathbb R^2$ and denote by $\bs r = (r^{\texttt{v}}, r^e)$. When $r^{\texttt{v}}\geq 2r^e$ and $k\geq 2r^{\texttt{v}} + 1$, we construct $C^{r^e}$-continuous finite element spaces  $\mathbb V_k(\mathcal T_h; \bs r)$ using a non-overlapping decomposition (partition) of the simplicial lattice $\mathbb T_k^2$ and the Bernstein basis $\{\lambda^{\alpha}, \alpha\in \mathbb T^{2}_k\}$ of polynomial space $\mathbb P_k$, where $\lambda$ is the barycentric coordinate. Notice that the $C^{r^e}$-continuity implies   $\mathbb V_k(\mathcal T_h; \bs r)\subset H^{r^e+1}(\Omega)$. 

We then move to the finite element de Rham complexes with various smoothness which include discrete versions of the de Rham complex, for $r\geq 1$, 
\begin{equation}\label{eq:Stokescomplex}
\mathbb R \stackrel{\subset}{\longrightarrow} H^{r+1}(\Omega) \stackrel{\text { curl }}{\longrightarrow} \bs H^{r}(\Omega; \mathbb{R}^2) \stackrel{\text { div }}{\longrightarrow} H^{r-1}(\Omega) \longrightarrow 0, 
\end{equation}
and one with mixed regularities, for $r\geq 0, s\geq \max\{r-1,0\}$, 
\begin{equation}\label{eq:generalcase}
\mathbb R \stackrel{\subset}{\longrightarrow} H^{r+1}(\Omega) \stackrel{\text { curl }}{\longrightarrow} \bs H^{r, s} (\div, \Omega) \stackrel{\text { div }}{\longrightarrow} H^s(\Omega) \longrightarrow 0,
\end{equation}
where
$$
\boldsymbol{H}^{r, s}(\div,\Omega):=\{\boldsymbol{v}\in\boldsymbol{H}(\div,\Omega): \boldsymbol{v}\in\boldsymbol{H}^{r}(\Omega;\mathbb R^2), \div\boldsymbol{v}\in H^{s}(\Omega)\}.
$$
Obviously \eqref{eq:Stokescomplex} is a special case of \eqref{eq:generalcase} and also known as the Stokes complex. 

Given three integer vectors $\bs r_0 =(r_0^{\texttt{v}}, r_0^e)$, $\bs r_1 =(r_1^{\texttt{v}}, r_1^e)$, $\bs r_2 =(r_2^{\texttt{v}}, r_2^e)$ satisfying $r_1^{\texttt{v}}\geq 2 \, r_1^e + 1, r_2^{\texttt{v}}\geq 2 \, r_2^e$ and $\bs r_0 = \bs r_1 + 1, \bs r_1\geq -1, \boldsymbol{r}_2\geq \boldsymbol{r}_1\ominus 1
$, for $k$ sufficiently large,
% $k\geq \max\{2r_1^{\texttt{v}}+2, 2r_2^{\texttt{v}}+2, 1\}$, 
 we devise finite element de Rham complexes of various smoothness
\begin{equation}\label{eq:introfemcomplex1}
\mathbb R\xrightarrow{\subset} \mathbb V^{\curl}_{k+1}(\mathcal T_h; \boldsymbol{r}_0)\xrightarrow{\curl}\mathbb V^{\div}_{k}(\mathcal T_h; \boldsymbol{r}_1,\boldsymbol{r}_2) \xrightarrow{\div} \mathbb V^{L^2}_{k-1}(\mathcal T_h; \boldsymbol{r}_2)\xrightarrow{}0,
\end{equation}
which is a conforming discretization of the de Rham complex \eqref{eq:generalcase}. The finite element de Rham complex \eqref{eq:introfemcomplex1} with $\bs r_0 = \bs r_1 + 1$ and $\bs r_2 = \bs r_1 - 1$ has been developed recently in~\cite{huConstructionConformingFinite2021}.
We refer to
\cite{MardalTaiWinther2002,GuzmanNeilan2012} for some nonconforming Stokes complexes modified from conforming finite element de Rham complexes.

%$$
%\mathbb R\xrightarrow{\subset} H^{r_0^e+1}(\Omega)\xrightarrow{\curl} \boldsymbol{H}^{r_1^e+1, r_2^e+1}(\div,\Omega)\xrightarrow{\div} H^{r_2^e+1}(\Omega)\xrightarrow{}0,
%$$
%where $H^s(\Omega)$ is the standard Sobolev space and
%$$
%\boldsymbol{H}^{r_1^e+1, r_2^e+1}(\div,\Omega):=\{\boldsymbol{v}\in\boldsymbol{H}(\div,\Omega): \boldsymbol{v}\in\boldsymbol{H}^{r_1^e+1}(\Omega,\mathbb R^2), \div\boldsymbol{v}\in H^{r_2^e+1}(\Omega)\}.
%$$
By rotation of the vector field and differential operators, we also obtain the finite element de Rham complex involving $\grad, \rot$ operators:
\begin{equation}\label{eq:introfemcomplex2}
\mathbb R\xrightarrow{\subset} \mathbb V^{\grad}_{k+1}(\mathcal T_h; \boldsymbol{r}_0)\xrightarrow{\grad}\mathbb V^{\rot}_{k}(\mathcal T_h; \boldsymbol{r}_1,\boldsymbol{r}_2) \xrightarrow{\rot} \mathbb V^{L^2}_{k-1}(\mathcal T_h; \boldsymbol{r}_2)\xrightarrow{}0,
\end{equation}
in which the space $\mathbb V^{\rot}_{k}(\mathcal T_h; \boldsymbol{r}_1,\boldsymbol{r}_2)$ can find applications in the discretization of Maxwell equation or the fourth-order curl problems.

Several existing finite element de Rham complexes in two dimensions are special examples of~\eqref{eq:introfemcomplex1} or~\eqref{eq:introfemcomplex2}, and summarized in Table~\ref{table:femcomplexexamples}.
\begin{table}[htp]
	\centering
	\caption{Examples of finite element de Rham complexes \eqref{eq:introfemcomplex1}.}
\label{table:femcomplexexamples}
	\renewcommand{\arraystretch}{1.35}
	\begin{tabular}{@{} c c c c c @{}}
	\toprule
$k\geq $	 &	 $\bs r_0$ &  $\bs r_1$ & $\bs r_2$ & Results \\

%		&  \multicolumn{2}{c}{$\bs r_0$}&   \multicolumn{2}{c}{$\bs r_1$} &  \multicolumn{2}{c}{$\bs r_2$} & Results \\
%		\cline{2-3}		\cline{4-5}    \cline{6-7}
%		&   $\texttt{v}$ & $e$ & $\texttt{v}$ & $e$  & $\texttt{v}$ & $e$		
 \hline
$1$ & $(0, 0)$ & $(-1, -1)$ & $(-1, -1)$ & standard\\
$4$ & $(0, 0)$ & $(-1, -1)$ & $(0, 0)$ &~\cite[Section 5.2.1]{HuZhangZhang2020curlcurl}\\
$4$ & $(2, 1)$ & $(1, 0)$ & $(0, -1)$ &~\cite[Section 3]{FalkNeilan2013} \\
$4$ & $(2, 1)$ & $(1, 0)$ & $(0, 0)$ &~\cite[Section 4]{FalkNeilan2013} \\
$2$ & $(1, 0)$ & $(0, -1)$ & $(-1, -1)$ &~\cite[Section 2.2]{Christiansen;Hu;Hu:2018finite} 
\medskip \\

\bottomrule
	\end{tabular}
%	\label{table:derhamexamples}
\end{table}

%\begin{itemize}
%\item~\cite[Section 3]{FalkNeilan2013}: case $\bs r_1=(1,0)$, $\bs r_2=(0,-1)$,
%
%
%\item~\cite{Christiansen;Hu;Hu:2018finite}: case $\bs r_1=(0,-1)$, $\bs r_2=-1$,
%
%\item~\cite[Section 4]{FalkNeilan2013}: case $\bs r_1=(1,0)$, $\bs r_2=0$,
%
%\item~\cite[Section 5.2.1]{HuZhangZhang2020curlcurl}: the rotation of the finite element complex for $\bs r_1=-1$ and $\bs r_2=0$.
%
%\end{itemize} 

Based on finite element de Rham complexes, we use the Bernstein-Gelfand-Gelfand (BGG) framework~\cite{Arnold;Hu:2020Complexes} to construct more finite element complexes. For $\bs r_1\geq -1$ and $\bs r_2\geq \boldsymbol{r}_1\ominus 1$ satisfying $r_1^{\texttt{v}}\geq 2 \, r_1^e + 2, r_2^{\texttt{v}}\geq 2 \, r_2^e,$ and polynomial degree $k$ sufficiently large,
%$k\geq \max\{2r_1^{\texttt{v}} + 3, 2r_2^{\texttt{v}}+2\}$, 
we design the BGG diagram
\begin{equation*}%\label{eq:BGGelasticity}
\begin{tikzcd}
\mathbb R \arrow{r}{\subset}
&
\mathbb V_{k+2}^{\curl}(\bs r_1+2)
\arrow{r}{\curl}
 &
\mathbb V_{k+1}^{\div} (\bs r_1+1)
   \arrow{r}{\div}
 &
\mathbb V^{L^2}_k(\bs r_1)
 \arrow{r}{}
 & 0 \\
 \mathbb R^2 \arrow{r}{\subset}
&
\mathbb V_{k+1}^{\curl}(\bs r_1+1;\mathbb R^2)
 \arrow[ur,swap,"{\rm id}"] \arrow{r}{\curl}
 & 
\mathbb V_{k}^{\div}(\bs r_1, \bs r_2;\mathbb M) 
 \arrow[ur,swap,"{\rm -2\,sskw}"] \arrow{r}{\div}
 & 
\mathbb V^{L^2}_{k-1}(\bs r_2; \mathbb R^2) 
\arrow[r] 
 &\boldsymbol{0} 
\end{tikzcd}
\end{equation*}
which leads to the finite element elasticity complex
\begin{equation}\label{intro:elasticityfemcomplex}
{\mathbb P}_1\xrightarrow{\subset} \mathbb V_{k+2}^{\curl}(\bs r_1+2)\xrightarrow{\air}\mathbb V_{k}^{\div}(\bs r_1, \bs r_2; \mathbb S)\xrightarrow{\div}  \mathbb V^{L^2}_{k-1}(\bs r_2; \mathbb R^2)\xrightarrow{}\boldsymbol{0}.
\end{equation}
%where $\mathbb V_{k}^{\div}(\bs r_1, \bs r_2; \mathbb S):=\mathbb V_{k}^{\div}(\bs r_1, \bs r_2;\mathbb M)\cap\ker(\sskw)$. And we figure out the DoFs of space $\mathbb V_{k}^{\div}(\bs r_1, \bs r_2; \mathbb S)$ by symmetrizing the DoFs of space $\mathbb V_{k}^{\div}(\bs r_1, \bs r_2;\mathbb M)$.

For $\bs r_1\geq 0$, $\bs r_2\geq \max\{\bs r_1 - 2,-1\}$, and $k\geq \max\{2r_1^{\texttt{v}} + 3, 2r_2^{\texttt{v}}+3\}$, we build the BGG diagram
 \begin{equation*}%\label{eq:femdivdivbgg}
\begin{tikzcd}[column sep=0.5cm]
\mathbb R^2 \arrow{r}{\subset}
&
\mathbb V_{k+1}^{\curl}(\bs r_1+1; \mathbb R^2)
\arrow{r}{\curl}
 &
\mathbb V_{k}^{\div\div^+} (\bs r_1, \bs r_2; \mathbb M)
   \arrow{r}{\div}
 &
\mathbb V^{\div}_{k-1}(\bs r_1 - 1, \bs r_2)
 \arrow{r}{}
 & \boldsymbol{0} \\
 \mathbb R \arrow{r}{\subset}
&
\mathbb V_{k}^{\curl}( \bs r_1)
 \arrow[ur,swap,"{\rm mskw}"] \arrow{r}{\curl}
 & 
\mathbb V_{k-1}^{\div}(\bs r_1 - 1, \bs r_2) 
 \arrow[ur,swap,"{\rm id}"] \arrow{r}{\div}
 & 
\mathbb V^{L^2}_{k-2}(\bs r_2) 
\arrow[r] 
 &0 
\end{tikzcd}
\end{equation*}
which leads to the finite element divdiv complex
\begin{equation}\label{intro:continuousdivdivcomplex}
{\bf RT}\xrightarrow{\subset} 
\mathbb V_{k+1}^{\curl}(\bs r_1+1;\mathbb R^2)
\xrightarrow{{\sym\curl}}
\mathbb V_{k}^{\div\div^+} (\bs r_1, \bs r_2; \mathbb S)\xrightarrow{\div\div}  
\mathbb V^{L^2}_{k-2}(\bs r_2) 
\xrightarrow{}0,
\end{equation}
where $\mathbb V_{k}^{\div\div^+}(\bs r_1, \bs r_2; \mathbb S) \subset \boldsymbol{H}(\operatorname{divdiv}, \Omega ; \mathbb{S}) \cap \boldsymbol{H}(\operatorname{div}, \Omega ; \mathbb{S})$. We refer to Section~\ref{sec:femcomplexbgg} for details. By a refinement of the BGG diagram, the finite element divdiv complexes presented in~\cite{Hu;Ma;Zhang:2020family} and~\cite{ChenHuang2020} with $\bs r_1=(0,-1),\bs r_2=(-1,-1)$ are also covered.

Several existing finite element complexes in two dimensions can be viewed as special cases of~\eqref{intro:elasticityfemcomplex} or~\eqref{intro:continuousdivdivcomplex}, and are summarized in Table~\ref{table:complexampless}. However, discrete elasticity complexes and rot\,rot complexes based on the Clough-Tocher split in~\cite{ChristiansenHu2022} are constructed using piece-wise polynomials as shape functions, which are not covered by~\eqref{intro:elasticityfemcomplex} and \eqref{intro:continuousdivdivcomplex}.
\begin{table}[htp]
	\centering
	\caption{Examples of finite element elasticity and finite element divdiv complexes.}
	\renewcommand{\arraystretch}{1.35}
	\begin{tabular}{@{} c c c c c @{}}
	\toprule
	Type & $k\geq$ &	 $\bs r_1$ & $\bs r_2$ & Results \\

%		&  \multicolumn{2}{c}{$\bs r_0$}&   \multicolumn{2}{c}{$\bs r_1$} &  \multicolumn{2}{c}{$\bs r_2$} & Results \\
%		\cline{2-3}		\cline{4-5}    \cline{6-7}
%		&   $\texttt{v}$ & $e$ & $\texttt{v}$ & $e$  & $\texttt{v}$ & $e$		
 \hline
Elasticity complex \eqref{intro:elasticityfemcomplex} & $3$ & $(0,-1)$ & $(-1, -1)$ &~\cite[Section 6]{Christiansen;Hu;Hu:2018finite}\\
Hessian complex (rotation of \eqref{intro:elasticityfemcomplex})& $5$ & $(0, -1)$ & $(0, 0)$ &~\cite[Section 5.1]{Chen;Huang:2021Finite} \\
divdiv complex \eqref{intro:continuousdivdivcomplex} & $6$ & $(1, 0)$ & $(0, 0)$ &~\cite[Section 5.2]{Chen;Huang:2021Finite}
 \\
divdiv complex \eqref{eq:femdivdivcomplex} & $3$ & $(0, -1)$ & $(-1, -1)$ &~\cite[Section 2.3]{Hu;Ma;Zhang:2020family} \\
divdiv complex \eqref{eq:divdivfemcomplex} & $3$ & $(0, -1)$ & $(-1, -1)$  &~\cite[Section 3.3]{ChenHuang2020}
\medskip \\

\bottomrule
	\end{tabular}
	\label{table:complexampless}
\end{table}

The rest of this paper is organized as follows. The de Rham complex and BGG framework are reviewed in Section~\ref{sec:hilbertcomplex}. 
In Section~\ref{sec:geodecomp2d} the geometric decomposition of $C^m$-conforming finite elements in two dimensions is studied. Finite element de Rham complexes with various smoothness are constructed in Section~\ref{sec:femderhamcomplex}.
More finite element complexes based on the BGG approach are developed in Section~\ref{sec:femcomplexbgg}.

\section{Preliminaries on Hilbert complexes}\label{sec:hilbertcomplex}
%In this paper, we shall consider Hilbert complexes associated to a two-dimensional domain $\Omega$ and $\dd_i$ are some differential operators of scalar, vector, or tensor functions. 
%
\subsection{Notation}
For scalar function $v$, denote
$$
\mskw v:=\begin{pmatrix}
0 & -v \\
v & 0
\end{pmatrix},\quad \curl v:=\left(
\frac{\partial v}{\partial x_2},
-\frac{\partial v}{\partial x_1}
\right)^{\intercal} = (\grad v)^{\perp},
$$
where $(a,b)^{\perp} := (b, -a)$ is the $90^{\circ}$ rotation clock-wisely, $\hess:=\grad\grad$, and
$$
\air v:=\curl\curl v=\begin{pmatrix}
\medskip
\frac{\partial^2v}{\partial x_2^2} & -\frac{\partial^2v}{\partial x_1\partial x_2} \\
-\frac{\partial^2v}{\partial x_1\partial x_2} & \frac{\partial^2v}{\partial x_1^2}
\end{pmatrix}.
$$
Then $\div(\mskw v)=-\curl v$.
For vector function $\boldsymbol{v}=(v_1, v_2)$, denote
$$
\rot\boldsymbol{v}:=\frac{\partial v_2}{\partial x_1}-\frac{\partial v_1}{\partial x_2} = \div \bs v^{\perp}.
$$
For tensor function $\boldsymbol{\tau}=\begin{pmatrix}
\tau_{11} & \tau_{12} \\
\tau_{21} & \tau_{22}
\end{pmatrix}$, denote
\begin{align*}
&\sym\boldsymbol{\boldsymbol{\tau}}:=\frac{1}{2}(\boldsymbol{\tau}+\boldsymbol{\tau}^{\intercal}),\, \skw\boldsymbol{\boldsymbol{\tau}}:=\frac{1}{2}(\boldsymbol{\tau}-\boldsymbol{\tau}^{\intercal}),\,\\
&\sskw\boldsymbol{\boldsymbol{\tau}}:=\mskw^{-1}\circ\skw\boldsymbol{\boldsymbol{\tau}}=\frac{1}{2}(\tau_{21}-\tau_{12}).
\end{align*}
By direct calculation, we have 
\begin{equation}\label{eq:anticommutativeprop1}  
\div\boldsymbol{v}=2\,\sskw(\curl \boldsymbol{v}), \quad \rot\boldsymbol{v} = 2\,\sskw(\grad \boldsymbol{v}).
\end{equation}

\subsection{Hilbert complex and exact sequence}
A Hilbert complex is a sequence of Hilbert spaces $\{\mathcal V_i \}$ connected by a sequence of closed densely defined linear operators $\{\dd_i\}$ 
\begin{equation}\label{eq:Hilbertcomplex}
0 \stackrel{}{\hookrightarrow} \mathcal V_1 \stackrel{\dd_1}{\longrightarrow} \mathcal V_2 \stackrel{\dd_2}{\longrightarrow} \cdots \stackrel{\dd_{n-2}}{\longrightarrow}\mathcal V_{n-1}\stackrel{\dd_{n-1}}{\longrightarrow} \mathcal V_n \stackrel{\dd_{n}}{\longrightarrow} 0,
\end{equation}
satisfying the property $\img (\dd_i)\subseteq \ker(\dd_{i+1})$. 
% Therefore in the Hilbert complex \eqref{eq:Hilbertcomplex}, $\dd_1$ is injective. 
We will abbreviate Hilbert complexes as complexes. The complex is called an exact sequence if $\ker(\dd_{1})=0$ and $\img (\dd_i)=  \ker(\dd_{i+1})$ for $i=1, \ldots, n-1$. Therefore if  \eqref{eq:Hilbertcomplex} is exact, $\dd_1$ is injective and $\dd_{n-1}$ is surjective. To save notation, we usually skip the trivial space $0$ in the beginning of the complex and use the embedding $\mathcal V_1 \stackrel{\subset}{\longrightarrow} \mathcal V_2$ to indicate $\dd_1$ is injective. For more background on Hilbert complexes, we refer to~\cite{Arnold:2018Finite}. 

When the Hilbert spaces are finite-dimensional, to verify the exactness, we rely on the following result on the dimension count.
\begin{lemma}\label{lm:abstract}
Let 
\begin{equation}\label{eq:shortexactsequence}
\mathcal V_0 \stackrel{\subset}{\longrightarrow} \mathcal V_1 \stackrel{\dd_1}{\longrightarrow} \mathcal V_2 \stackrel{\dd_2}{\longrightarrow} \mathcal V_{3}\longrightarrow 0,
\end{equation}
be a complex, where $\mathcal V_i$ are finite-dimensional linear spaces for $i=0,\ldots,3$. Assume $\mathcal V_0=\mathcal V_1\cap\ker(\dd_1)$, and 
\begin{equation}\label{eq:abstractdimenidentity}
\dim \mathcal V_0 - \dim \mathcal V_1 + \dim \mathcal V_2 - \dim \mathcal V_3 = 0.
\end{equation}
If either $\dd_1\mathcal V_1 = \mathcal V_2\cap \ker(\dd_2)$ or $\dd_2 \mathcal V_2 = \mathcal V_3$, then complex~\eqref{eq:shortexactsequence} is exact. 
\end{lemma}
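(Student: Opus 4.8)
The plan is to run a rank--nullity bookkeeping argument and collapse everything onto a single numerical identity that links the two possible failures of exactness. First I would record what the hypotheses already hand us: since $\mathcal V_0 = \mathcal V_1 \cap \ker(\dd_1)$, the sequence is exact at $\mathcal V_1$, so only exactness at $\mathcal V_2$ and at $\mathcal V_3$ remains to be shown. Because \eqref{eq:shortexactsequence} is a complex, I always have the containments $\img(\dd_1) \subseteq \ker(\dd_2)$ and $\img(\dd_2) \subseteq \mathcal V_3$. Hence exactness at $\mathcal V_2$ is equivalent to the single scalar equality $\dim \img(\dd_1) = \dim \ker(\dd_2)$, and exactness at $\mathcal V_3$ is equivalent to $\dim \img(\dd_2) = \dim \mathcal V_3$; in each case the containment lets me upgrade equality of dimensions to equality of subspaces.

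Next I would apply rank--nullity to each operator. From $\dd_1$ together with the hypothesis $\dim \ker(\dd_1) = \dim \mathcal V_0$ I get $\dim \img(\dd_1) = \dim \mathcal V_1 - \dim \mathcal V_0$, and from $\dd_2$ I get $\dim \ker(\dd_2) = \dim \mathcal V_2 - \dim \img(\dd_2)$. Substituting both into the difference $\dim \ker(\dd_2) - \dim \img(\dd_1)$ and using the Euler identity \eqref{eq:abstractdimenidentity} to replace $\dim \mathcal V_0 - \dim \mathcal V_1 + \dim \mathcal V_2$ by $\dim \mathcal V_3$, I expect to arrive at the clean relation
\[
\dim \ker(\dd_2) - \dim \img(\dd_1) = \dim \mathcal V_3 - \dim \img(\dd_2).
\]
By the two containments above, both sides of this relation are nonnegative, so one side vanishes precisely when the other does.

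Finally I would finish by cases. If $\dd_2 \mathcal V_2 = \mathcal V_3$, the right-hand side is zero, forcing $\dim \img(\dd_1) = \dim \ker(\dd_2)$; combined with $\img(\dd_1) \subseteq \ker(\dd_2)$ this gives equality of spaces, i.e.\ exactness at $\mathcal V_2$. Symmetrically, if $\dd_1 \mathcal V_1 = \mathcal V_2 \cap \ker(\dd_2) = \ker(\dd_2)$, then the left-hand side is zero, forcing $\dim \img(\dd_2) = \dim \mathcal V_3$ and hence exactness at $\mathcal V_3$. In either case, together with the exactness at $\mathcal V_1$ already noted, the complex is exact. The argument is essentially routine; the only point demanding care is the passage from an equality of dimensions to an equality of the underlying spaces, which is legitimate only because the relevant containment is supplied by the complex property, so I would be sure to invoke that containment explicitly rather than conclude exactness from the dimension count in isolation.
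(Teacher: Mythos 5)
Your proposal is correct and follows essentially the same route as the paper: both reduce everything to the identity $\dim(\mathcal V_2\cap\ker(\dd_2))-\dim \dd_1\mathcal V_1 = \dim\mathcal V_3-\dim\dd_2\mathcal V_2$ obtained from rank--nullity, the hypothesis $\mathcal V_0=\mathcal V_1\cap\ker(\dd_1)$, and \eqref{eq:abstractdimenidentity}, and then use the containments supplied by the complex property to upgrade the dimension count to equality of subspaces. Your explicit remark that both sides of the identity are nonnegative is a point the paper leaves implicit, but the argument is the same.
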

\begin{proof}
Given the identity~\eqref{eq:abstractdimenidentity} and the relation $\mathcal V_0=\mathcal V_1\cap\ker(\dd_1)$, we prove the equivalence of $\dd_1\mathcal V_1 = \mathcal V_2\cap \ker(\dd_2)$ and $\dd_2 \mathcal V_2 = \mathcal V_3$ by dimension count. By $\mathcal V_0=\mathcal V_1\cap\ker(\dd_1)$,
$$
\dim \dd_1\mathcal V_1=\dim\mathcal V_1 -\dim(\mathcal V_1\cap\ker(\dd_1))=\dim\mathcal V_1 -\dim\mathcal V_0.
$$
Then it follows from~\eqref{eq:abstractdimenidentity} that
\begin{align*}
\dim(\mathcal V_2\cap\ker(\dd_2))-\dim \dd_1\mathcal V_1 &=\dim\mathcal V_2-\dim\dd_2\mathcal V_2 - \dim\mathcal V_1+\dim\mathcal V_0 \\
&=\dim\mathcal V_3-\dim\dd_2\mathcal V_2,
\end{align*}
as required.
\end{proof}

\subsection{The de Rham complex}
For a domain $\Omega \subseteq \mathbb R^2$, the de Rham complex is
\begin{equation}\label{eq:deRham}
\mathbb R\xrightarrow{\subset} H^{1}( \Omega) \xrightarrow{\curl} \boldsymbol{H}(\operatorname{div}, \Omega) \xrightarrow{\operatorname{div}} L^{2}(\Omega) \rightarrow 0.
\end{equation}
When $\Omega$ is simply connected, the de Rham complex~\eqref{eq:deRham} is exact. By changing smoothness of the Sobolev spaces, we obtain the version \eqref{eq:generalcase}. 

Restricted to one triangle, a polynomial de Rham complex is, for integer $k\geq 1$,
\begin{equation}\label{eq:polyderham}
\mathbb R\xrightarrow{\subset} \mathbb P_{k+1}(T)\xrightarrow{\curl}\mathbb P_{k}(T;\mathbb R^2) \xrightarrow{\div} \mathbb P_{k-1}(T)\xrightarrow{}0,   
\end{equation}
where $\mathbb P_k(T)$  denotes the set of real valued polynomials defined on $T$ of degree less than or equal to $k$, and $\mathbb P_{k}(T;\mathbb X):=\mathbb P_{k}(T)\otimes\mathbb X$ for $\mathbb X$ being vector space $\mathbb R^2$, tensor space $\mathbb M$, or symmetric tensor space $\mathbb S$.

The following identity
\begin{equation}\label{eq:dimensionpolyderham}
1 - {k+3 \choose 2} + 2{k+2 \choose 2} - {k+1 \choose 2} = 0
\end{equation}
can be verified directly. The relation $\curl \mathbb P_{k+1}(T) = \mathbb P_{k}(T;\mathbb R^2)\cap \ker (\div)$ is due to the fact: if $\grad p\in \mathbb P_k(T;\mathbb R^2)$, then $p\in \mathbb P_{k+1}(T)$, and in two dimensions $\curl$ is a rotation of $\grad$. Therefore complex \eqref{eq:polyderham} is exact by Lemma \ref{lm:abstract}.

\subsection{Bernstein-Gelfand-Gelfand construction}
Eastwood's work~\cite{Eastwood2000} established the relationship between the elasticity complex and the de Rham complex via the Bernstein-Gelfand-Gelfand (BGG) construction~\cite{BernsteinGelfandGelfand1975}. Arnold, Falk, and Winther~\cite{ArnoldFalkWinther2006b} expanded upon this connection by replicating the same construction in the discrete setting, which they used to reconstruct the finite element elasticity complex from the finite element de Rham complexes, as previously introduced in~\cite{Arnold;Winther:2002finite}. While a systematic BGG construction has been developed more recently in~\cite{Arnold;Hu:2020Complexes}, our focus in this work is limited to two-dimensional complexes, so we will rely on specific examples rather than the abstract framework in~\cite{Arnold;Hu:2020Complexes}.

We stack two de Rham complexes into the BGG diagram
\begin{equation}\label{eq:BGGelasticitysobolev}
\begin{tikzcd}
\mathbb R \arrow{r}{\subset}
&
H^{2}(\Omega)
\arrow{r}{\curl}
 &
\bs H^1 (\Omega; \mathbb R^2)
   \arrow{r}{\div}
 &
L^2(\Omega)
 \arrow{r}{}
 & 0 \\
 \mathbb R^2  \arrow[ur,swap, near end, "{\rm \cdot(-\bs x)^{\perp}}"] \arrow{r}{\subset}
&
\bs H^1 (\Omega; \mathbb R^2)
 \arrow[ur,swap,"{\rm id}"] \arrow{r}{\curl}
 & 
\bs H(\div,\Omega;\mathbb M) 
 \arrow[ur,swap,"{\rm -2\,sskw}"] \arrow{r}{\div}
 & 
\bs L^2(\Omega;\mathbb R^2)
\arrow[r] 
 &\bs 0 
\end{tikzcd},
\end{equation}
which leads to the elasticity complex
\begin{equation}\label{eq:elascomplex}
\mathbb P_1\xrightarrow{\subset} H^{2}\left(\Omega\right) \xrightarrow{\air} \boldsymbol{H}(\operatorname{div}, \Omega ; \mathbb{S}) \xrightarrow{\operatorname{div}} \boldsymbol{L}^2(\Omega;\mathbb R^2) \rightarrow \boldsymbol{0}.
\end{equation}
By rotation, we also have the Hessian complex
\begin{equation*}%\label{eq:hesscomplex}
\mathbb P_1\xrightarrow{\subset} H^{2}\left(\Omega\right) \xrightarrow{{\rm hess}} \boldsymbol{H}(\operatorname{rot}, \Omega ; \mathbb{S}) \xrightarrow{\operatorname{rot}} \boldsymbol{L}^2(\Omega;\mathbb R^2) \rightarrow \boldsymbol{0}.
\end{equation*}

To provide a more effective explanation of how \eqref{eq:elascomplex} is derived from \eqref{eq:BGGelasticitysobolev}, we present a step-by-step breakdown of the process. 
The anti-commutativity $\div\boldsymbol{v}=2\,\sskw(\curl \boldsymbol{v})$ is exactly the first identity in~\eqref{eq:anticommutativeprop1}, by which we can change $\boldsymbol{H}(\operatorname{div}, \Omega ; \mathbb{M})$ to $\boldsymbol{H}(\operatorname{div}, \Omega ; \mathbb{S})$ as follows. For $\bs u\in\boldsymbol{L}^2(\Omega;\mathbb R^2)$, by the exactness of the bottom complex in \eqref{eq:BGGelasticitysobolev}, there exists $\bs\tau\in \bs H(\div,\Omega;\mathbb M)$ satisfying $\bs u=\div\bs\tau$. Then apply the top complex in \eqref{eq:BGGelasticitysobolev} to find $\bs v\in\bs H^1 (\Omega; \mathbb R^2)$ satisfying $-2\sskw\bs\tau=\div\bs v$. Set $\bs\sigma=\bs\tau+\curl\bs v\in \bs H(\div,\Omega;\mathbb M)$. Clearly $\div\bs\sigma=\div\bs\tau=\bs u$. By the anti-commutativity, we have $2\,\sskw\bs\sigma=2\,\sskw\bs\tau+2\,\sskw(\curl\bs v)=2\,\sskw\bs\tau+\div\bs v=0$, i.e. $\bs\sigma\in\boldsymbol{H}(\operatorname{div}, \Omega ; \mathbb{S})$. This explains the div stability $\div\boldsymbol{H}(\operatorname{div}, \Omega ; \mathbb{S})=\boldsymbol{L}^2(\Omega;\mathbb R^2)$. 
The relation of these functions is summarized below:
\begin{equation*}
\begin{tikzcd}[column sep=large]
&
\bs v
 \arrow[dl,swap,"{\rm id}"]   \arrow{r}{\div}
 &
-2\sskw(\bs \tau)
 \\
 \bs v
% \arrow[ur,swap,"2\vskw"] 
 \arrow{r}{\curl}
 & 
 \bs \tau
\arrow[ur,swap, near end, "{\rm -2\,sskw}"] \arrow{r}{\div}
 & 
\bs u
\end{tikzcd}.
\end{equation*}
The composition of two $\curl$ operators leads to $H^{2}\left(\Omega\right) \xrightarrow{\air} \boldsymbol{H}(\operatorname{div}, \Omega ; \mathbb{S})$. The null space $\ker(\air)$ consists of $\mathbb R+\mathbb R^2\cdot\bs x^{\perp}=\mathbb P_1$.

The BGG diagram 
\begin{equation}\label{eq:BGGdivdivsobolev}
\begin{tikzcd}
\mathbb R^2 \arrow{r}{\subset}
&
\bs H^1 (\Omega; \mathbb R^2) \arrow{r}{\curl}
& 
\boldsymbol{H}(\operatorname{divdiv}, \Omega ; \mathbb{M}) 
 \arrow{r}{\div}
& 
\bs H^{-1,0}(\div,\Omega)
\arrow[r] 
&\bs 0 
\\
 \mathbb R \arrow[ur,swap,"{\rm -\bs x}"] \arrow{r}{\subset}
&
L^{2}(\Omega)
\arrow[ur,swap,"{\rm mskw}"] \arrow{r}{\curl}
& 
\bs H^{-1,0}(\div,\Omega)
\arrow[ur,swap,"{\rm id}"] \arrow{r}{\div}
& 
L^{2}(\Omega)
\arrow[r] 
&0 
\end{tikzcd}
\end{equation}
will lead to the divdiv complex
\begin{equation*}%\label{eq:divdivcomplex}
{\bf RT}\xrightarrow{\subset} \boldsymbol{H}^{1}\left(\Omega ; \mathbb{R}^{2}\right) \xrightarrow{\sym\curl} \boldsymbol{H}(\operatorname{divdiv}, \Omega ; \mathbb{S}) \xrightarrow{\operatorname{divdiv}} L^2(\Omega) \rightarrow 0,
\end{equation*}
and, again by rotation, the strain complex
\begin{equation*}%\label{eq:rotrotcomplex}
{\bf RM}\xrightarrow{\subset} \boldsymbol{H}^{1}\left(\Omega ; \mathbb{R}^{2}\right) \xrightarrow{\sym\grad} \boldsymbol{H}(\operatorname{rotrot}, \Omega ; \mathbb{S}) \xrightarrow{\operatorname{rotrot}} L^2(\Omega) \rightarrow 0,
\end{equation*}
where ${\bf RT}:=\mathbb R^2+\bs x\mathbb R$ and ${\bf RM}:=\mathbb R^2+\bs x^{\perp}\mathbb R$.

The anti-commutativities in \eqref{eq:BGGdivdivsobolev} are $\curl(c\,\bs x)=\mskw c$ for $c\in\mathbb R$ and  $\div(\mskw v)=-\curl v$ for $v\in L^2(\Omega)$.
For $p \in L^2(\Omega)$, by the exactness of the bottom complex in \eqref{eq:BGGdivdivsobolev}, there exists $\bs u\in \bs H^{-1,0}(\div,\Omega)$ satisfying $p=\div\bs u$. Then apply the top complex in \eqref{eq:BGGdivdivsobolev} to find $\bs\tau\in\boldsymbol{H}(\operatorname{divdiv}, \Omega ; \mathbb{M})$ s.t. $\bs u=\div\bs\tau$. Set $\bs\sigma=\bs\tau+\mskw w\in \bs H(\div\div,\Omega;\mathbb M)$ with $w=-\sskw\bs\tau\in L^2(\Omega)$. 
By the anti-community, $\div\div(\mskw w)=-\div(\curl w)=0$.
Hence $\div\div\bs\sigma=\div\div\bs\tau=p$, and $\sskw\bs\sigma=\sskw\bs\tau+\sskw(\mskw w)=\sskw\bs\tau+w=0$, i.e. $\bs\sigma\in\boldsymbol{H}(\div\div, \Omega ; \mathbb{S})$. This explains $\div\div\boldsymbol{H}(\div\div, \Omega ; \mathbb{S})=L^2(\Omega)$.
The chase of the diagram is summarized below:
\begin{equation*}
\begin{tikzcd}[column sep=large]
&
\bs \tau
 \arrow[dl,swap,shift right=0.6ex,"{\rm -\sskw}"]   \arrow{r}{\div}
 &
\bs u
 \\
w
 \arrow[ur,swap,shift right=0.1ex, near end,"\mskw"] 
 \arrow{r}{\curl}
 & 
\bs u
\arrow[ur,swap, near end, "{\rm id}"] \arrow{r}{\div}
 & 
p
\end{tikzcd}.
\end{equation*}
The null space $\ker(\sym\curl)$ is given by $\mathbb R^2+\bs x\mathbb R={\bf RT}$.

We shall construct finite element counterparts of the BGG diagrams~\eqref{eq:BGGelasticitysobolev}-\eqref{eq:BGGdivdivsobolev}, and derive several finite element elasticity and divdiv complexes.
The first step is to design finite element de Rham complexes of different smoothness.

\section{Smooth Finite Elements in Two Dimensions}\label{sec:geodecomp2d}
In this section, we shall construct $C^m$-continuous finite elements on two-dimensional triangular grids, firstly constructed by Bramble and Zl\'amal~\cite{BrambleZlamal1970},  by a decomposition of the simplicial lattice. 
%We start from a Hermite finite element space which ensures the tangential derivatives across edges are continuous. By adding degrees of freedom on the normal derivative, we can impose the continuity of derivatives across triangles. 
%Note that the smoothness at vertices is at least $C^{2m}$ which is sufficient but may not be necessary.

We use a pair of integers $\bs r = (r^{\texttt{v}}, r^{e})$ for the smoothness at vertices and at edges, respectively. Value $-1$ means no continuity. To be $C^m$-continuous, $r^{e} = m$ is the minimum requirement for edges and $r^{\texttt{v}}\geq \max\{2r^{e},-1\}$  for vertices. The polynomial degree $k\geq \max\{2r^{\texttt{v}}+1,0\}$. For a vector $\bs r\in \mathbb R^d$ and a constant $c$, $\bs r \geq c$ means $r_i \geq c$ for all components $i=1, 2, \ldots, d$, and $\bs r+c:=(r_1+c, r_2+c, \ldots, r_d+c)$. Define $\boldsymbol r\ominus 1: = \max \{\boldsymbol r-1, -1\}$

\subsection{Simplicial lattice}
%We first introduce the simplicial lattice for 2D triangle. Generalization to arbitrary dimension can be found in our recent work~\cite{ChenHuang2021Cmgeodecomp}.

For two non-negative integers $l\leq m$, we will use the multi-index notation $\alpha \in \mathbb{N}^{l:m}$, meaning $\alpha=\left(\alpha_{l}, \ldots, \alpha_{m}\right)$ with integer $\alpha_{i} \geqslant 0$. The length of $\alpha$ is $m-l+1$. The sum (absolute value) of a multi-index is $|\alpha|:=\sum_{i=l}^m \alpha_{i}$ for $\alpha\in \mathbb{N}^{l:m}$ and the factorial is $\alpha! := \alpha_l!\cdots\alpha_m!$. Denote
$$
D^{\beta}:=\frac{\partial^{|\beta|}}
{\partial x_1^{\beta_1} x_2^{\beta_2}}, \quad \beta \in \mathbb{N}^{1:2}.
%    \quad \mbox{with } |\alpha|=\sum_{i=1}^n\alpha_i.
$$

A simplicial lattice of degree $k\geq 1$ in two dimensions is a multi-index set of length $3$ with fixed sum $k$, i.e.,
$$
\mathbb T^{2}_k := \left \{ \alpha = (\alpha_0, \alpha_1, \alpha_2)\in\mathbb N^{0:2} \mid |\alpha | = k \right \}.
$$
%Similarly define $\mathbb T^{1:n}_k =\left \{ \alpha = (\alpha_1, \alpha_2)\in\mathbb N^{1:2} \mid \alpha_1 + \ldots + \alpha_n = k \right \}$.
An element $\alpha\in \mathbb T^{2}_k$ is called a node of the lattice. 
%

%Two nodes $\alpha, \beta\in \mathbb T^{2}_k$ are adjacent if there exist $0\leq i_1<i_2\leq n$ such that $|\alpha_{i_1} - \beta_{i_1}|=|\alpha_{i_2} - \beta_{i_2}|=1$ and $|\alpha_i - \beta_i|=0$ for $i\neq i_1, i_2$. By assigning edges to all adjacent nodes, the simplicial lattice becomes an undirected graph. The distance of two nodes in the graph is the length of a minimal path connecting them, where the length of a path is defined as the number of edges in the path. One can easily verify that the graph distance $\dist_G(\alpha,\beta)=\frac{1}{2}\sum_{i=0}^n|\alpha_i-\beta_i|$ which is one half of the $L^1$-norm of $\alpha - \beta$ treating $\alpha,\beta \in \mathbb R^{n+1}$.

%\subsection{Geometric embedding of a simplicial lattice}
We can embed the simplicial lattice $\mathbb T^{2}_k$ into a triangle $T$ with vertices $\{\texttt{v}_0, \texttt{v}_1, \texttt{v}_2\}$. Given $\alpha\in \mathbb T^{2}_k$, the barycentric coordinate of $\alpha$ is given by
$\lambda(\alpha) = (\alpha_0, \alpha_1, \alpha_2 )/k$, and the geometric embedding is
$$
x: \mathbb T^{2}_k \to T, \quad x(\alpha) = \sum_{i=0}^2 \lambda_i(\alpha) \texttt{v}_i. 
$$
%We will always assume such a geometric embedding of the simplicial lattice exists and write as $\mathbb T^{2}_k(T)$. 

The left side of Fig.~\ref{fig:lattice} illustrates the embedding of a two-dimensional simplicial lattice $\mathbb T_8^2$ within a reference triangle $\hat{T}$ with vertices ${(0,0), (1,0), (0,1)}$, while the right side shows the embedding of the same lattice into an equilateral triangle.
%See Fig.~\ref{fig:lattice} for an illustration for a two-dimensional simplicial lattice embedded into an equilateral triangle (right) and a reference triangle $\hat T$ with vertices $\{ (0,0), (1,0), (0,1)\}$ (left).

% \LC{Add a reference triangle with lattice nodes on the left.}
\begin{figure}[htbp]
\begin{center}
\includegraphics[width=4.5cm]{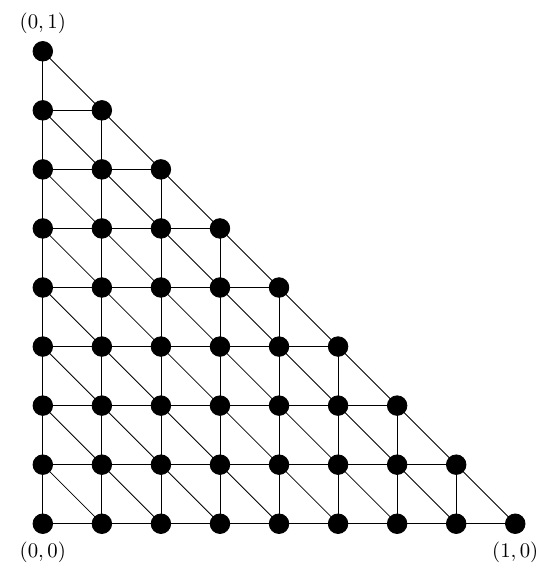}\qquad \quad
\includegraphics[width=4.5cm]{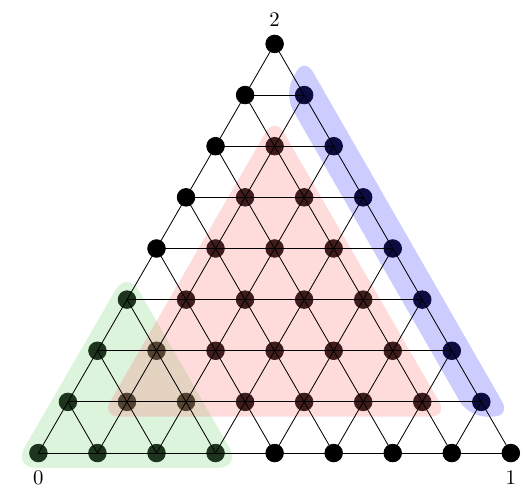}
\caption{Two embedding of the simplicial lattice $\mathbb T_8^2$ in two dimensions.}
\label{fig:lattice}
\end{center}
\end{figure}

A simplicial lattice $\mathbb T^{2}_k$ is, by definition, an algebraic set. Through the geometric embedding $\mathbb T^{2}_k(T)$, we can apply operators for the geometric simplex $T$. For example, for a subset $S\subseteq T$, we use $\mathbb T^{2}_k(S) = \{ \alpha \in \mathbb T^{2}_k, x(\alpha)\in S\}$ to denote the portion of lattice nodes whose geometric embedding is inside $S$. 

%The so-called reference simplex $\hat T$ is spanned by vertices $\texttt{v}_0 = \bs 0$ and $\texttt{v}_i = \bs e_i = (0, \ldots, 1, \ldots, 0)$, whose barycentric coordinate $\lambda_i = x_i$ for $i=1,2,\ldots n$ and $\lambda_0 = 1 - \sum_{i=1}^n x_i$. If we embed $\mathbb T^{2}_k$ to the scaled reference simplex $k \hat T$, then the coordinate of $(\alpha_0, \alpha_1, \alpha_2 )\in \mathbb T^{2}_k(k \hat T)$ is simply $(\alpha_1, \alpha_2 )$, where $\alpha_0$ is dropped as the vertex $\texttt{v}_0$ is mapped to the origin. Of course, we can set other vertex as the origin and obtain other embeddings.
%By such embedding, we can extend the length operator for multi-index to barycentric coordinate:
%$$
%| \lambda | = \lambda_0 + \lambda_1 + \ldots + \lambda_n
%$$

\subsection{Bernstein basis}
It holds that
$$| \mathbb T^{2}_k | = {k + 2\choose k} = \dim \mathbb P_k(T).$$
Let $T$ be a triangle with vertices $\{\texttt{v}_0, \texttt{v}_1, \texttt{v}_2\}$ and $\lambda_i, i=0,1,2,$ be the barycentric coordinate. 
%We define $\lambda^{\alpha}:=\lambda_{0}^{\alpha_{0}} \lambda_{1}^{\alpha_{1}} \lambda_{2}^{\alpha_{2}}$ for $\alpha\in \mathbb{N}^{0: 2}$.
The Bernstein basis of $\mathbb P_k(T)$ is
$$
\{ \lambda^{\alpha}: = \lambda_0^{\alpha_0}\lambda_1^{\alpha_1}\lambda_2^{\alpha_2} \mid \alpha \in \mathbb T^{2}_k\}.
$$
For a subset $S\subseteq \mathbb T^{2}_k$, we define
$$
\mathbb P_k(S) := \spa \{ \lambda^{\alpha}, \alpha \in S\subseteq \mathbb T^{2}_k \}.
$$
By establishing a one-to-one mapping between the lattice node $\alpha$ and the corresponding Bernstein polynomial $\lambda^{\alpha}$, we can analyze polynomial properties through the simplicial lattice. In fact, all lattice nodes serve as interpolation nodes for the $k$-th order Lagrange element.

%The superscript ${}^n$ will be replaced by the dimension of $S$ when $S$ is a lower dimensional simplex introduced in a moment. 
%In particular,  $\mathbb T^{2}_k(\stackrel{\circ}{T})$

\subsection{Sub-simplicial lattices and distance}
We adopt the notation of~\cite{ArnoldFalkWinther2009} and define $\Delta(T)$ as the set of all sub-simplices of $T$, and $\Delta_{\ell}(T)$ as the set of all sub-simplices of dimension $\ell$, where $0\leq \ell \leq 2$. A sub-simplex $f\in \Delta_{\ell}(T)$ is determined by choosing $\ell+1$ vertices  from the $3$ vertices of $T$.  We will overload the notation $f$ for both the geometric simplex and the algebraic set of indices. As an algebraic set, $f = \{f(0), \ldots, f(\ell)\}\subseteq \{0, 1, 2\}$ is a subset of indices, and also
$$
f ={\rm Convex}(\texttt{v}_{f(0)}, \ldots, \texttt{v}_{f(\ell)}) \in \Delta_{\ell}(T)
$$
is the $\ell$-dimensional simplex spanned by the vertices $\texttt{v}_{f(0)}, \ldots, \texttt{v}_{f( \ell)}$. We also use notation $e_{ij}$ for the edge formed by vertices $\texttt{v}_i$ and $\texttt{v}_j$ for $i,j=0,1,2,i\neq j$.

For $\ell =0,1$ and $f \in \Delta_{\ell}(T)$, we let $f^* \in \Delta_{2- \ell-1}(T)$ denote the sub-simplex of $T$ opposite to $f$. When treating $f$ as a subset of $\{0, 1, 2\}$, $f^*\subset \{0,1, 2\}$ so that $f\cup f^*= \{0, 1, 2\}$, i.e., $f^*$ is the complementary set of $f$.  Geometrically,
$$
f^* ={\rm Convex}(\texttt{v}_{f^*(1)}, \ldots, \texttt{v}_{f^*(2-\ell)}) \in \Delta_{1- \ell}(T)
$$ represents the $(1- \ell)$-dimensional simplex spanned by vertices not contained in $f$. When $f$ is a vertex $\texttt{v}_i$, we simply write $f^*$ as $i^*$. Note that $f$ can be identified as the zero level set of the barycentric coordinate associated with the index set $f^*$, i.e., $f=\{x\in T\mid \lambda_i(x) = 0, \text{ for all } i\in f^*\}$.
%
%We use capital $F$ for an $(n-1)$-dimensional face of $T$ and label $F_{i}$ as the $(n-1)$-dimensional face opposite to $\texttt{v}_i$, i.e., $F_i = \{ i \}^*$ as the set and as a simplex $F_i = {\rm Convex}(\texttt{v}_{0}, \ldots, \widehat{\texttt{v}}_i, \ldots, \texttt{v}_{n})$ where $\widehat{\texttt{v}}_i$ means $\texttt{v}_i$ is removed. The sub-simplicial lattice $\mathbb T_k^{n-1}(F_0) =\{ \alpha\in \mathbb N^{1:2}, |\alpha | = k\}$ can be related to derivative $D^{\alpha}u, \alpha\in \mathbb N^{1:2}, |\alpha | = k$. 
%
%
%If $f=f_{\sigma}$ with $\sigma \in \Sigma(0:  \ell, 0: n)$, then $f^{*}=f_{\sigma^{*}}$. Again $f^*$ can be refer to $\sigma^*$.
%

Given a sub-simplex $f\in \Delta_{\ell}(T)$, through the geometric embedding $f \hookrightarrow T$, we define the prolongation/extension operator $E_f: \mathbb T^{\ell}_k(f) \to \mathbb T^{2}_k(T)$ as follows:
$$
E_f(\alpha)_{f(i)} = \alpha_{i}, i=0,\ldots, \ell, \quad \text{ and } E_f(\alpha)_j = 0, j\not\in f.
$$
For example, for 
$\alpha = ( \alpha_{0},\alpha_{1})\in \mathbb T^{1}_k(f)$, when $f = \{ 1, 2\}$, the extension $ E_f(\alpha)= (0, \alpha_{0}, \alpha_{1}) \in \mathbb T^{2}_k(T)$, and when $f = \{ 0, 2\}$, the extension $ E_f(\alpha)= (\alpha_{0}, 0, \alpha_{1}) \in \mathbb T^{2}_k(T)$. The geometric embedding $x(E_f(\alpha))\in f$ justifies the notation $\mathbb T^{\ell}_k(f)$.
With a slight abuse of notation, for a node $\alpha_f\in \mathbb T^{\ell}_k(f)$, we still use the same notation $\alpha_f\in \mathbb T^{2}_k(T)$ to denote $E_f(\alpha_f)$. Then we have the following direct decomposition
\begin{equation}\label{eq:decalpha}
\alpha = E_f(\alpha_f) + E_{f^*}(\alpha_{f^*}) = \alpha_f + \alpha_{f^*}, \text{ and } |\alpha | = |\alpha_f | + | \alpha_{f^*}|.
\end{equation}

Based on~\eqref{eq:decalpha}, we can write a Bernstein polynomial as
\begin{equation*}
\lambda^{\alpha} = \lambda_{f}^{\alpha_f}\lambda_{f^*}^{\alpha_{f^*}},
\end{equation*}
where $\lambda_{f}=\lambda_{f(0)} \ldots \lambda_{f(\ell)}\in \mathbb P_{\ell +1}(f)$ is the bubble function on $f$ and also denoted by $b_f$.

%Define $\mathbb T^{\ell}_{k, c}(f) := \{ \alpha_{f} \in \mathbb T^{\ell}_k(f),  \alpha_{f}\geq c \}$. Then it is easy to see $\mathbb T^{\ell}_{k, 1}(f) = \mathbb T^{\ell}_{k}(\stackrel{\circ}{f})$, where the latter notation indicates the lattices nodes are contained in the interior of $f$; see the red triangle in Fig.~\ref{fig:lattice}. The interior lattice $\mathbb T^{\ell}_{k}(\stackrel{\circ}{f})$
%%\mnote{ Using the same kind of notation, $\mathbb T^{\ell}_{k}(\stackrel{\circ}{f})$ and $\mathbb T^{\ell}_{k-(\ell +1)}(f)$ are confused. For example, $\mathbb T^{\ell}_{k}(\stackrel{\circ}{f})\approx \mathbb T^{\ell}_{k-(\ell +1)}(f)\approx \mathbb T^{\ell}_{k-2(\ell +1)}(\stackrel{\circ}{f})\approx \mathbb T^{\ell}_{k-3(\ell +1)}(\stackrel{\circ}{f})$} 
%is isomorphic to a simplicial lattice with a smaller degree $k-(\ell +1)$, denoted by $\mathbb T^{\ell}_{k-(\ell +1)}(f)$. The one-to-one mapping is 
%%
%%xxxx
%%
%%We introduce a smaller simplicial lattice associated to $\stackrel{\circ}{f}$ as 
%%
%%The nodes contained in the interior $\stackrel{\circ}{f}$ of $f$ form a simplicial lattice and denoted by $\mathbb T^{\ell}_{k-(\ell +1)}(f)$. It can be embedded into  $\mathbb T^{\ell}_k(f)$ as follows. 
%%
%%
%%Then
%$$
%\mathbb T^{\ell}_{k-(\ell +1)}(f) \to \mathbb T^{\ell}_{k, 1}(f): \alpha_{f} \to \alpha_{f} + 1.
%$$
%Denote by $b_f := \lambda_f$. 
The bubble polynomial of $f$ is 
$$
b_f \mathbb P_{k-(\ell +1)}(f) := \spa\{ b_f\lambda_{f}^{\alpha_{f}}: \alpha_{f} \in  \mathbb T^{\ell}_{k-(\ell +1)}(f)\}.
$$
Geometrically as the bubble polynomial space vanished on the boundary, it is generated by the interior lattice nodes only. In Fig.~\ref{fig:lattice}, $\mathbb T^{2}_{k}(\stackrel{\circ}{T})$ consists of the nodes inside the red triangle, and $\mathbb T^{1}_{k}(\stackrel{\circ}{f})$ for $f = \{1,2\}$ is in the blue trapezoid region.

%In summary, by treating $f$ as a set of indices, we can apply the operators $\cup, \cap, {}^*, \backslash$ on sets. While treating $f$ as a geometric simplex, $\partial f, \stackrel{\circ}{f}$ etc can be applied.
%
%
%
% the contained nodes in $\mathbb T^{2}_k(T)$ can be written as  By removing all zeros, we can also treat $\alpha_{\sigma} \in  \mathbb N^{0:\ell}_k(f_{\sigma})$.

%The interior lattice of $f$ is \mnote{ Should $\mathcal G_{k-(\ell +1)}(\stackrel{\circ}{f})$ be $\mathcal G_{k-(\ell +1)}(f)$?}
%$$
%\mathcal G_r(\stackrel{\circ}{f}) = \{ \alpha_{\sigma} \in  \mathbb N^{0:\ell}_k: \alpha_{\sigma}\geq 1\} \cong \mathcal G_{k-(\ell +1)}(\stackrel{\circ}{f}) = ,
%$$
%and the boundary lattice is
%$$
%\mathcal G_r(\partial f) = \{ \alpha_{\sigma} \in  \mathbb N^{0:\ell}_k: \textrm{ there exists } 0\leq i\leq \ell \textrm{ such that } \alpha_{\sigma(i)} = 0\}.
%$$

%\subsection{Distance to a sub-simplex}
\subsection{Derivative and distance}
Given $f\in \Delta_{\ell}(T)$, we define the distance of a node $\alpha\in \mathbb T_k^2$ to $f$ as
\begin{equation*}%\label{eq:dist}
\dist(\alpha, f) :=| \alpha_{f^*} | = \sum_{i\in f^*} \alpha_i.
\end{equation*}

We define the lattice tube of $f$ with radius $r$ as
$$
D(f, r) := \{ \alpha \in \mathbb T^{2}_k, \dist(\alpha,f) \leq r\},
$$
which contains lattice nodes at most $r$ distance away from $f$. Define
$$
L(f,s) := \{ \alpha \in \mathbb T^{2}_k, \dist(\alpha,f) = s\}.
$$
%which is defined as a line before but here is a subset of lattice nodes on this line. 
Then by definition, 
$$
D(f, r) = \cup_{s=0}^r L(f,s), \quad L(f,s) = L(f^*, k - s). 
$$
We have the following characterization of lattice nodes in $D(f, r)$.
\begin{lemma} \label{lm:dist}
%It holds that
%the nodes with equality $ \{ \alpha \in \mathbb T^{2}_k, |\alpha_{f^*}| = s \}$ is on a line with distance $k$ to $f_{\sigma}$.
%Consequently
For lattice node $\alpha \in \mathbb T^{2}_k$,
\begin{align*}
%\label{eq:Dfr} 
\alpha \in D(f, r)  &\iff  |\alpha_{f^*}| \leq r \iff | \alpha_{f} | \geq k - r,\\
%\label{eq:notDfr} 
\alpha \notin D(f, r) & \iff  |\alpha_{f^*}| > r \iff | \alpha_{f} | \leq k - r - 1.
\end{align*}
\end{lemma}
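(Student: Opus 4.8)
The plan is to derive all four equivalences from two elementary facts already available: the definition $\dist(\alpha, f) := |\alpha_{f^*}|$ and the length decomposition from~\eqref{eq:decalpha}, which for a node $\alpha \in \mathbb T^{2}_k$ reads $|\alpha| = |\alpha_f| + |\alpha_{f^*}| = k$. No new machinery is needed; the entire argument is an unwinding of definitions combined with this single identity.

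First I would record the leftmost equivalence of the first line. By the definition $D(f,r) = \{\alpha \in \mathbb T^{2}_k : \dist(\alpha,f) \leq r\}$ together with $\dist(\alpha,f) = |\alpha_{f^*}|$, the membership $\alpha \in D(f,r)$ is \emph{by definition} the same as $|\alpha_{f^*}| \leq r$, so nothing is to prove there. Next I would convert the condition on $|\alpha_{f^*}|$ into the stated condition on $|\alpha_f|$: since $|\alpha_{f^*}| = k - |\alpha_f|$ by the length identity, the inequality $|\alpha_{f^*}| \leq r$ is equivalent to $k - |\alpha_f| \leq r$, that is, $|\alpha_f| \geq k - r$. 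This completes the first line.

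Finally, the second line is simply the logical negation of the first, read over the integers. Negating the membership gives $\alpha \notin D(f,r) \iff |\alpha_{f^*}| > r$, and substituting $|\alpha_{f^*}| = k - |\alpha_f|$ turns $|\alpha_{f^*}| > r$ into $|\alpha_f| < k - r$. The only point requiring any care is replacing the strict inequalities by the non-strict forms in the statement: because $|\alpha_{f^*}|$, $|\alpha_f|$, $k$, and $r$ are all integers, $|\alpha_{f^*}| > r$ is the same as $|\alpha_{f^*}| \geq r+1$, equivalently $|\alpha_f| \leq k - r - 1$. This integrality step is the entirety of the ``obstacle,'' and it is routine; there is no substantive difficulty in the lemma.
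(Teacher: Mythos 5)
Your proposal is correct and follows exactly the paper's argument, which consists of the same two ingredients: the definition $\dist(\alpha,f)=|\alpha_{f^*}|$ and the identity $|\alpha_f|+|\alpha_{f^*}|=k$ from~\eqref{eq:decalpha}. The paper's proof is a one-line version of yours; your explicit treatment of the integrality step when passing from $|\alpha_{f^*}|>r$ to $|\alpha_f|\leq k-r-1$ is a harmless elaboration of the same route.
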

\begin{proof}
%In particular $f_{\sigma}$ is characterized by $\{ \lambda_{\sigma^*(1)} = \lambda_{\sigma^*(2)} =\ldots \lambda_{\sigma^*(n-\ell)}= 0 \}$.
By definition of $\dist(\alpha, f)$ and the fact $| \alpha_{f} |  + | \alpha_{f^*} | = k$.
\end{proof}

For each vertex $\texttt{v}_i\in \Delta_0(T)$ and an integer $0\leq r\leq k$, the tube
$$
D(\texttt{v}_i, r) =  \{ \alpha \in \mathbb T^{2}_k(T), |\alpha_{i^*}| \leq r \},
$$
 is isomorphic to a simplicial lattice $\mathbb T_{r}^2$ of degree $r$. In Fig.~\ref{fig:lattice}, $D(\texttt{v}_0, 3)$ consists of lattice nodes in the green triangle which itself can be treated as a smaller simplicial lattice $\mathbb T_{3}^2$. For an edge $e\in \Delta_{1}(T)$, $D(e,r)$ is a trapezoid of height $r$ with base $e$. 
%In general for $f\in \Delta_{\ell}(T)$, the hyper plane  $L(f, r)$ will cut the simplex $T$ into two parts, and $D(f,r)$ is the part containing $f$.

Recall that in~\cite{ArnoldFalkWinther2009} a smooth function $u$ is said to vanish to order $r$ on $f$ if $D^{\beta} u|_f = 0$ for all $\beta \in \mathbb N^{1:2}$ satisfying $|\beta | < r$. The following result shows that the vanishing order $r$ of a Bernstein polynomial $\lambda^{\alpha}$ on a sub-simplex $f$ is the distance $\dist(\alpha, f)$.
%The distance of a node $\alpha$ to a sub-simplex $f$ can be used to control the derivative of the corresponding Bernstein polynomial.
\begin{lemma}\label{lm:derivative}
Let $f\in \Delta_{\ell}(T)$ be a sub-simplex of $T$. For $\alpha\in \mathbb T^{2}_k, \beta \in \mathbb N^{1:2}$, and $|\alpha_{f^*}| > |\beta|$, i.e., $\dist(\alpha, f) > |\beta|$, then
\begin{equation*}%\label{eq:distorder}
D^{\beta} \lambda^{\alpha}|_{f} = 0, \quad \text{ when }\dist(\alpha, f) > |\beta|.
\end{equation*}
\end{lemma}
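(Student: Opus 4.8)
The plan is to exploit the factorization of the Bernstein monomial along $f$ and $f^*$ together with the Leibniz rule. First I would use the direct decomposition~\eqref{eq:decalpha} to write
$$
\lambda^{\alpha} = \lambda_f^{\alpha_f}\,\lambda_{f^*}^{\alpha_{f^*}}, \qquad \lambda_{f^*}^{\alpha_{f^*}} = \prod_{i\in f^*}\lambda_i^{\alpha_i},
$$
isolating the factor $\lambda_{f^*}^{\alpha_{f^*}}$ that carries all the vanishing. The first factor $\lambda_f^{\alpha_f}$ involves only the smooth barycentric coordinates $\lambda_i$ with $i\in f$ and plays no essential role beyond being smooth.

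The key step is to show that $\lambda_{f^*}^{\alpha_{f^*}}$ vanishes to order $|\alpha_{f^*}|$ on $f$, i.e. $D^{\gamma}(\lambda_{f^*}^{\alpha_{f^*}})|_f = 0$ whenever $|\gamma| < |\alpha_{f^*}|$. Recall that $\lambda_i|_f = 0$ for every $i\in f^*$ and that $f = \{x\in T : \lambda_i(x)=0,\ i\in f^*\}$. Since $|f^*| = 2-\ell$, the functions $\{\lambda_i : i\in f^*\}$ can be completed to an affine coordinate system $(y,z)$ on $\mathbb R^2$, with $y=(\lambda_i)_{i\in f^*}$, so that $f\subseteq\{y=0\}$. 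In these coordinates $\lambda_{f^*}^{\alpha_{f^*}}$ is a monomial in $y$ of total $y$-degree $|\alpha_{f^*}|$, and a derivative of total order $<|\alpha_{f^*}|$ cannot reduce every exponent to zero; hence at least one factor $y_i$ survives and the result vanishes on $\{y=0\}\supseteq f$. Because the change of variables is affine, $D^{\gamma}$ in the $x$-coordinates is a constant-coefficient combination of derivatives of the same total order in the $(y,z)$-coordinates, so the vanishing transfers back.

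Finally I would apply the Leibniz rule,
$$
D^{\beta}\lambda^{\alpha} = \sum_{\gamma\le\beta}\binom{\beta}{\gamma}\, D^{\gamma}(\lambda_f^{\alpha_f})\, D^{\beta-\gamma}(\lambda_{f^*}^{\alpha_{f^*}}),
$$
and restrict to $f$. Each summand contains the factor $D^{\beta-\gamma}(\lambda_{f^*}^{\alpha_{f^*}})|_f$ with $|\beta-\gamma|\le|\beta|<|\alpha_{f^*}| = \dist(\alpha,f)$ by hypothesis, which vanishes by the previous step. Therefore $D^{\beta}\lambda^{\alpha}|_f = 0$. I expect the only genuine obstacle to be the second step, namely proving the vanishing order cleanly and uniformly across the vertex case ($|f^*|=2$) and the edge case ($|f^*|=1$); the affine-coordinate reduction is precisely what makes the argument uniform, while everything else reduces to the product rule.
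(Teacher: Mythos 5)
Your proof is correct and follows essentially the same route as the paper's: factor $\lambda^{\alpha}=\lambda_f^{\alpha_f}\lambda_{f^*}^{\alpha_{f^*}}$ and observe that after differentiation of total order $|\beta|<|\alpha_{f^*}|$ every term retains a positive power of some $\lambda_i$ with $i\in f^*$, which vanishes on $f$. Your Leibniz expansion and affine-coordinate argument simply make explicit the step the paper states in one line (that $D^{\beta}\lambda^{\alpha}$ contains a factor $\lambda_{f^*}^{\gamma}$ with $|\gamma|=|\alpha_{f^*}|-|\beta|>0$).
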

\begin{proof}
For $\alpha\in \mathbb T^{2}_k$, we write $\lambda^{\alpha} = \lambda_{f}^{\alpha_f}\lambda_{f^*}^{\alpha_{f^*}}$. When $|\alpha_{f^*}| > |\beta|$, the derivative $D^{\beta} \lambda^{\alpha}$ will contain a factor $\lambda_{f^*}^{\gamma}$ with $\gamma\in \mathbb N^{1:(2-\ell)},$ and $|\gamma| = |\alpha_{f^*}| - |\beta| > 0$. Therefore $D^{\beta} \lambda^{\alpha}|_{f} = 0$ as $\lambda_{i}|_f = 0$ for $i\in f^*$.
%Otherwise $|\alpha_{f^*}| \leq |\beta|$, there will be a term in $D^{\beta} \lambda^{\alpha}$ which does not contain $\lambda_{f^*}$
\end{proof}

\subsection{Derivatives at vertices}
Consider a function $u\in C^{m}(\Omega)$. The set of derivatives of order up to $m$ can be written as
$$
\{ D^{\beta} u, \beta \in \mathbb N^{1:2}, |\beta |\leq m\}.
$$
Notice that the multi-index $\beta \in \mathbb N^{1:2}$ is not in $\mathbb N^{0:2}$. We can add a component with value $m - |\beta|$ to form a simplicial lattice $\mathbb T^{2}_m$ of degree $m$, which can be used to determine the derivatives at that vertex.

%On the other hand, given a lattice node $\alpha \in D(\texttt{v}_i,m)$, we can define $\beta = \alpha_{i^*}$

\begin{lemma}\label{lem:vertexunisolvence}
Let $i\in \{0,1,2\}$. The polynomial space
\begin{equation*}%\label{eq:DvPk}
\mathbb P_k(D(\texttt{v}_i, m)): = \spa \left \{ \lambda^{\alpha}, \alpha  \in \mathbb T^{2}_k, \dist(\alpha, \texttt{v}_i) =  |\alpha_{i^*} | \leq m \right \},
\end{equation*}
is uniquely determined by the DoFs
\begin{equation}\label{eq:DvDoF}
\{ D^{\beta} u (\texttt{v}_i), \beta \in \mathbb N^{1:2}, |\beta | \leq  m\}.
\end{equation}
\end{lemma}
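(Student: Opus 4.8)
The plan is to exhibit the square \emph{evaluation matrix} $M_{\beta,\alpha} := D^{\beta}\lambda^{\alpha}(\texttt{v}_i)$, with rows indexed by $\beta\in\mathbb N^{1:2}$, $|\beta|\le m$, and columns by the Bernstein basis $\{\lambda^{\alpha}:\alpha\in\mathbb T^{2}_k,\ |\alpha_{i^*}|\le m\}$ of $\mathbb P_k(D(\texttt{v}_i,m))$, and to prove that $M$ is nonsingular. First I would check $M$ is square. The number of DoFs in~\eqref{eq:DvDoF} is $\#\{\beta\in\mathbb N^{1:2}:|\beta|\le m\}=\sum_{j=0}^{m}(j+1)=\binom{m+2}{2}$, while $\dim\mathbb P_k(D(\texttt{v}_i,m))=|D(\texttt{v}_i,m)|=|\mathbb T^{2}_m|=\binom{m+2}{2}$, using that the Bernstein monomials are linearly independent and the isomorphism $D(\texttt{v}_i,m)\cong\mathbb T^{2}_m$ recorded above. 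Unisolvence is then equivalent to invertibility of $M$: if $u=\sum_{\alpha}c_{\alpha}\lambda^{\alpha}$ has all DoFs equal to zero, then $Mc=0$, so $c=0$.

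The main structural input is Lemma~\ref{lm:derivative}. Since $\dist(\alpha,\texttt{v}_i)=|\alpha_{i^*}|$, it gives $M_{\beta,\alpha}=D^{\beta}\lambda^{\alpha}(\texttt{v}_i)=0$ whenever $|\alpha_{i^*}|>|\beta|$. Grouping rows by $s=|\beta|$ and columns by $s'=|\alpha_{i^*}|$ and ordering both by increasing value, this makes $M$ block triangular, the block indexed by $(s,s')$ vanishing for $s'>s$. It therefore suffices to show each diagonal block, where $s'=s$, is invertible.

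For the diagonal blocks I would pass to barycentric coordinates, taking $i=0$ without loss of generality. Here $\lambda_1,\lambda_2$ are affine coordinates vanishing at $\texttt{v}_0$ and $\lambda_0=1-\lambda_1-\lambda_2$, so the chain rule turns the order-$s$ Cartesian derivatives $\{D^{\beta}:|\beta|=s\}$ into the order-$s$ barycentric derivatives $\{\partial_{\lambda_1}^{\gamma_1}\partial_{\lambda_2}^{\gamma_2}:|\gamma|=s\}$ via an invertible linear map (the $s$-th symmetric power of the constant Jacobian of $(\lambda_1,\lambda_2)$ in $x$). Writing $\lambda^{\alpha}=\lambda_1^{\alpha_1}\lambda_2^{\alpha_2}(1-\lambda_1-\lambda_2)^{\alpha_0}$, the lowest-degree term in $(\lambda_1,\lambda_2)$ is exactly $\lambda_1^{\alpha_1}\lambda_2^{\alpha_2}$ of degree $\alpha_1+\alpha_2=s$. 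Using $\partial_{\lambda_1}^{\gamma_1}\partial_{\lambda_2}^{\gamma_2}P(\texttt{v}_0)=\gamma!\,[\text{coeff of }\lambda_1^{\gamma_1}\lambda_2^{\gamma_2}\text{ in }P]$, one gets $\partial_{\lambda_1}^{\gamma_1}\partial_{\lambda_2}^{\gamma_2}\lambda^{\alpha}(\texttt{v}_0)=\alpha_1!\,\alpha_2!\,\delta_{\gamma,(\alpha_1,\alpha_2)}$ for $|\gamma|=s$, a diagonal nonsingular block in the $\lambda$-derivative basis; the invertible change of variables transfers nonsingularity back to $M^{(s,s)}$.

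Combining, $M$ is block triangular with invertible diagonal blocks, hence invertible, which proves unisolvence. I expect the only delicate point to be the diagonal-block computation: one must confirm that, at equal distance $s'=s$, a single leading monomial survives evaluation at the vertex (the lower-distance contributions being already removed by triangularity), and that the Cartesian-to-barycentric change of variables preserves invertibility of this block rather than merely of the full operator. The remaining steps — the dimension count and the triangularity from Lemma~\ref{lm:derivative} — are routine.
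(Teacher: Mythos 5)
Your proposal is correct and follows essentially the same route as the paper: dimension count, block triangularity of the DoF--basis matrix via Lemma~\ref{lm:derivative}, and diagonalization of the diagonal blocks by switching to derivatives dual to the barycentric coordinates (your $\partial_{\lambda_1},\partial_{\lambda_2}$ are exactly the paper's directional derivatives $D_n^{\beta}$ along the edge vectors $l^j$ dual to $\nabla\lambda_j$). The paper phrases the final step as an induction on $|\alpha|$ rather than as an explicit block-matrix inversion, but its own post-proof summary describes the argument in precisely your matrix language, so the two proofs coincide.
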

\begin{proof}
Without loss of generality, consider $\texttt{v}_0$. Define map $\alpha = (\alpha_0, \alpha_1, \alpha_2) \to \beta = (\alpha_1, \alpha_2)$ which induces a one-to-one map from $D(\texttt{v}_0, m) = \left \{ \alpha  \in \mathbb T^{2}_k, \alpha_1 + \alpha_2 \leq m \right \}$ to $\{\beta \in \mathbb N^{1:2}, |\beta | \leq  m\}$. So the dimension of $\mathbb P_k(D(\texttt{v}_i, m))$ matches the number of DoFs~\eqref{eq:DvDoF}. It suffices to show that for $u\in \mathbb P_k(D(\texttt{v}_0, m))$ if DoFs~\eqref{eq:DvDoF} vanish, then $u = 0$.

Recall the multivariate calculus result
\begin{equation}\label{eq:calculus}
D^{\beta}(x_1^{\alpha_1}x_2^{\alpha_2}) = \beta! \delta (\alpha, \beta) \textrm{ for }\alpha, \beta\in \mathbb N^{1:2}, |\alpha |=|\beta |=r\geq 0,
\end{equation}
where $\delta(\alpha,\beta)$ is the Kronecker delta function.
When the triangle $T$ is the reference triangle, $\texttt{v}_0$ is the origin and $\lambda_1 = x_1, \lambda_2 = x_2$. So we conclude that the homogenous polynomial space
$ \spa \left \{ x_1^{\alpha_1}x_2^{\alpha_2}, \alpha  \in \mathbb N^{1:2}, |\alpha |=r \right \}$ is determined by DoFs $\{ D^{\beta} u (\texttt{v}_0), \beta \in \mathbb N^{1:2}, |\beta | = r\}.$ Running $r=0,1,\ldots, m$, we then finish the proof when the triangle is the reference triangle. 

%Obviously the dimensions match. Indeed, a one-to-one mapping is from $\alpha_{i^*}$ to $\beta$. 

%Without loss of generality, we assume $i=0$. 
For a general triangle, instead of changing to the reference triangle, we shall use the barycentric coordinate. 
Clearly $\{\nabla \lambda_1, \nabla \lambda_2\}$ forms a basis of $\mathbb R^2$.
Choose another basis $\{l^1, l^{2}\}$ of $\mathbb R^2$, being dual to $\{\nabla \lambda_1, \nabla \lambda_2\}$, i.e., $\nabla \lambda_{i}\cdot l^j = \delta_{i,j}$ for $i,j=1,2$. Indeed $l^i$ is the edge vector $e_{0i}$ as $\nabla \lambda_i$ is orthogonal to $e_{0i}$ for $i=1,2$. We can express the derivatives in this non-orthogonal basis  and denote by $D^{\beta}_nu:=\frac{\partial^{|\beta|} u}{\partial (l^1)^{\beta_1}\partial(l^2)^{\beta_2}}$ with $\frac{\partial}{\partial l^i}=l^i\cdot\nabla$.
By the duality $\nabla \lambda_{i}\cdot l^j = \delta_{i,j}$, $i,j=1,2$, we have the generalization of \eqref{eq:calculus}
\begin{equation}\label{eq:Dn}
D^{\beta}_n (\lambda_{1}^{\alpha_1}\lambda_{2}^{\alpha_2}) = \beta!\delta( \alpha, \beta) \quad \textrm{ for }\alpha, \beta\in \mathbb N^{1:2}, |\alpha |=|\beta |=r.
\end{equation}
%When $T$ is the reference triangle, $\lambda_{0^*}^{\alpha} = x_1^{\alpha_1}x_2^{\alpha_2}$, $D_n^{\beta} = D^{\beta}$, and~\eqref{eq:Dn} is the standard calculus result.  

By the chain rule, it is easy to show that the vanishing $\{D^{\beta}_nu, \beta \in \mathbb N^{1:2}, |\beta | \leq m\}$ is equivalent to the vanishing $\{D^{\beta} u, \beta \in \mathbb N^{1:2}, |\beta | \leq m\}$. So we will work with $D^{\beta}_n$. 

A Bernstein basis of $\mathbb P_k(D(\texttt{v}_0, m))$ is given by $\{ \lambda_{0}^{k - |\alpha|}\lambda_{1}^{\alpha_1}\lambda_{2}^{\alpha_2}, \alpha\in \mathbb N^{1:2}, |\alpha|\leq m \}$. 
%First we show that the polynomial space
%$
%\spa \left \{ \lambda^{\alpha}, \alpha  \in \mathbb T^{2}_k, |\alpha_{0^*} |=r\right \}
%$
%is uniquely determined by the DoFs
%$
%\{D^{\beta}_n u (\texttt{v}_0), \beta \in \mathbb T_r^{1:n}\}.
%$ 
%Assume $u=\sum_{\alpha \in \mathbb T_r^{1:n}}c_{\alpha}\lambda_{0}^{k - r}\lambda_{0^*}^{\alpha}$ with $c_{\alpha}\in
%\mathbb R$ and $D^{\beta}_n u (\texttt{v}_0)=0$ for all $\beta \in \mathbb T_r^{1:n}$.
%% By the duality $\nabla \lambda_{i}\cdot n^j = \delta_{i,j}$, $i,j=1,2$,
%Apply~\eqref{eq:Dn} to get
%$$
%D^{\beta}_n u (\texttt{v}_0) = \beta!  c_{\beta}  \lambda_0^{k-r}(\texttt{v}_0)=\beta!  c_{\beta} \quad\textrm{ for } \beta \in \mathbb T_r^{1:n},
%$$
%which implies $c_{\beta}=0$, and then $u=0$.
%
Assume $u=\sum_{\alpha \in \mathbb N^{1:2}\atop |\alpha| \leq  m}c_{\alpha}\lambda_{0}^{k - |\alpha|}\lambda_{1}^{\alpha_1}\lambda_{2}^{\alpha_2}$ with $c_{\alpha}\in
\mathbb R$ and $D^{\beta} u (\texttt{v}_0)=0$ for all $\beta \in \mathbb N^{1:2}$ satisfying $ |\beta | \leq  m$. We shall prove $c_{\alpha} = 0$ by induction. 

For $|\alpha | = 0$, as $c_{(0,0)} = u(\texttt{v}_0)= 0$, we conclude $c_{(0,0)}=0$. Assume $c_{\alpha}=0$ for all $\alpha \in \mathbb N^{1:2}$ satisfying $|\alpha|\leq r-1$, i.e., $u=\sum_{\alpha \in \mathbb N^{1:2}\atop r\leq|\alpha| \leq  m}c_{\alpha}\lambda_{0}^{k - |\alpha|}\lambda_{1}^{\alpha_1}\lambda_{2}^{\alpha_2}$. By Lemma~\ref{lm:derivative}, the derivative $D^{\beta}(\lambda_{0}^{k - |\alpha|}\lambda_{1}^{\alpha_1}\lambda_{2}^{\alpha_2})$ vanishes at $\texttt{v}_0$ for all $\beta\in\mathbb N^{1:2}$ satisfying $|\beta|<|\alpha|$. Hence, for $|\beta| = r$, using~\eqref{eq:Dn},
$$
D_n^{\beta}u(\texttt{v}_0)=D_n^{\beta}\Big(\sum_{\alpha \in \mathbb N^{1:2}, |\alpha|=r}c_{\alpha}\lambda_{0}^{k - r}\lambda_{1}^{\alpha_1}\lambda_{2}^{\alpha_2}\Big )(\texttt{v}_0)=\beta!  c_{\beta} = 0,
$$
which implies $c_{\beta}=0$ for all $\beta \in \mathbb N^{1:2}$, $|\beta | = r$.
Running $r=1,2, \ldots, m$, we conclude $u = 0$.
\end{proof}

%Here is a summary of the proof. The DoF-Function matrix
%$(D_n^{\beta}(\lambda_{0}^{k - |\alpha|}\lambda_{0^*}^{\alpha})(\texttt{v}_0))_{\beta, \alpha}$ is block lower triangular where the lattice nodes are sorted by their length. Then if each block matrix on the diagonal ($|\alpha| = |\beta| = r$) is invertible, the whole matrix is invertible which is equivalent to the unisolvence. For the block diagonal matrix, we switch to the derivative $D^{\beta}_n$ which is dual to the Bernstein form of polynomials. 

\subsection{Normal derivatives on edges}
Given an edge $e$, we identify lattice nodes to determine the normal derivative up to order $m$
$$
\left \{ \frac{\partial^{\beta} u}{\partial n_e^{\beta}} \mid _e , 0\leq \beta \leq m\right \}.
$$
By Lemma~\ref{lm:derivative}, if the lattice node is $r^{e}+1$ away from the edge, then the corresponding Bernstein polynomial will have vanishing normal derivatives up to order $r^{e}$.
%\begin{lemma}
%For $u = \lambda^{\alpha}, \alpha \in \mathbb N^{0:2}_k$, if $\alpha \notin D(e, k)$, then
%$$
%\frac{\partial^{k} u}{\partial n_e^{k}} \mid _e = 0.
%$$
%\end{lemma}
%\begin{proof}
%The condition $\alpha \notin D(e, k)$ means $\dist(\alpha, e)>k$. Without loss of generality, we take $e = e_{0,1}$. Then $u = \lambda_0^{\alpha_0}\lambda_1^{\alpha_1}\lambda_2^{\alpha_2}$ and $\alpha_2 > s$. Notice that edge $e_{0,1}$ is on the zero level set  of $\lambda_2$, i.e., $\lambda_2|_{e_{0,1}} = 0$. The derivative $D^k u$ will contain a factor $\lambda_2^{\alpha_2 - k}$ and thus $\alpha_2 > s$ implies $D^ku|_e = 0$.
%\end{proof}

We have used lattice nodes $D(\Delta_0(e), r^{\texttt{v}}):= \cup_{\texttt{v}\in \Delta_0(e)} D( \texttt{v}, r^{\texttt{v}})$ to determine the derivatives at vertices.
We will use $D(e,r^{e})\backslash  D(\Delta_0(e), r^{\texttt{v}})$ for the normal derivative.

\begin{lemma}\label{lem:edgeunisolvence2d}
Let $r^{\texttt{v}}\geq r^{e} \geq 0$ and $k \geq 2 r^{\texttt{v}}+1$. Let $e\in \Delta_1(T)$ be an edge of a triangle $T$.
The polynomial function space $\mathbb P_{k}( D(e, r^{e})\backslash D(\Delta_0(e), r^{\texttt{v}}))$ is determined by DoFs
\begin{equation*}%\label{eq:normalDoF2D}
\int_e \frac{\partial^{\beta} u}{\partial n_e^{\beta}}  \, \lambda_e^{\alpha} \dd s \quad \alpha \in \mathbb T^{1}_{k - 2(r^{\texttt{v}}+1) + \beta}, \beta = 0,1,\ldots, r^{e}.
\end{equation*}
\end{lemma}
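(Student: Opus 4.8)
The plan is to establish unisolvence in the style of Lemma~\ref{lem:vertexunisolvence}: first match dimensions, then prove injectivity via a triangular structure organized by $\dist(\alpha,e)$. Without loss of generality take $e=\{1,2\}$, so that $e^*=\{0\}$ and $\dist(\alpha,e)=\alpha_0$. A node $\alpha$ lies in $D(e,r^e)\setminus D(\Delta_0(e),r^{\texttt v})$ exactly when $\alpha_0\le r^e$ and $\alpha_1,\alpha_2\le k-r^{\texttt v}-1$. Grouping by the value $\alpha_0=j$ and using $\alpha_1+\alpha_2=k-j$, the constraints become $\alpha_1,\alpha_2\ge r^{\texttt v}+1-j$, giving $k-2r^{\texttt v}-1+j$ nodes for each $j$. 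Summing over $j=0,\dots,r^e$ reproduces $\sum_{\beta=0}^{r^e}|\mathbb T^1_{k-2(r^{\texttt v}+1)+\beta}|$, the total number of DoFs, so the dimensions agree.

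For injectivity I would expand $u=\sum_\alpha c_\alpha\lambda^\alpha$ over the index set and induct on $j=\alpha_0$ from $0$ to $r^e$. The governing computation is that for a node with $\alpha_0=j$, Leibniz's rule together with the affineness of $\lambda_0$ yields
$$
\frac{\partial^j \lambda^\alpha}{\partial n_e^j}\Big|_e = j!\,(\partial_{n_e}\lambda_0)^j\,\lambda_e^{(\alpha_1,\alpha_2)},
$$
a nonzero constant times the tangential Bernstein monomial, whereas Lemma~\ref{lm:derivative} forces $\partial_{n_e}^j\lambda^\alpha|_e=0$ whenever $\alpha_0>j$ (the order-$j$ normal derivative is a combination of Cartesian $D^\beta$ with $|\beta|=j$, each of which vanishes once $\dist(\alpha,e)>j$). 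Consequently the order-$\beta=j$ DoFs see only the coefficients with $\alpha_0=j$ after those with smaller $\alpha_0$ have been eliminated by the inductive hypothesis; this is precisely the block-triangular structure, with blocks indexed by $\dist(\alpha,e)$.

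The decisive step is the invertibility of each diagonal block. Removing the two vertex tubes forces $\alpha_1,\alpha_2\ge r^{\texttt v}+1-j$ for the surviving nodes, so the tangential monomials $\lambda_e^{(\alpha_1,\alpha_2)}$ all carry the factor $b_e^{\,r^{\texttt v}+1-j}$ and span $b_e^{\,r^{\texttt v}+1-j}\,\mathbb P_{k-2(r^{\texttt v}+1)+j}(e)$, while the order-$j$ test functions $\lambda_e^{\alpha_e}$ form a Bernstein basis of $\mathbb P_{k-2(r^{\texttt v}+1)+j}(e)$. Writing the restricted derivative as $b_e^{\,r^{\texttt v}+1-j}\,p$, vanishing of all order-$j$ DoFs says $\int_e b_e^{\,r^{\texttt v}+1-j}\,p\,q\,\dd s=0$ for every $q$ in that space; choosing $q=p$ and using $b_e\ge 0$ on $e$ with $b_e>0$ in its interior forces $p\equiv 0$, hence $c_\alpha=0$ for all $\alpha$ with $\alpha_0=j$. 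I expect this positive-definiteness of the weighted edge mass matrix to be the main obstacle, the remainder being triangular bookkeeping; the hypotheses $r^{\texttt v}\ge r^e$ and $k\ge 2r^{\texttt v}+1$ enter exactly to keep the weight exponent $r^{\texttt v}+1-j\ge 0$ and the degree $k-2(r^{\texttt v}+1)+j$ admissible. Completing the induction through $j=r^e$ gives $u=0$, and with the dimension match this yields unisolvence.
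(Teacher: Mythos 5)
Your proposal is correct and follows essentially the same route as the paper's proof: identify the nodes at distance $j$ from $e$ with $\mathbb T^1_{k-2(r^{\texttt{v}}+1)+j}$ to match dimensions, use Lemma~\ref{lm:derivative} to get the block lower-triangular structure in the distance $\dist(\alpha,e)$, and invert each diagonal block by extracting the factor $b_e^{\,r^{\texttt{v}}+1-j}$ and exploiting its positivity on the interior of $e$. The only cosmetic difference is that you phrase the diagonal-block step as positive definiteness of the weighted mass matrix (taking $q=p$), whereas the paper concludes the vanishing of the coefficients directly from the same positivity of $\lambda_e$.
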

\begin{proof}
Without loss of generality, take $e = e_{0,1}$. By definition $ D(e,r^{e}) = \cup_{i=0}^{r^{e}} L(e,i)$, where recall that
$$
L(e,i) = \{ \alpha \in \mathbb T^{2}_k, \dist(\alpha,e) = i\} = \{ \alpha \in \mathbb T^{2}_k, \alpha_0+\alpha_1 = k - i\}
$$ 
consists of lattice nodes parallel to $e$ and with distance $i$. Define the map $(\alpha_0, \alpha_1, \alpha_2) \to (\alpha_0, \alpha_1)$ which is one-to-one between $L(e,i)$ and $\mathbb T^{1}_{k-i}(e)$.

Now we use the requirement $\alpha \notin D(\Delta_0(e), r^{\texttt{v}})$ to figure out the bound of the components. Using Lemma~\ref{lm:dist}, we derive from $\dist (\alpha, \texttt{v}_0) > r^{\texttt{v}}$ that $ \alpha_0 < k - r^{\texttt{v}}$. Together with $\alpha_0+\alpha_1 = k - i$, we get the lower bound $\alpha_1\geq r^{\texttt{v}} - i+1$. Similarly $\alpha_0\geq r^{\texttt{v}} - i+1$.
Therefore
$$
L(e,i) \backslash D(\Delta_0(e), r^{\texttt{v}}) =\{ (\alpha_0, \alpha_1, i), \alpha_0+\alpha_1 = k - i, \min\{\alpha_0, \alpha_1\} \geq  r^{\texttt{v}} - i+1\}.
$$
Define the one-to-one mapping 
\begin{align*}
\mathbb T^{1}_{k - 2(r^{\texttt{v}}+1) + i}&\to L(e,i) \backslash D(\Delta_0(e), r^{\texttt{v}}), \\
(\alpha_0, \alpha_1) &\mapsto (\alpha_0 + (r^{\texttt{v}} - i+1), \alpha_1+ (r^{\texttt{v}} - i+1), i).
\end{align*}
%Vice verse, given $\alpha \in \mathbb T^{1}_{k - 2(r^{\texttt{v}}+1) + i}(e)$, the node $(\alpha_0 + (r^{\texttt{v}} - i+1), \alpha_1 + (r^{\texttt{v}} - i+1), i)\in L(e,i) \backslash D(\Delta_0(e), r^{\texttt{v}})$. 
%
With the help of this one-to-one mapping, we shall prove the polynomial function space $\mathbb P_{k}(L(e,i) \backslash D(\Delta_0(e), r^{\texttt{v}}))$ is determined by DoFs
\begin{equation}\label{eq:normalDoF2Di}
\int_e \frac{\partial^{i} u}{\partial n_e^{i}}  \, \lambda_e^{\alpha} \dd s \quad \alpha \in \mathbb T^{1}_{k - 2(r^{\texttt{v}}+1) + i}.
\end{equation}
Take a $u= \sum_{\alpha \in \mathbb T^{1}_{k - 2(r^{\texttt{v}}+1) + i}}c_{\alpha}\lambda_e^{\alpha + r^{\texttt{v}} - i+1}\lambda_2^{i}  \in \mathbb P_{k}(L(e,i) \backslash D(\Delta_0(e), r^{\texttt{v}}))
$ with coefficients $c_{\alpha}\in\mathbb R$. By the chain rule and the fact $\lambda_2|_e=0$, in the non-zero terms of $\frac{\partial^{i} u}{\partial n_e^{i}}\mid_e$, the derivative in $\frac{\partial^{i} u}{\partial n_e^{i}}|_e$ will all apply to $\lambda_2^i$, so
$$
\frac{\partial^{i} u}{\partial n_e^{i}}\mid_e=i!(n_e\cdot\nabla\lambda_2)^i\lambda_e^{r^{\texttt{v}} - i+1}\sum_{\alpha \in \mathbb T^{1}_{k - 2(r^{\texttt{v}}+1) + i}}c_{\alpha}\lambda_e^{\alpha}|_e.
$$
Noting that $n_e\cdot\nabla\lambda_2$ is a constant and the bubble polynomial $\lambda_e^{r^{\texttt{v}} - i+1}$ is always positive in the interior of $e$, the vanishing DoF~\eqref{eq:normalDoF2Di} means $c_{\alpha} = 0$ for all $\alpha\in \mathbb T^{1}_{k - 2(r^{\texttt{v}}+1) + i}$. 

It follows from Lemma~\ref{lm:derivative} that $\frac{\partial^{\beta}}{\partial n_e^{\beta}}(\lambda_{e}^{\alpha}\lambda_{2}^{i})|_e=0$ for $\alpha\in\mathbb T^{1}_{k-i}(e)$ and $0\leq \beta<i\leq r^e$. That is the matrix 
$$
\left (\frac{\partial^{\beta}}{\partial n_e^{\beta}}(\lambda_{e}^{\alpha}\lambda_{2}^{i})|_e\right )_{0\leq \beta \leq r^e, 0\leq i \leq r^e,\alpha \in \mathbb T^1_{k - 2(r^{\texttt{v}}+1) + i}(e)}
$$
is lower block triangular as follows.
$$
\renewcommand{\arraystretch}{1.35}
\begin{array}{cc}
\begin{array}{c}
 \; \beta \backslash \;   i\\
 \alpha 
\end{array}
 &  
%\begin{array}{ccccc}
%0 & 1 & \ldots	& r^e-1 & r^e
%\end{array}
%\smallskip
%\\ 
\begin{array}{ccccc}
0 & 1 & \ldots	& r^e-1 & r^e\\
\mathbb{T}_{k-2(r^{\texttt{v}}+1)}^1 & \mathbb{T}_{k-2(r^{\texttt{v}}+1)+1}^1 & \cdots	& \mathbb{T}_{k-2(r^{\texttt{v}}+1)+r^e-1}^1 & \mathbb{T}_{k-2(r^{\texttt{v}}+1)+r^e}^1
\end{array}
\medskip
\\
\begin{array}{c}
0 \\ 1 \\ \vdots \\ r^e-1 \\ r^e
\end{array} 
& \left(
\begin{array}{>{\hfil$}m{1.5cm}<{$\hfil}|>{\hfil$}m{1.9cm}<{$\hfil}|>{\hfil$}m{0.5cm}<{$\hfil}|>{\hfil$}m{2.5cm}<{$\hfil}|>{\hfil$}m{1.9cm}<{$\hfil}}
\square & 0 & \cdots	& 0 & 0 \\
\hline
\square & \square & \cdots	& 0 & 0 \\
\hline
\vdots & \vdots & \ddots	& \vdots & \vdots \\
\hline
\square & \square & \cdots	& \square & 0 \\
\hline
\square & \square & \cdots	& \square& \square 
\end{array}
\right)
\end{array}
$$

Since we have proved each block matrix is invertible, then the whole lower block triangular matrix is invertible which is equivalent to the unisolvence.  
%
%
%Applying the same argument in the proof of Lemma~\ref{lem:vertexunisolvence},
%it follows from Lemma~\ref{lm:derivative} that matrix 
%$(\frac{\partial^{\beta}}{\partial n_e^{\beta}}(\lambda_{e}^{\alpha}\lambda_{2}^{i})|_e)_{\beta, i}$ is. \LC{ more explanation. } 
%
%\XH{ This means  is lower block triangular, which is invertible.}
%Consequently $u = 0$. 
%\LC{Using the previous argument for Hermite. Lower triangular structure indexed by $\beta$ and $\alpha_{\texttt{v}}$. Write $u = \lambda_e^{\alpha_e} \lambda_{\texttt{v}}^{\alpha_{\texttt{v}}}$ where $\texttt{v} = e^*$. Then only consider the block diagonal $\beta = \alpha_{\texttt{v}}$ for which the normal derivative will contain $\lambda_e^{\alpha_e}$ only. Then use the range.}
%Let $w = \frac{\partial^i u}{\partial n^i}\in \mathbb P_{k - i}(T)$ for $i=0,1\ldots, r^{e}$. On edge $e$, by the Hermite interpolation~\eqref{eq:1DHerm1}-\eqref{eq:1DHerm2}, $w|_e$ is uniquely determined by
%\begin{align*}
%\partial_e^{(j)} w(\texttt{v}) & \quad  j = 0,1,\ldots, r^{\texttt{v}} - i, \texttt{v}\in \Delta_0(e),\\
%\int_e w \, \lambda_e^{\alpha_e}\dd s & \quad \alpha_e \in \mathbb N^{0:1}_{k - 2(r^{\texttt{v}}+1) + i} (\stackrel{\circ}{e}).
%\end{align*}
%For $u\in \spa\{ \lambda^{\alpha}, \alpha \in D(e,k)\backslash D(\Delta_0(e), r^{\texttt{v}}) \}$, as $\dist(\alpha, \Delta_0(e)) > r^{\texttt{v}}$, $D^{\beta}u(\texttt{v}) = 0$ for all $\beta\leq r^{\texttt{v}}$ and $\texttt{v}\in \Delta_0(e)$. Therefore the vanishing DoF implies $w|_e = 0$.
\end{proof}

%Geometrically we push all lattice nodes in $D(e,r^{e})\backslash D(\Delta_0(e), r^{\texttt{v}})$ to the edge to determine normal derivatives on $e$ up to order $r^{e}$.

\subsection{Geometric decompositions of the simplicial lattice}
%The requirement $r^{e}\leq r^{\texttt{v}}$ in Lemma~\ref{lem:edgeunisolvence2d} is due to the fact that the smoothness of the normal derivative is less than or equal to that of the vertices. 
Inside a triangle, a vertex will be shared by two edges and to have enough lattice nodes for each edge,  $r^{\texttt{v}}\geq 2r^{e}$ is required; see Fig.~\ref{fig:2Ddec}(b).

%\begin{figure}[htbp]
%\begin{center}
%%\includegraphics[width=4in]{figures/2Ddecomposition.png}
%\includegraphics[width=2.5in]{figures/2DC1element.png}
%\caption{A .}
%\label{fig:2Ddec}
%\end{center}
%\end{figure}

\begin{lemma}\label{lem:geodecomp2d}
Let $r^{e} = m\geq -1$, $r^{\texttt{v}}\geq \max\{2r^{e},-1\}$, and nonnegative integer $k\geq 2r^{\texttt{v}}+1\geq 4m+1$. Let $T$ be a triangle. Then it holds that
\begin{align}\label{eq:smoothdec2d}
  \mathbb T^{2}_k(T) = S_0(T) \Oplus S_1(T) \Oplus S_2(T),
\end{align}
where
\begin{align*}
S_0(T) &:=  D( \Delta_0(T), r^{\texttt{v}}), \\
S_1(T) &:= \Oplus_{e\in \Delta_{1}(T)}\left ( D(e,r^{e}) \backslash S_0(T)\right ),\\
S_2(T) &:= \mathbb T^{2}_k(T) \backslash (S_0(T) \oplus S_1(T)),
\end{align*}
with cardinality 
\begin{align*}
|S_0(T)| & = 3 { r^{\texttt{v}}+ 2 \choose 2}, \\
|S_1(T)| & = 3 \sum_{i=0}^{r^{e}}(k-1 - 2r^{\texttt{v}}+i), \\
|S_2(T)| & = { k+2 \choose 2} - |S_0(T)| - |S_1(T)|.
\end{align*}
This leads to the decomposition of the polynomial space
\begin{align}\label{eq:PkCmdec}
\mathbb P_k(T)&= \mathbb P_k(S_0(T)) \Oplus \mathbb P_k(S_1(T)) \Oplus \mathbb P_k(S_2(T)).
%\Oplus_{\ell = 0}^2 \spa \{ \lambda^{\alpha}, \alpha\in S_{\ell}(T)\}\\
%&= \Oplus_{i=0}^2\spa\{ \lambda^{\alpha}, \alpha\in \mathbb N^{0:2}_k, |\alpha_{i^*}|\leq s_{0}\}  \\
%&\oplus  \\
%&\oplus b_T^{m+1}\mathbb P_{k-3(m+1)}^{0,0}(T),
\end{align}
%Furthermore, we have characterization 
%\begin{equation}\label{eq:PkS1}
%\mathbb P_k(S_1(T)) = \Oplus_{e\in\Delta_{1}(T)}\Oplus_{i=0}^mb_T^ib_e^{r^{\texttt{v}}+1-2i}\mathbb P_{k-2(r^{\texttt{v}}+1)+i}(e),
%\end{equation}
%and
%\begin{equation}\label{eq:PkS2}
%\mathbb P_k(S_2(T))  =  b_T^{r^e+1}\ \stackrel{\circ}{\mathbb P}_{k-3(r^{e}+1)}(\bs r),
%\end{equation}
%where 
%\begin{equation}\label{eq:Pcirc}
%\stackrel{\circ}{\mathbb P}_{k-3(r^{e}+1)}(\bs r) := {\rm span} \{ \lambda^{\alpha}, \alpha \in \mathbb T^{2}_{k-3(r^{e}+1)}, \alpha \leq k - r^{\texttt{v}} - r^{e} - 2\}.
%\end{equation}
\end{lemma}
\begin{proof}
As $k\geq 2r^{\texttt{v}}+1$, the sets $\{D(\texttt{v}, r^{\texttt{v}}), \texttt{v}\in \Delta_0(T)\}$ are disjoint. 
%As $\dist(\texttt{v},e) = k \geq 2r^{\texttt{v}}+1 > r^{e}$ for $\texttt{v}\not\in \Delta_0(e)$, $D(e,r^{e}) \backslash D( \Delta_0(T), r^{\texttt{v}}) = D(e, r^{e}) \backslash D( \Delta_0(e), r^{\texttt{v}})$. 

We then show that the sets $\{D(e,r^{e}) \backslash D( \Delta_0(e), r^{\texttt{v}}), e\in \Delta_1(T)\}$ are disjoints.
%, where $S_0(e) =  D( \Delta_0(e), r^{\texttt{v}})$. Take two edges $e_{01}$ and $e_{02}$. 
A node $\alpha \in D(e_{01}, r^{e})$ implies $\alpha_2\leq r^{e}$ and $\alpha \in D(e_{02}, r^{e})$ implies $\alpha_1\leq r^{e}$. Therefore $|\alpha_{0^*}| = \alpha_1+ \alpha_2 \leq 2r^{e}\leq r^{\texttt{v}}$, i.e., $D(e_{01}, r^{e})\cap D(e_{02}, r^{e})\subseteq D(\texttt{v}_0, r^{\texttt{v}})$. Repeat the argument for each pair of edges to conclude $\{D(e,r^{e}) \backslash D( \Delta_0(e), r^{\texttt{v}}), e\in \Delta_1(T)\}$ are disjoint. 

For a given edge $e$, the vertex $e^*$ is opposite to $e$ and $L(e^*, r^{\texttt{v}})= L(e, k - r^\texttt{v})$. As $k - r^\texttt{v}\geq r^\texttt{v}+1> r^e$, we conclude $D(e, r^{e}) \cap D(e^*, r^{\texttt{v}}) = \varnothing$ and consequently $D(e,r^{e}) \backslash D( \Delta_0(T), r^{\texttt{v}}) = D(e, r^{e}) \backslash D( \Delta_0(e), r^{\texttt{v}})$.

Then decompositions~\eqref{eq:smoothdec2d} and~\eqref{eq:PkCmdec} follow.
%
%As a consequence $$\mathbb P_k(T)= \Oplus_{\ell = 0}^2 \spa \{ \lambda^{\alpha}, \alpha\in S_{\ell}(T)\}.$$ 
%
%To show \eqref{eq:PkS1}, we notice that, for a lattice node $\alpha \in D(e_{01},r^{e}) \backslash D( \Delta_0(e_{01}), r^{\texttt{v}})$, it satisfies the constraint
%$$
%\alpha_2 \leq r^{e}, \; \alpha_0+\alpha_2 > r^{\texttt{v}}, \; \alpha_1+\alpha_2 > r^{\texttt{v}}, \;\alpha_ 0 + \alpha_1 + \alpha_2 = k.
%$$
%We let $\alpha_2 = i$ for $i=0,1,\ldots, r^{e} = m$. Then 
%%\mnote{ check the index range. seems not correct. e.g. $i=0, k- 2(r^{\texttt{v}}+1)$ could be negative. 
%%
%%It's OK. Because $k- 2(r^{\texttt{v}}+1)\geq-1$, $k- 2(r^{\texttt{v}}+1)=-1$ means there is no moment dof.
%%}
%$$
%\lambda_0^{\alpha_0}\lambda_1^{\alpha_1}\lambda_2^{\alpha_2} = \lambda^{i}(\lambda_0\lambda_1)^{r^{\texttt{v}} - 2i +1} \lambda_{0}^{\alpha_ 0 -  (r^{\texttt{v}} + 1)+i}\lambda_{1}^{\alpha_1 - (r^{\texttt{v}} + 1)+i} = b_T^i b_{e}^{r^{\texttt{v}} - 2i+1}\lambda_e^{\alpha_e}
%$$
%with $\alpha_e\in \mathbb T^{0:1}_{k-2(r^{\texttt{v}}+1)+i}$, which gives~\eqref{eq:PkS1}. 
%For a node $\alpha\in S_2(T)$, it satisfies the constraint
%$$
%\alpha_ 0 + \alpha_1 + \alpha_2 = k,\; \alpha > r^{e}, \; \alpha < k-r^{\texttt{v}}.
%$$
%Then set $\tilde \alpha = \alpha - (r^{e} +1)$. We can write
%$$
%\lambda^{\alpha} = \lambda^{r^{e}+1} \lambda^{\tilde \alpha}, \; |\tilde \alpha | = k - 3(r^{e}+1), \; \tilde \alpha\leq k-r^{\texttt{v}} - r^{e} - 2,
%$$
%which leads to~\eqref{eq:PkS2}.
%
%The cardinalities $|S_0|$ and $|S_2|$ are straight forward and $|S_1|$ is given by \eqref{eq:PkS1}.
\end{proof}

\begin{figure}[htbp]
\subfigure[The geometric decomposition of a Hermite element: $m=0, r^{e} = 0, r^{\texttt{v}} = 1, k = 8$.]{
\begin{minipage}[t]{0.425\linewidth}
\centering
\includegraphics*[width=4.75cm]{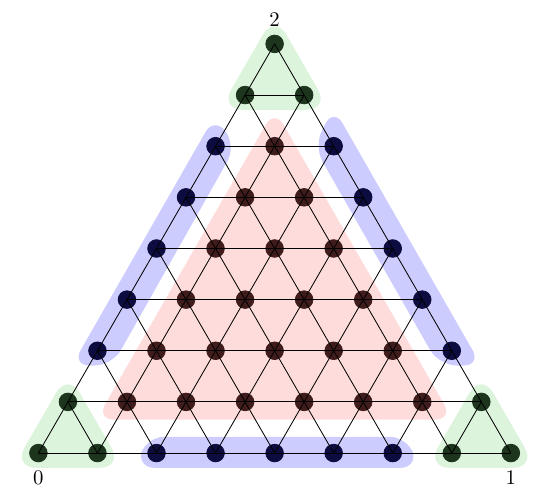}
\end{minipage}}%% 
\quad \quad
\subfigure[The geometric decomposition of a $C^1$ element: $m=1, r^{e} = 1, r^{\texttt{v}} = 2, k = 8$.]
{\begin{minipage}[t]{0.425\linewidth}
\centering
\includegraphics*[width=4.75cm]{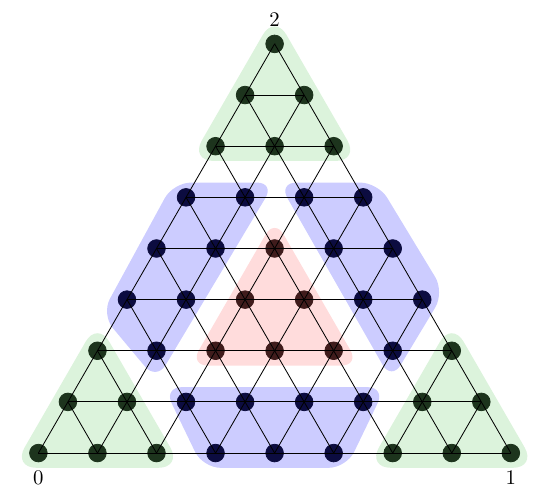}
\end{minipage}}
\caption{Comparison of the geometric decompositions of a two-dimensional Hermite element and a $C^1$-conforming element.}
\label{fig:2Ddec}
\end{figure}

Denote by 
$$
\mathbb B_k(\bs r) = \mathbb B_k(\bs r; T) := \mathbb P_k(S_2(T)) = {\rm span} \{ \lambda^{\alpha}, \alpha \in S_2(T) \},
$$
and call it the polynomial bubble space, which will play an important role in our construction of finite element de Rham complexes. Polynomials in $\mathbb B_k(\bs r)$ will have vanishing derivatives up to order $r^e$, and more precisely
\begin{align*}
\mathbb B_{k}(\boldsymbol r,T) =\{u\in\mathbb P_k(T):&\, \nabla^ju \textrm{ vanishes at all vertices of $T$ for $j=0,\ldots, r^{\texttt{v}}$}, \\
& \textrm{ and $\nabla^ju$ vanishes on all edges of $T$ for $j=0,\ldots, r^{e}$}\}.
\end{align*}
%and
%%and 
%Therefore 
%and 
%$$
%\dim \mathbb B_k(\bs r) = \dim \stackrel{\circ}{\mathbb P}_{k-3(r^{e}+1)}(\bs r) = |S_2(T)|.
%$$
%where
%\begin{align*}
%\mathbb P_{k-3(m+1)}^{0,0}(T):=\spa\big\{\lambda^{\alpha} \mid \alpha\in\mathbb T^{2}_{k-3(m+1)}, \alpha \leq k-r^{\texttt{v}}-m-2\big\}.
%\end{align*}
%When $r^{\texttt{v}} = 2r^{e}$ or $r^{\texttt{v}} = 2r^{e}+1$, $\stackrel{\circ}{\mathbb P}_{k-3(r^{e}+1)}(\bs r) = \mathbb P_{k-3(r^e+1)}(T)$ as the constraint automatically holds. 

\begin{lemma}
Let $r^{e} \geq -1$ and $r^{\texttt{v}}\geq \max\{2r^{e},-1\}$. Then
\begin{enumerate}
\item $\dim \mathbb B_k(\bs r)\geq 1$, when $k\geq \max\{2r^{\texttt{v}}+1, 3r^e+3, (r^e+3)[r^{\texttt{v}}=0]\}$;
\item $\dim \mathbb B_k(\bs r)\geq 3$, when $k\geq \max\{2r^{\texttt{v}}+1, 3r^e+4, 4[r^e=-1, r^{\texttt{v}}=1], 2[r^e=-1, r^{\texttt{v}}=0]\}$.
\
\end{enumerate}
\end{lemma}
\begin{proof}
The first statement has been proved in \cite{Chen;Huang:2022FEMcomplex3D}. We can prove the second statement by verifying the following inequality directly
$$
\dim \mathbb B_k(\bs r)={k-3r^e-1 \choose 2}-3{r^{\texttt{v}}-2r^e \choose 2}\geq3.
$$
\end{proof}

%\begin{remark}\rm
%Such simplification is not needed in implementation. Distance to a vertex or an edge is computable and a logic array can be used to represent $S_{\ell}(T)$.
%\end{remark}
\subsection{Smooth finite elements in two dimensions}
We are in the position to present $C^m$-finite elements on a triangulation. 
\begin{theorem}\label{thm:Cr2dfemunisolvence}
 Let $r^{e}=m \geq -1$, $r^{\texttt{v}}\geq\max\{2r^{e},-1\}$, and nonnegative integer $k\geq 2r^{\texttt{v}}+1$. Let $T$ be a triangle. The shape function space $\mathbb P_{k}(T)$ is determined by the DoFs
\begin{subequations}\label{eq:C12d}
\begin{align}
\label{eq:C12d0}
D^{\alpha} u (\texttt{v}) & \quad \alpha \in \mathbb N^{1:2}, |\alpha | \leq  r^{\texttt{v}}, \texttt{v}\in \Delta_0(T),\\
\label{eq:C12d1}
\int_e \frac{\partial^{\beta} u}{\partial n_e^{\beta}} \, q \dd s & \quad q \in \mathbb P_{k - 2(r^{\texttt{v}}+1) + \beta}(e), \beta = 0,\ldots, r^{e}, e\in \Delta_1(T),\\
\label{eq:C12d2}
\int_T u \, q \dx & \quad q \in \mathbb B_k(\bs r).
\end{align}
\end{subequations}
\end{theorem}
\begin{proof}
By the decomposition~\eqref{eq:PkCmdec} of $\mathbb P_k(T)$, the dimension of $\mathbb P_k(T)$ matches the number of DoFs. Let $u\in\mathbb P_{k}(T)$ satisfy all the DoFs~\eqref{eq:C12d0}-\eqref{eq:C12d2} vanish.
Thanks to Lemma~\ref{lem:vertexunisolvence}, Lemma~\ref{lem:edgeunisolvence2d} and Lemma~\ref{lem:geodecomp2d}, it follows from the vanishing DoFs~\eqref{eq:C12d0} and~\eqref{eq:C12d1} that $u\in \mathbb P_{k}(S_2(T))$. Then $u=0$ holds from the vanishing DoF~\eqref{eq:C12d2}.
% With DoFs~\eqref{eq:C12d0} and~\eqref{eq:C12d1} vanishes, only $\lambda^{\alpha}, \alpha \in \mathbb N^{0:2}_k,$ with property $\dist(\alpha, \Delta_0(T)) > r^{\texttt{v}}$ and $\dist(\alpha, \Delta_1(T)) > r^{e}=m$ are left to be determined.
% The condition $\dist(\alpha, \Delta_1(T)) > m$ is equivalent to $\alpha \geq m + 1$ and the condition  $\dist(\alpha, \Delta_0(T)) > r^{\texttt{v}}$ is equivalent to $\alpha \leq r - r^{\texttt{v}} - 1$. Let $\tilde \alpha = \alpha - (m+1)$. Then
% $\lambda^{\alpha} = b_T^{m+1} \lambda^{\tilde \alpha}$. As $b_T$ is always positive, it can be removed from DoFs and~\eqref{eq:C12d2} follows.
\end{proof}

When $r^{e}=m = 1$ and $r^{\texttt{v}}=2$, this is known as Argyris element~\cite{ArgyrisFriedScharpf1968,MorganScott1975}. 
When $r^{e}=m$, $r^{\texttt{v}}=2m$ and $k = 4m+1$, $C^m$-continuous finite elements have been constructed in~\cite{BrambleZlamal1970,Zenisek1970}, see also~\cite[Section 8.1]{Lai;Schumaker:2007Spline} and the references therein, whose DoFs are different from~\eqref{eq:C12d1}-\eqref{eq:C12d2}. Here DoFs~\eqref{eq:C12d0}-\eqref{eq:C12d2} are firstly constructed in~\cite{huConstructionConformingFinite2021}. The DoFs in \cite{Lai;Schumaker:2007Spline}, also called nodal minimal determining sets in the spline literature, are the point evaluation of functions and their derivatives at some nodes. While DoFs \eqref{eq:C12d1}-\eqref{eq:C12d2} are in the integral form, which is beneficial to the unisolvence of the DoFs and the construction of the finite element de Rham complexes. Smooth finite elements with the DoFs in the integral form on simplexes in arbitrary dimension were firstly constructed in \cite{huConstructionConformingFinite2021}.

% are adopted as
%\begin{align*}
%\int_e \frac{\partial^{\beta} u}{\partial n_e^{\beta}}  \, \lambda_e^{\alpha} \dd s & \quad \alpha \in \mathbb T^{1}_{k -\beta}, \alpha\geq r^{\texttt{v}}-\beta+1, \beta = 0,1,\ldots, r^{e},\\
%\int_T u \lambda^{\alpha} \dx & \quad \alpha \in S_2(T),
%\end{align*}
%which are slightly different from~\eqref{eq:C12d1}-\eqref{eq:C12d2}. 
%As $b_T$ is always positive in the interior of $T$, in DoF~\eqref{eq:C12d2}, it can be further changed to $q\in \, \stackrel{\circ}{\mathbb P}_{k-3(r^{e}+1)}(\bs r).$

With mesh $\mathcal T_h$, define the global $C^m$-continuous finite element space 
\begin{align*} 
V(\mathcal T_h) &= \{u\in L^2(\Omega): u|_T\in\mathbb P_k(T)\textrm{ for all } T\in\mathcal T_h, \\
&\qquad\textrm{ and all the DoFs~\eqref{eq:C12d0} and~\eqref{eq:C12d1} are single-valued}\}. 
\end{align*}
Since $r^{\texttt{v}}\geq r^e$, the single-valued DoFs~\eqref{eq:C12d0} and~\eqref{eq:C12d1} will imply $u\in C^m(\Omega)$.

The finite element space $V(\mathcal T_h)$ admits the following geometric decomposition 
\begin{align*} %\label{eq:Vh2d}
V(\mathcal T_h) &= \Oplus_{\texttt{v}\in \Delta_{0}(\mathcal T_h)} \mathbb P_k(D(\texttt{v}, r^{\texttt{v}})) \,\oplus\, \Oplus_{e\in \Delta_{1}(\mathcal T_h)}\mathbb P_k\left ( D(e,r^{e}) \backslash D( \Delta_0(e), r^{\texttt{v}}) \right ) \\
&\quad\,\oplus\,  \Oplus_{T\in \mathcal T_h}\mathbb B_k(\bs r,T). %\notag
\end{align*}
%where $S_0(e) =  $.
The dimension of $V(\mathcal T_h)$ is
\begin{align*}
\dim V(\mathcal T_h) &= |\Delta_0(\mathcal T_h)| {r^{\texttt{v}}+2 \choose 2} + |\Delta_{1}(\mathcal T_h)|\left[{k-2r^{\texttt{v}}+r^e\choose 2}-{k-2r^{\texttt{v}}-1\choose 2}\right] \\
&\quad+|\Delta_2(\mathcal T_h)|\left[{k-3r^e-1 \choose 2}-3{r^{\texttt{v}}-2r^e \choose 2}\right].
\end{align*}
%Denoted by $\tilde k = k-3(r^e+1)$ and $\tilde r = r^{\texttt{v}} - 2r^{e} - 2$. By Lemma~\ref{lm:dist}, we can view the constraint in~\eqref{eq:Pcirc} as $\alpha \notin D(\texttt{v}_i, \tilde r)$ and thus obtain another formula on the dimension
%$$
%\dim \mathbb B_k(\bs r) = {\tilde k+2 \choose 2} - 3{\tilde r + 2 \choose 2} = {k-3r^e-1 \choose 2}-3{r^{\texttt{v}}-2r^e \choose 2}.
%$$
%
In particular, denote by $V^{\rm BZ}(\mathcal T_h)$ the minimum degree case: $r^{e}=m, r^{\texttt{v}}= 2m, k = 4m+1$ with $m\geq0$, which is firstly constructed in \cite{BrambleZlamal1970},
 and the dimension is
$$
\dim V^{\rm BZ}(\mathcal T_h) = |\Delta_0(\mathcal T_h)| {2m + 2 \choose 2} + |\Delta_{1}(\mathcal T_h)| {m+1 \choose 2} + |\Delta_2(\mathcal T_h)| {m \choose 2}.
$$
When $m=0,1$, there is no interior moments as $k = 4m+1$ is not large enough.

\section{Finite Element de Rham Complexes}\label{sec:femderhamcomplex}
In this section we shall construct finite element spaces with appropriate DoFs which make the global finite element complexes~\eqref{eq:femderhamcomplex0} exact
\begin{equation}\label{eq:femderhamcomplex0}
\mathbb R\xrightarrow{\subset} \mathbb V^{\curl}_{k+1}(\mathcal T_h; \boldsymbol{r}_0)\xrightarrow{\curl}\mathbb V^{\div}_{k}(\mathcal T_h; \boldsymbol{r}_1,\boldsymbol{r}_2) \xrightarrow{\div} \mathbb V^{L^2}_{k-1}(\mathcal T_h; \boldsymbol{r}_2)\xrightarrow{}0.
\end{equation} 
Space $\mathbb V_{k}(\mathcal T_h; \boldsymbol{r}_2)$ is denoted as $\mathbb V_{k}^{L^2}(\mathcal T_h; \boldsymbol{r}_2)$ to emphasize it is considered as a subspace of $L^2(\Omega)$ although it might be continuous when $\bs r_2\geq 0$.

Unlike the classical FEEC~\cite{ArnoldFalkWinther2006}, additional smoothness on lower sub-simplexes (vertices and edges for a two-dimensional triangulation) will be imposed, which are described by three vectors $\bs r_0, \bs r_1,$ and $\bs r_2$ with the subscript referring to the $i$-form for $i=0,1,2$. Each $\bs r_i = (r_i^{\texttt{v}}, r_i^e)$ consists of two parameters for the smoothness at vertices and edges, respectively, and $r_i^{\texttt{v}}\geq 2\ r_i^e$ for $i=0,1,2$.
%We allow $r_i^e=-1$
%
%\LC{Use notation $\bs r_1 = (r_1^{\texttt{v}}, r_1^e)$}. The superscript means $1$-form and subscript indicates the smoothness associated to vertices and edges.
% \LC{ Mention Hu group's work and the difference.}
The finite element de Rham complexes constructed in~\cite{huConstructionConformingFinite2021} are exactly complex~\eqref{eq:femderhamcomplex0} with $\bs r_0 = \bs r_1 + 1$ and $\bs r_2 = \bs r_1 - 1$.
We shall consider the general case $\bs r_0 = \bs r_1 + 1, \bs r_1\geq -1,$ and $ \boldsymbol{r}_2\geq \boldsymbol{r}_1\ominus 1 =\max\{ \bs r_1-1, -1 \}$.
% where $k\geq \max\{2r_1^{\texttt{v}}+2, 2r_2^{\texttt{v}}+2, 1\}$, and the three integer vectors $\bs r_0 =(r_0^{\texttt{v}}, r_0^e)$, $\bs r_1 =(r_1^{\texttt{v}}, r_1^e)$, $\bs r_2 =(r_2^{\texttt{v}}, r_2^e)$ satisfying $r_1^{\texttt{v}}\geq 2 \, r_1^e + 1, r_2^{\texttt{v}}\geq 2 \, r_2^e$ and 

\subsection{Continuous vector finite element space and decay smoothness}
%Relations of $\bs r_0, \bs r_1,$ and $\bs r_2$ are needed to form the discrete de Rham complex. 
We first consider a simple case in which the smoothness parameters are decreased by $1$:
\begin{equation*}%\label{eq:rsimple}
\bs r_1\geq 0, \quad \bs r_0 = \bs r_1 + 1, \quad \bs r_2 = \bs r_1 -1.
\end{equation*}
% As the two-dimensional $\curl$ is a rotation of $\grad$, we must have $\bs r_1 = \bs r_0 - 1$ but the relation $ \bs r_2 = \bs r_1 -1$ is imposed for simplicity. 
As $\bs r_1\geq 0$, $\mathbb V^{\div}_{k}(\mathcal T_h; \boldsymbol{r}_1)=\mathbb V^{2}_{k}(\mathcal T_h; \boldsymbol{r}_1)$, and $\mathbb V^{\curl}_{k+1}(\mathcal T_h; \boldsymbol{r}_0)=\mathbb V_{k+1}(\mathcal T_h; \boldsymbol{r}_1+1)$ is at least in $C^1$. In this case, \eqref{eq:femderhamcomplex0} is also called a discrete Stokes complex. The vector element $\mathbb V^{\div}_{k}(\mathcal T_h; \boldsymbol{r}_1)$ is $H^1$-conforming and can be used as the velocity space in discretization of Stokes equation. 

\begin{lemma} \label{lm:dimension}
Let $r_1^e\geq 0, r_1^{\texttt{v}}\geq 2 \, r_1^e + 1, k\geq 2 r_1^{\texttt{v}} + 2
$, and let $ \bs r_0 = \bs r_1+1, \bs r_2 = \bs r_1-1$. 
Write
\begin{align*}
\dim \mathbb V^{\curl}_{k+1}(\mathcal T_h; \boldsymbol{r}_0) &= C_{00}|\Delta_0(\mathcal T_h)| + C_{01}|\Delta_1(\mathcal T_h)| + C_{02}|\Delta_2(\mathcal T_h)|,\\
\dim \mathbb V^{\div}_{k}(\mathcal T_h; \boldsymbol{r}_1)&= C_{10}|\Delta_0(\mathcal T_h)| + C_{11}|\Delta_1(\mathcal T_h)| + C_{12}|\Delta_2(\mathcal T_h)|,\\
\dim \mathbb V^{L^2}_{k-1}(\mathcal T_h; \boldsymbol{r}_2)&= C_{20}|\Delta_0(\mathcal T_h)| + C_{21}|\Delta_1(\mathcal T_h)| + C_{22}|\Delta_2(\mathcal T_h)|.
\end{align*}
The coefficients $C_{ij}$ are presented in the following table
\begin{table}[htp]
	\centering
%	\caption{Dimensions of finite element spaces.}
	\renewcommand{\arraystretch}{1.8}
	\begin{tabular}{@{} c c c c @{}}
	\toprule
& $|\Delta_0(\mathcal T_h)|$ & $|\Delta_1(\mathcal T_h)|$ & $|\Delta_2(\mathcal T_h)|$\\
\hline
$\dim \mathbb V^{\curl}_{k+1}(\mathcal T_h; \boldsymbol{r}_0)$ & ${r_0^{\texttt{v}}+2 \choose 2}$ & $\sum_{i=0}^{r_0^{e}}(k - 2r_0^{\texttt{v}}+i)$ & ${k+3 \choose 2} - 3(C_{00} + C_{01})$  \\
 $\dim \mathbb V^{\div}_{k}(\mathcal T_h; \boldsymbol{r}_1)$ & $2{r_1^{\texttt{v}}+2 \choose 2}$ & $2\sum_{i=0}^{r_1^{e}}(k-1 - 2r_1^{\texttt{v}}+i)$ & $ 2{k+2 \choose 2} - 3(C_{10} + C_{11})$  \\
$\dim \mathbb V^{L^2}_{k-1}(\mathcal T_h; \boldsymbol{r}_2)$ & ${r_2^{\texttt{v}}+2 \choose 2}$ & $\sum_{i=0}^{r_2^{e}}(k-2 - 2r_2^{\texttt{v}}+i)$ & ${k+1 \choose 2} - 3(C_{20} + C_{21})$  \\
$C_{0i} - C_{1i} + C_{2i}$ & $1$ & $-1$ & $1$ 
\medskip \\
\bottomrule
\end{tabular}
\label{table:dimfemdeRham2d}
\end{table}%
\end{lemma}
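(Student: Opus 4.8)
The plan is to split the claim into two independent parts. First, I read off the per-dimension coefficients $C_{i0},C_{i1},C_{i2}$ in the table from the geometric decomposition of each of the three finite element spaces. Second, I verify the three alternating identities in the last row, $C_{0i}-C_{1i}+C_{2i}=1,-1,1$, by elementary binomial algebra after substituting the relations $\bs r_0=\bs r_1+1$ and $\bs r_2=\bs r_1-1$.

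For the first part I would invoke the global geometric decomposition established right after Theorem~\ref{thm:Cr2dfemunisolvence}, which expresses each $C^{r^e}$-continuous space as a direct sum of vertex blocks $\mathbb P_k(D(\texttt{v},r^{\texttt{v}}))$, edge blocks $\mathbb P_k(D(e,r^e)\backslash D(\Delta_0(e),r^{\texttt{v}}))$, and interior bubble blocks $\mathbb B_k(\bs r,T)$. The per-vertex count is $|\mathbb T^2_{r^{\texttt{v}}}|=\binom{r^{\texttt{v}}+2}{2}$; the per-edge count is $\sum_{i=0}^{r^e}|\mathbb T^1_{k-2(r^{\texttt{v}}+1)+i}|=\sum_{i=0}^{r^e}(k-2r^{\texttt{v}}-1+i)$ by Lemma~\ref{lem:edgeunisolvence2d}; and the per-triangle count is $\dim\mathbb B_k(\bs r)=\binom{k+2}{2}-3\,(\text{vertex count})-3\,(\text{edge count})$ by Lemma~\ref{lem:geodecomp2d}. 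Substituting the correct degrees $k+1,k,k-1$ and smoothness $\bs r_0,\bs r_1,\bs r_2$, and inserting the factor $2$ for the vector-valued middle space, reproduces every entry of the table line by line.

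For the second part I substitute $\bs r_0=\bs r_1+1$, $\bs r_2=\bs r_1-1$ and compute. The vertex column becomes the second difference $\binom{r_1^{\texttt{v}}+3}{2}-2\binom{r_1^{\texttt{v}}+2}{2}+\binom{r_1^{\texttt{v}}+1}{2}$, which equals $1$. For the edge column, setting $c=k-2r_1^{\texttt{v}}$ turns all three sums into arithmetic progressions whose $c$-dependence cancels in the alternating combination, leaving the constant $-1$. These are one-line computations once the sums are written out.

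The interior column is cleanest to treat last, using the structure $C_{i2}=m_i\binom{k+3-i}{2}-3(C_{i0}+C_{i1})$ with multiplicities $m_0=1,m_1=2,m_2=1$. The alternating sum then splits as
\begin{equation*}
C_{02}-C_{12}+C_{22}=\Big[\binom{k+3}{2}-2\binom{k+2}{2}+\binom{k+1}{2}\Big]-3\big[(C_{00}-C_{10}+C_{20})+(C_{01}-C_{11}+C_{21})\big].
\end{equation*}
The bracketed binomial term is exactly the polynomial de Rham identity~\eqref{eq:dimensionpolyderham}, hence equals $1$, while the second term is $-3(1+(-1))=0$ by the vertex and edge results just obtained, giving $1$. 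I do not anticipate a genuine obstacle: the only real care is bookkeeping---tracking the degree shifts $k+1,k,k-1$, the index ranges in the edge sums, and the vector factor of $2$---so the principal risk is an off-by-one error rather than any conceptual difficulty.
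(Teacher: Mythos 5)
Your proposal is correct and follows essentially the same route as the paper: read the coefficients off the vertex/edge/interior geometric decomposition, check the vertex column as a second difference of binomials, cancel the common linear term $k-1-2r_1^{\texttt{v}}$ in the edge column, and reduce the interior column to the polynomial de Rham dimension identity~\eqref{eq:dimensionpolyderham} together with the first two columns. The only cosmetic difference is your substitution $c=k-2r_1^{\texttt{v}}$ in place of the paper's phrasing ``removing the same $k-1-2r_1^{\texttt{v}}$''.
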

\begin{proof}
The dimension related to  $|\Delta_0(\mathcal T_h)|$ and $C_{00} - C_{10} + C_{20} = 1$, can be verified directly. For the column of  $|\Delta_1(\mathcal T_h)|$, by removing the same $k-1 - 2r_1^{\texttt{v}}$, we compute
$$
C_{01} - C_{11} + C_{21} = \sum_{i=0}^{r_0^e}(i-1)-2\sum_{i=0}^{r_1^e}i + \sum_{i=0}^{r_2^e}(i+1)=-1.
$$
% i.e., $C_{01} - C_{11} + C_{21} = -1$, it can be easily proved by the induction on $r_1^e$. 
With these two identities, the third column is an easy consequence of~\eqref{eq:dimensionpolyderham}.   
\end{proof}

As a corollary, we obtain the following polynomial bubble complex. 
\begin{corollary}
Let $r_1^e\geq 0, r_1^{\texttt{v}}\geq 2 \, r_1^e + 1, k\geq 2 r_1^{\texttt{v}} + 2
$, and let $ \bs r_0 = \bs r_1+1, \bs r_2 = \bs r_1-1$. The polynomial bubble complex
\begin{equation}\label{eq:femderhamcomplex}
% \resizebox{1.0\hsize}{!}{$
0\xrightarrow{\subset} \mathbb B_{k+1}(\boldsymbol{r}_0)\xrightarrow{\curl}\mathbb B_{k}^2(\boldsymbol{r}_1) \xrightarrow{\div} \mathbb B_{k-1}(\boldsymbol{r}_2)\xrightarrow{Q_0}\mathbb R\to 0
% $}
\end{equation}
 is exact, where $Q_0$ is the $L^2$-projection onto the constant space.
\end{corollary}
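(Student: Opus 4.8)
The plan is to show that~\eqref{eq:femderhamcomplex} is a complex, check exactness at the two endpoints by inspection, prove exactness at $\mathbb B_k^2(\boldsymbol r_1)$ from the exactness of the polynomial de Rham complex~\eqref{eq:polyderham}, and then deduce exactness at $\mathbb B_{k-1}(\boldsymbol r_2)$ from the dimension identity in Lemma~\ref{lm:dimension} together with Lemma~\ref{lm:abstract}. First I would record the bookkeeping of vanishing orders: differentiation lowers the vanishing order on each sub-simplex by one, so under the relations $\boldsymbol r_0=\boldsymbol r_1+1$ and $\boldsymbol r_2=\boldsymbol r_1-1$ one checks (via Theorem~\ref{thm:Cr2dfemunisolvence}, whose degree hypotheses follow from $k\geq 2r_1^{\texttt v}+2$) that $\curl$ maps $\mathbb B_{k+1}(\boldsymbol r_0)$ into $\mathbb B_k^2(\boldsymbol r_1)$ and $\div$ maps $\mathbb B_k^2(\boldsymbol r_1)$ into $\mathbb B_{k-1}(\boldsymbol r_2)$; together with $\div\curl=0$ and $\int_T\div\boldsymbol v=\int_{\partial T}\boldsymbol v\cdot\boldsymbol n=0$ for a boundary-vanishing bubble $\boldsymbol v$, this makes~\eqref{eq:femderhamcomplex} a complex. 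Exactness at $\mathbb B_{k+1}(\boldsymbol r_0)$ is immediate since $\ker\curl$ consists of constants and the only constant bubble is $0$; exactness at $\mathbb R$ is immediate because $\mathbb B_{k-1}(\boldsymbol r_2)$ is nontrivial and contains a Bernstein monomial $\lambda^\alpha$, $\alpha\in S_2(T)$, which is nonnegative and not identically zero, hence has positive integral over $T$, so $Q_0$ is surjective.

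The crux is exactness at $\mathbb B_k^2(\boldsymbol r_1)$, i.e. $\mathbb B_k^2(\boldsymbol r_1)\cap\ker\div=\curl\,\mathbb B_{k+1}(\boldsymbol r_0)$. Given $\boldsymbol v\in\mathbb B_k^2(\boldsymbol r_1)$ with $\div\boldsymbol v=0$, the exactness of~\eqref{eq:polyderham} yields $w\in\mathbb P_{k+1}(T)$, unique up to an additive constant, with $\curl w=\boldsymbol v$. Since $\grad w$ is a $90^\circ$ rotation of $\boldsymbol v$, it inherits the vanishing orders of $\boldsymbol v$: order $r_1^e+1$ on each edge and order $r_1^{\texttt v}+1$ at each vertex. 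As $\grad w$ then vanishes on the connected boundary $\partial T$, $w$ is constant there; subtracting that constant makes $w$ vanish on $\partial T$, whence $w$ vanishes to order $r_0^e+1=r_1^e+2$ on each edge and to order $r_0^{\texttt v}+1=r_1^{\texttt v}+2$ at each vertex. Consequently all boundary DoFs~\eqref{eq:C12d0}--\eqref{eq:C12d1} for the pair $(\boldsymbol r_0,k+1)$ vanish, and Theorem~\ref{thm:Cr2dfemunisolvence} (applicable since $k+1\geq 2r_0^{\texttt v}+1$) gives $w\in\mathbb B_{k+1}(\boldsymbol r_0)$. This proves the nontrivial inclusion, the reverse one being the complex property already established.

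To assemble the pieces I would set $\mathcal V_0:=\mathbb B_k^2(\boldsymbol r_1)\cap\ker\div$, which by the crux equals $\curl\,\mathbb B_{k+1}(\boldsymbol r_0)$ and has dimension $\dim\mathbb B_{k+1}(\boldsymbol r_0)$ because $\curl$ is injective on bubbles. Then Lemma~\ref{lm:abstract} applies to $\mathcal V_0\xrightarrow{\subset}\mathbb B_k^2(\boldsymbol r_1)\xrightarrow{\div}\mathbb B_{k-1}(\boldsymbol r_2)\xrightarrow{Q_0}\mathbb R\to 0$ with $\dd_1=\div$, $\dd_2=Q_0$: its hypothesis $\mathcal V_0=\mathcal V_1\cap\ker(\dd_1)$ holds by construction, the dimension identity~\eqref{eq:abstractdimenidentity} is exactly the $|\Delta_2(\mathcal T_h)|$ column of Lemma~\ref{lm:dimension}, namely $\dim\mathbb B_{k+1}(\boldsymbol r_0)-\dim\mathbb B_k^2(\boldsymbol r_1)+\dim\mathbb B_{k-1}(\boldsymbol r_2)=1$, and the already-verified surjectivity $Q_0\,\mathbb B_{k-1}(\boldsymbol r_2)=\mathbb R$ supplies the condition $\dd_2\mathcal V_2=\mathcal V_3$. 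The lemma then forces $\dd_1\mathcal V_1=\mathcal V_2\cap\ker(\dd_2)$, i.e. exactness at $\mathbb B_{k-1}(\boldsymbol r_2)$, completing the proof.

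The main obstacle is the crux paragraph: lifting the scalar potential $w$ so that its boundary vanishing orders match $\boldsymbol r_0$ exactly. The delicate point is raising the vertex order all the way to $r_0^{\texttt v}+1=r_1^{\texttt v}+2$ and landing in the correct bubble space, and it is precisely here that the smoothness relations $\boldsymbol r_0=\boldsymbol r_1+1$, $\boldsymbol r_2=\boldsymbol r_1-1$ and the bounds $r_1^{\texttt v}\geq 2r_1^e+1$, $k\geq 2r_1^{\texttt v}+2$ are used to make Theorem~\ref{thm:Cr2dfemunisolvence} and the polynomial exactness~\eqref{eq:polyderham} simultaneously available.
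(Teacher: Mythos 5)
Your proposal is correct and follows essentially the same route as the paper: establish the middle exactness $\mathbb B_k^2(\boldsymbol{r}_1)\cap\ker(\div)=\curl\mathbb B_{k+1}(\boldsymbol{r}_0)$, then combine the dimension identity from the last column of the table in Lemma~\ref{lm:dimension} with Lemma~\ref{lm:abstract}. The only difference is that you spell out in detail the potential-lifting and vanishing-order argument for the middle exactness, which the paper dismisses as ``clear,'' and you explicitly verify the surjectivity of $Q_0$.
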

\begin{proof}
Clearly we have $\curl\mathbb B_{k+1}(\boldsymbol{r}_0)\subseteq\mathbb B_k^2(\boldsymbol{r}_1)\cap\ker(\div)$. For $\bs v\in\mathbb B_k^2(\boldsymbol{r}_1)\cap\ker(\div)$, apply complex \eqref{eq:polyderham} to get $\bs v=\curl q$ with $q\in\mathbb P_{k+1}(T)$. As $\curl q\in\mathbb B_k^2(\boldsymbol{r}_1)$, we have $(\curl q)|_{\partial T}=0$, which means $q|_{\partial T}$ is constant.
Hence by subtracting a constant, we can choose $q\in\mathbb P_{k+1}(T)$ to satisfy $q|_{\partial T}=0$, as a result $q\in\mathbb B_{k+1}(\boldsymbol{r}_0)$. This proves $\curl\mathbb B_{k+1}(\boldsymbol{r}_0)=\mathbb B_k^2(\boldsymbol{r}_1)\cap\ker(\div)$.

Thanks to the last column of the table in Lemma~\ref{lm:dimension},
\begin{align*}%\label{eq:bubblesdim2d0}
\dim\mathbb B_{k+1}(\boldsymbol{r}_0) - \dim\mathbb B_k^2(\boldsymbol{r}_1) +\dim\mathbb B_{k-1}(\boldsymbol{r}_2)-1 = 0, 
%\\
%&=\dim\curl\mathbb B_{k+1}(\boldsymbol{r}_0)+\dim\mathbb B_{k-1}(\boldsymbol{r}_2)-1 \\
%&=\dim\big(\mathbb B_k^2(\boldsymbol{r}_1)\cap\ker(\div)\big)+\dim\mathbb B_{k-1}(\boldsymbol{r}_2)-1.
\end{align*}
%Hence $\dim\div\mathbb B_k^2(\boldsymbol{r}_1)=\dim\mathbb B_{k-1}(\boldsymbol{r}_2)-1$, 
which together with Lemma~\ref{lm:abstract} concludes the exactness of bubble complex~\eqref{eq:femderhamcomplex}.
\end{proof}

% As a result of the bubble complex~\eqref{eq:femderhamcomplex}, when defining the space $\mathbb V^{\div}_{k}(\mathcal T_h; \boldsymbol{r}_1)$, by Lemma 2.1 in~\cite{Chen;Huang:2021divFinite} (see also Lemma 4.10 in~\cite{Chen;Huang:2021Finite}), DoF~\eqref{eq:Crdivfemdof2d2} can be replaced by
% \begin{align*}
% \int_T \div \boldsymbol{v}\, q \dx, &\quad q\in \ \mathbb B_{k-1}(\bs r_2)/\mathbb R, T\in\mathcal T_h, \\
% \int_T \boldsymbol{v}\cdot\curl q\dx, &\quad q\in \ \mathbb B_{k+1}(\bs r_0), T\in\mathcal T_h.
% \end{align*}

\begin{theorem}\label{lm:femderhamcomplex}
Let $r_1^e\geq 0, r_1^{\texttt{v}}\geq 2 \, r_1^e + 1, k\geq 2 r_1^{\texttt{v}} + 2$, and let $ \bs r_0 = \bs r_1+1, \bs r_2 = \bs r_1-1$. The finite element complex
\begin{equation}\label{eq:femderhamcomplex1}
% \resizebox{1.0\hsize}{!}{$
\mathbb R\xrightarrow{\subset} \mathbb V^{\curl}_{k+1}(\mathcal T_h; \boldsymbol{r}_0)\xrightarrow{\curl}\mathbb V^{\div}_{k}(\mathcal T_h; \boldsymbol{r}_1) \xrightarrow{\div} \mathbb V^{L^2}_{k-1}(\mathcal T_h; \boldsymbol{r}_2)\xrightarrow{}0
% $}
\end{equation}
is exact.
\end{theorem}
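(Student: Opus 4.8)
The plan is to verify the hypotheses of the abstract exactness criterion (Lemma~\ref{lm:abstract}) for the complex~\eqref{eq:femderhamcomplex1}, reducing the global exactness to a dimension identity together with a single surjectivity (or kernel-characterization) statement. The three ingredients I would check are: (i) that~\eqref{eq:femderhamcomplex1} is indeed a complex, i.e.\ $\curl$ maps into $\ker(\div)$ and $\div$ maps into $\mathbb V^{L^2}_{k-1}(\mathcal T_h;\bs r_2)$; (ii) that $\mathbb R = \mathbb V^{\curl}_{k+1}(\mathcal T_h;\bs r_0)\cap\ker(\curl)$; and (iii) the Euler-characteristic identity $\dim\mathbb V^{\curl}_{k+1}-\dim\mathbb V^{\div}_{k}+\dim\mathbb V^{L^2}_{k-1}=\dim\mathbb R=1$. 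Item (iii) is immediate from the last row of the table in Lemma~\ref{lm:dimension}: summing $C_{0i}-C_{1i}+C_{2i}$ against $|\Delta_i(\mathcal T_h)|$ gives exactly $|\Delta_0(\mathcal T_h)|-|\Delta_1(\mathcal T_h)|+|\Delta_2(\mathcal T_h)|=\chi(\Omega)=1$ for a contractible (simply connected) triangulated domain, which matches $\dim\mathbb R$. Item (ii) holds because a single-valued globally continuous scalar field whose $\curl$ vanishes is locally constant, hence constant on the connected domain.

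**The core of the argument** is then the surjectivity of the last map, $\div\,\mathbb V^{\div}_{k}(\mathcal T_h;\bs r_1)=\mathbb V^{L^2}_{k-1}(\mathcal T_h;\bs r_2)$, which by Lemma~\ref{lm:abstract} is equivalent (given the dimension count) to $\curl\,\mathbb V^{\curl}_{k+1}=\mathbb V^{\div}_{k}\cap\ker(\div)$; I only need one of them. I would establish surjectivity of $\div$ by a local-to-global patching argument built on the bubble complex~\eqref{eq:femderhamcomplex}. The strategy is: given a target $w\in\mathbb V^{L^2}_{k-1}(\mathcal T_h;\bs r_2)$, first solve the surjectivity at the level of the DoFs on vertices and edges, then correct the interior by the exactness of the polynomial bubble complex. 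Concretely, using the bubble complex one knows $\div:\mathbb B_k^2(\bs r_1)\to\mathbb B_{k-1}(\bs r_2)/\mathbb R$ is onto (this is exactly the reformulation of DoF~\eqref{eq:Crdivfemdof2d2} noted after the Corollary); the reduced DoFs $\int_T\div\bs v\,q\dx$ for $q\in\mathbb B_{k-1}(\bs r_2)/\mathbb R$ let one match the interior moments of $w$ element by element, while the vertex and edge DoFs of the divergence are matched by choosing $\bs v$ appropriately on the skeleton. The single-valued continuity constraints of $\mathbb V^{\div}_{k}$ at the lower-dimensional simplices are precisely compatible with those of $\mathbb V^{L^2}_{k-1}$ because of the decay relation $\bs r_2=\bs r_1-1$, so the local divergences glue to a globally admissible element of $\mathbb V^{L^2}_{k-1}(\mathcal T_h;\bs r_2)$.

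**The main obstacle** I anticipate is the surjectivity onto the vertex- and edge-supported parts of $\mathbb V^{L^2}_{k-1}(\mathcal T_h;\bs r_2)$, rather than the interior part. Matching interior moments is clean because it is purely local and handled by the bubble complex; the delicate point is producing an element $\bs v\in\mathbb V^{\div}_{k}(\mathcal T_h;\bs r_1)$ whose divergence realizes prescribed derivative values $\nabla^i(\div\bs v)$ at vertices (up to order $r_2^{\texttt{v}}=r_1^{\texttt{v}}-1$) and prescribed normal-derivative moments $\int_e\partial_n^i(\div\bs v)\,q\dd s$ on edges, consistently across shared simplices. Here one must verify that the chain of DoFs of $\bs v$ at a vertex determines $\div\bs v$ and its derivatives there with enough freedom, and that the edge-continuity of $\bs v$ transfers correctly to edge-continuity of $\div\bs v$; the precise bookkeeping that the numbers work out is exactly what the shifted degree ($k$ versus $k-1$) and shifted smoothness ($\bs r_1$ versus $\bs r_2=\bs r_1-1$) are designed to guarantee, but verifying it rigorously requires tracking the local-dimension balance at each simplex star. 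Once this local surjectivity is secured, the global statement follows by the standard argument that a globally-conforming preimage exists whenever the local preimages are compatible on overlaps, and Lemma~\ref{lm:abstract} upgrades surjectivity of $\div$ together with the dimension identity~\eqref{eq:abstractdimenidentity} to full exactness of~\eqref{eq:femderhamcomplex1}.
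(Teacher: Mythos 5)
Your framework is exactly the paper's: verify the complex property, the kernel condition $\mathbb R=\mathbb V^{\curl}_{k+1}\cap\ker(\curl)$, and the alternating dimension identity via Lemma~\ref{lm:dimension} and Euler's formula, then invoke Lemma~\ref{lm:abstract}. Those three ingredients are fine. The gap is in the one remaining ingredient you must supply: Lemma~\ref{lm:abstract} needs \emph{either} $\div\,\mathbb V^{\div}_{k}(\bs r_1)=\mathbb V^{L^2}_{k-1}(\bs r_2)$ \emph{or} $\curl\,\mathbb V^{\curl}_{k+1}(\bs r_0)=\mathbb V^{\div}_{k}(\bs r_1)\cap\ker(\div)$, and you chose to prove the former by a local-to-global patching argument that you never actually complete. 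Your own text concedes this: matching the vertex values $\nabla^i(\div\bs v)(\texttt{v})$ and the edge moments of $\partial_n^i(\div\bs v)$ "consistently across shared simplices," and the claim that "local preimages glue whenever they are compatible on overlaps," is precisely the content that needs proof, not a routine consequence. A collection of elementwise preimages of $w$ need not assemble into a single-valued element of $\mathbb V^{\div}_{k}(\bs r_1)$; one would have to construct a global interpolant by prescribing the DoFs~\eqref{eq:Crdivfemdof2d0}--\eqref{eq:Crdivfemdof2d2} so that $\div\bs v$ reproduces the vertex, edge, and interior data of $w$, tracking identities such as $\partial_n^i(\div\bs v)=\partial_n^{i+1}(\bs v\cdot\bs n)+\partial_t\partial_n^i(\bs v\cdot\bs t)$ on each edge and the compatibility $\int_\Omega\div\bs v\dx=\int_{\partial\Omega}\bs v\cdot\bs n\dd s$ for the constant mode. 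None of that bookkeeping appears in the proposal, so the decisive step is missing.

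The fix is to take the other branch of Lemma~\ref{lm:abstract}, which is what the paper does and which is essentially immediate in two dimensions: if $\bs v\in\mathbb V^{\div}_{k}(\bs r_1)\cap\ker(\div)$, then on each triangle $\bs v|_T=\curl p_T$ with $p_T\in\mathbb P_{k+1}(T)$; since $\curl$ is the rotated gradient, $\bs r_1\geq 0$ makes $\bs v$ globally continuous, and on a simply connected $\Omega$ the local potentials patch into a single function whose derivatives gain exactly one order of smoothness at vertices and edges, i.e.\ an element of $\mathbb V^{\curl}_{k+1}(\bs r_0)$ with $\bs r_0=\bs r_1+1$. That one observation, combined with your items (i)--(iii), closes the proof; the surjectivity of $\div$ then comes out for free as a corollary of Lemma~\ref{lm:abstract} rather than being the thing you must establish by hand.
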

\begin{proof}
By construction~\eqref{eq:femderhamcomplex1} is a complex, and  
$$
\curl\mathbb V^{\curl}_{k+1}(\mathcal T_h; \boldsymbol{r}_0)=\mathbb V^{\div}_{k}(\mathcal T_h; \boldsymbol{r}_1)\cap\ker(\div).
$$
By Lemma~\ref{lm:dimension} and the Euler's formula,
\begin{align*}
& \quad\; 1 - \dim\mathbb V^{\curl}_{k+1}(\mathcal T_h; \boldsymbol{r}_0) + \dim\mathbb V^{\div}_{k}(\mathcal T_h; \boldsymbol{r}_1) - \dim\mathbb V^{L^2}_{k-1}(\mathcal T_h; \boldsymbol{r}_2) \\
&=  1 - |\Delta_0(\mathcal T_h)| + |\Delta_1(\mathcal T_h)| - |\Delta_2(\mathcal T_h)| = 0.
%\\
%&=\dim\mathbb V^{\curl}_{k+1}(\mathcal T_h; \boldsymbol{r}_0)+\dim\mathbb V^{L^2}_{k-1}(\mathcal T_h; \boldsymbol{r}_2)-1.
\end{align*}
%Then
%\begin{align*}
%\dim\div\mathbb V^{\div}_{k}(\mathcal T_h; \boldsymbol{r}_1)
%&=\dim\mathbb V^{\div}_{k}(\mathcal T_h; \boldsymbol{r}_1)-\dim\curl\mathbb V^{\curl}_{k+1}(\mathcal T_h; \boldsymbol{r}_0) \\
%&=\dim\mathbb V^{L^2}_{k-1}(\mathcal T_h; \boldsymbol{r}_2).   
%\end{align*}
Therefore the exactness of complex~\eqref{eq:femderhamcomplex1} follows from Lemma~\ref{lm:abstract}.
\end{proof}

\begin{example}\rm
The two-dimensional finite element de Rham complexes constructed by Falk and Neilan~\cite{FalkNeilan2013} correspond to the case $r_1^e = 0, r_1^{\texttt{v}} = 1$, and $k \geq 4$:
%The lowest degree example is: $r_1^e = 0, r_1^{\texttt{v}} = 1$, and $k = 4$, and the complex is
\begin{equation*}%\label{eq:argyrishermite}
% \resizebox{1.0\hsize}{!}{$
\mathbb R\xrightarrow{\subset} {\rm Argy}_{k+1}( 
\begin{pmatrix}
2\\
1 
\end{pmatrix}
)\xrightarrow{\curl}{\rm Herm}_{k}( 
\begin{pmatrix}
1\\
0 
\end{pmatrix}
) \xrightarrow{\div} \mathbb V^{L^2}_{k-1}( 
\begin{pmatrix}
 0\\
 -1
\end{pmatrix}
)\xrightarrow{}0.
% $}
\end{equation*}
To fit the space, we skip $\mathcal T_h$ in the notation and write $\bs r_i = 
\begin{pmatrix}
 r_i^{\texttt{v}}\\
 r_i^e
\end{pmatrix}
$ as column vectors. 
The vector element is $C^0$ and thus the previous finite element is $C^1$ for which the lowest degree is the Argyris element with shape function space $\mathbb P_5$. The last one is discontinuous $\mathbb P_3$ but continuous at vertices. If we want to use a continuous element for the pressure, i.e., $\bs r_2 = (2,0)$, then $\bs r_1 = (3,1), \bs r_0 = (4,2)$ and $k\geq 8$, which may find an application in the strain gradient elasticity problem~\cite{ChenHuangHuang2023,LiaoMingXu2021taylorhood}. Later on, we will relax the relation $\bs r_2 = \bs r_1-1$ and construct relative low degree Stokes pair with continuous pressure elements. 
\end{example}

Notice that the pair $\bs r_1 = (0,0)$ and $\bs r_2 = (-1,-1)$ are not allowed since $\bs r_0 = \bs r_1+1 = (1,1)$ cannot define a $C^1$ element. Indeed the div stability for Stokes pair ${\rm Lagrange}_{k} - {\rm DG}_{k-1}$ is more subtle and not covered in our framework.
%\LC{Add a remark on the Stokes pair? Is it a simple consequence by adding boundary condition to the space.}

%\begin{remark}\rm
%Since $\dim\big(\mathbb B_{k-1}(\bs r_2)/\mathbb R\big)=\dim\mathbb B_{k-1}(\bs r_2)-1$,
%DoF~\eqref{eq:CrL2femdof2d2} can be replaced by
%\begin{align*}
%\int_T v\, q \dx, &\quad q\in \ \mathbb B_{k-1}(\bs r_2)/\mathbb R, T\in\mathcal T_h, \\
%\int_T v \dx, &\quad T\in\mathcal T_h.
%\end{align*}
%\end{remark}

%We can use the finite element pair $\mathbb V^{\div}_{k}(\mathcal T_h; \begin{pmatrix}
%1\\
%0 
%\end{pmatrix})\times\mathbb V^{L^2}_{k-1}(\mathcal T_h; \begin{pmatrix}
%0\\
%-1 
%\end{pmatrix})$ with $k\geq 4$ to discretize the Stokes problem, and the finite element pair $\mathbb V^{\div}_{k}(\mathcal T_h; \begin{pmatrix}
%3\\
%1 
%\end{pmatrix})\times\mathbb V^{L^2}_{k-1}(\mathcal T_h; \begin{pmatrix}
%2\\
%0 
%\end{pmatrix})$  with $k\geq 8$ to discretize the strain gradient elasticity problem~\cite{LiaoMingXu2021taylorhood}.
% \LC{Check the relation to the Falk-Neilan Stokes pair.}

\subsection{Normal continuous finite elements for vector functions}
%In this section, consider the case $r_1^e = -1$. Then it is the standard de Rham complex adding vertex smoothness. 
We continue to consider the case $r_1^e = -1$ and the smoothness on edges are fixed by:
$$
r_0^e = 0, \quad r_1^e = -1, \quad r_2^e = -1.
$$ 
The constraints on the vertex smoothness are
$$
r_0^{\texttt{v}} = r_1^{\texttt{v}} + 1, \quad r_1^{\texttt{v}} \geq -1, \quad r_2^{\texttt{v}} = \max\{ r_1^{\texttt{v}}-1, -1\}.
$$

The finite element spaces for scalar functions $\mathbb V^{\curl}_{k+1}(\mathcal T_h; \boldsymbol{r}_0) $ and $ \mathbb V^{L^2}_{k-1}(\mathcal T_h; \boldsymbol{r}_2)$ remain unchanged. 
%
%
%
%Given $\boldsymbol{r}_1=\begin{pmatrix}r_1^{\texttt{v}} \\ r_1^{e}\end{pmatrix}, \boldsymbol{r}_2=\begin{pmatrix}r_2^{\texttt{v}} \\ r_2^{e}\end{pmatrix}\in \mathbb N_{-1}^2$ such that
%Set $\boldsymbol{r}_0=\begin{pmatrix}r_0^{\texttt{v}} \\ r_0^{e}\end{pmatrix}:=\boldsymbol{r}_1+1\geq0$. 
%
We need to define the finite element space for $\boldsymbol{H}(\div,\Omega)$ with parameters $\bs r_1 = (r_1^{\texttt{v}}, -1)$. The vector function is not continuous on edges. But to be $H(\div)$-conforming, the normal component should be continuous. A refined notation for the smoothness parameter would be $\bs r_1 = \begin{pmatrix}
 r_1^{\texttt{v}}\\
-1,0
\end{pmatrix}
$
where the tangential component is $-1$ (discontinuous) and the normal component is $0$ (continuous);
notation $\begin{pmatrix}
 r_1^{\texttt{v}}\\
\frac{1}{2}
\end{pmatrix}$ is adopted in~\cite{huConstructionConformingFinite2021}. To simplify notation, we still use the simplified form $\bs r_1 = (r_1^{\texttt{v}}, -1)$ and understand that $r_1^e=-1$ for $\mathbb V^{\div}_{k}(\mathcal T_h; \boldsymbol{r}_1)$ space means the normal continuity. 
%So in the following DoFs, we separate the normal and tangential components. 

Take $\mathbb P_{k}(T;\mathbb R^2)$ with $k\geq \max\{2r_1^{\texttt{v}} + 2, 1\}$ as the space of shape functions. For $r_1^{\texttt{v}}\geq 0$, the DoFs are
\begin{subequations}\label{eq:divfemdof}
\begin{align}
\nabla^i\boldsymbol{v}(\texttt{v}), & \quad \texttt{v}\in \Delta_{0}(T), i=0,\ldots, r_1^{\texttt{v}}, \label{eq:divfemdof0}\\
\int_e \boldsymbol{v}\cdot\boldsymbol{n} \, q\dd s, &\quad  \boldsymbol{q}\in \mathbb P_{k- 2(r_1^{\texttt{v}} +1)} (e), e\in \Delta_{1}(T),\label{eq:divfemdof1}\\
\int_e \boldsymbol{v}\cdot\boldsymbol{t} \, q\dd s, &\quad  \boldsymbol{q}\in \mathbb P_{k- 2( r_1^{\texttt{v}} +1)} (e), e\in \Delta_{1}(T),\label{eq:divfemdof2}\\
\int_T \boldsymbol{v}\cdot\boldsymbol{q} \dx, &\quad \boldsymbol{q}\in \mathbb B_{k}^2((r_1^{\texttt{v}}, 0)). \label{eq:divfemdof3}
\end{align}
\end{subequations}
Although $r_1^e= -1$, we still use $\mathbb B_k((r_1^{\texttt{v}}, 0))$ not $\mathbb B_k((r_1^{\texttt{v}}, -1))$ as the interior moments so that we can have DoFs~\eqref{eq:divfemdof1}-\eqref{eq:divfemdof2} on edges. Namely locally we use the vector Hermite-type element with parameter $(r_1^{\texttt{v}}, 0)$. When defining the global $H(\div)$-conforming finite element space, the tangential component~\eqref{eq:divfemdof2} is considered as local, i.e., double valued on interior edges.

When $r_1^{\texttt{v}}= -1$, there is no DoFs on vertices and DoFs are
\begin{subequations}\label{eq:BDMdof}
\begin{align}
\int_e \boldsymbol{v}\cdot\boldsymbol{n} \, q\dd s, &\quad  \boldsymbol{q}\in \mathbb P_{k} (e), e\in \Delta_{1}(T),\label{eq:BDMdof1}\\
\int_e \boldsymbol{v}\cdot\boldsymbol{t} \, q\dd s, &\quad  \boldsymbol{q}\in \mathbb P_{k - 2} (e), e\in \Delta_{1}(T),\label{eq:BDMdof2}\\
\int_T \boldsymbol{v}\cdot\boldsymbol{q} \dx, &\quad \boldsymbol{q}\in \mathbb B_k^2((0, 0)). \label{eq:BDMdof3}
\end{align}
\end{subequations}
The normal component is the full degree polynomial $\mathbb P_k(e)$ but the tangential component is corresponding to the edge bubble $b_e \mathbb P_{k-2}(e)$. The interior moments become $\mathbb B_k((0, 0))$. Locally we use vector Lagrange finite element. At each edge, we use $t-n$ (tangential-normal) coordinate and at a vertex we use the coordinate formed by the two normal direction of two edges containing that vertex and merge into \eqref{eq:BDMdof1}. Then the uni-solvence in one triangle follows from that of vector Lagrange elements.

Define the global $H(\div)$-conforming finite element space
\begin{align*}
\mathbb V^{\div}_{k}(\mathcal T_h; (r_1^{\texttt{v}},-1)) = \{\boldsymbol{v}\in\boldsymbol{L}^2(\Omega;\mathbb R^2):&\, \boldsymbol{v}|_T\in\mathbb P_{k}(T;\mathbb R^2)\;\forall~T\in\mathcal T_h, \\
&\textrm{ all the DoFs~\eqref{eq:divfemdof0}-\eqref{eq:divfemdof1} are single-valued} \},
\end{align*}
for $r_1^{\texttt{v}}\geq0$, and
\begin{align*}
\mathbb V^{\div}_{k}(\mathcal T_h; (-1,-1)) = \{\boldsymbol{v}\in\boldsymbol{L}^2(\Omega;\mathbb R^2):&\, \boldsymbol{v}|_T\in\mathbb P_{k}(T;\mathbb R^2)\;\forall~T\in\mathcal T_h, \\
&\quad\textrm{ DoF~\eqref{eq:BDMdof1} is single-valued} \},
\end{align*}
where the tangential component~\eqref{eq:divfemdof2} and~\eqref{eq:BDMdof2} are considered as local and may be double-valued for each interior edge. 
%For both $r_1^{\texttt{v}}\geq 0$ and $r_1^{\texttt{v}}= - 1$, we have
%\begin{align*} 
%\dim\mathbb V^{\div}_{k}(\mathcal T_h; (r_1^{\texttt{v}},-1))&=|\Delta_2(\mathcal T_h)| \big(k^2-1-3r_1^{\texttt{v}}(r_1^{\texttt{v}}+1)\big) \\
%&\quad+|\Delta_1(\mathcal T_h)|(k-1-2r_1^{\texttt{v}})  +|\Delta_0(\mathcal T_h)|(r_1^{\texttt{v}}+1)(r_1^{\texttt{v}}+2).   
%\end{align*}

\begin{theorem}
Assume parameters $\bs r_0, \bs r_1, \bs r_2$ satisfy
\begin{align*}
& r_0^{\texttt{v}} = r_1^{\texttt{v}} + 1, \quad r_1^{\texttt{v}} \geq -1, \quad r_2^{\texttt{v}} = \max\{ r_1^{\texttt{v}}, 0\}-1,\\
& r_0^e = 0, \quad r_1^e = -1, \quad r_2^e = -1.
\end{align*}
Let $k\geq \max\{2r_1^{\texttt{v}}+2, 1\}$. The finite element complex
\begin{equation}\label{eq:femdivderhamcomplex}
% \resizebox{1.0\hsize}{!}{$
\mathbb R\xrightarrow{\subset} \mathbb V^{\curl}_{k+1}(\mathcal T_h; \boldsymbol{r}_0)\xrightarrow{\curl}\mathbb V^{\div}_{k}(\mathcal T_h; \boldsymbol{r}_1) \xrightarrow{\div} \mathbb V^{L^2}_{k-1}(\mathcal T_h; \boldsymbol{r}_2)\xrightarrow{}0
% $}
\end{equation}
 is exact.
\end{theorem}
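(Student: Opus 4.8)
The plan is to verify the four hypotheses of Lemma~\ref{lm:abstract} for the sequence~\eqref{eq:femdivderhamcomplex}, with $\mathcal V_0=\mathbb R$, $\mathcal V_1=\mathbb V^{\curl}_{k+1}(\mathcal T_h;\bs r_0)$, $\mathcal V_2=\mathbb V^{\div}_{k}(\mathcal T_h;\bs r_1)$, and $\mathcal V_3=\mathbb V^{L^2}_{k-1}(\mathcal T_h;\bs r_2)$. First I would check that~\eqref{eq:femdivderhamcomplex} is a complex, i.e. $\curl\mathbb V^{\curl}_{k+1}\subseteq\mathbb V^{\div}_{k}$ and $\div\mathbb V^{\div}_{k}\subseteq\mathbb V^{L^2}_{k-1}$; this is where the parameter relations are used. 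For $\phi\in\mathbb V^{\curl}_{k+1}$ the normal trace $\curl\phi\cdot\bs n=\partial_t\phi$ is single-valued because $\phi$ is $C^0$, and $\nabla^j(\curl\phi)(\texttt{v})$ is controlled by $\nabla^{j+1}\phi(\texttt{v})$, which is single-valued for $j\le r_0^{\texttt{v}}-1=r_1^{\texttt{v}}$; likewise $\nabla^j(\div\bs v)(\texttt{v})$ is controlled by $\nabla^{j+1}\bs v(\texttt{v})$, single-valued for $j\le r_1^{\texttt{v}}-1$, consistent with $r_2^{\texttt{v}}=\max\{r_1^{\texttt{v}},0\}-1$. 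The injectivity hypothesis $\mathbb R=\mathbb V^{\curl}_{k+1}\cap\ker\curl$ is immediate, since $\mathbb V^{\curl}_{k+1}$ is $C^0$ on the connected domain $\Omega$ and $\curl\phi=0$ forces $\phi$ to be globally constant.

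Next I would establish the first exactness relation $\curl\mathbb V^{\curl}_{k+1}=\mathbb V^{\div}_{k}\cap\ker\div$, which together with the dimension identity suffices by Lemma~\ref{lm:abstract}. Given $\bs v\in\mathbb V^{\div}_{k}$ with $\div\bs v=0$, I use that $\bs v\in\boldsymbol H(\div,\Omega)$ (the normal continuity built into the space) and the exactness of the continuous de Rham complex~\eqref{eq:deRham} on the simply connected $\Omega$ to produce $\phi\in H^{1}(\Omega)$ with $\curl\phi=\bs v$. On each $T$ the polynomial de Rham complex~\eqref{eq:polyderham} forces $\phi|_T\in\mathbb P_{k+1}(T)$; being $H^1$ and piecewise polynomial, $\phi$ is $C^0$, and since $\grad\phi$ is a rotation of $\bs v$, the vertex derivatives $\nabla^i\phi(\texttt{v})$ for $1\le i\le r_0^{\texttt{v}}$ inherit single-valuedness from $\nabla^{i-1}\bs v(\texttt{v})$. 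Hence $\phi\in\mathbb V^{\curl}_{k+1}$ and $\bs v\in\curl\mathbb V^{\curl}_{k+1}$, the reverse inclusion being the complex property already checked.

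It remains to verify the dimension identity~\eqref{eq:abstractdimenidentity}, namely $1-\dim\mathbb V^{\curl}_{k+1}+\dim\mathbb V^{\div}_{k}-\dim\mathbb V^{L^2}_{k-1}=0$. I would write each dimension through its geometric decomposition as $C_{i0}|\Delta_0(\mathcal T_h)|+C_{i1}|\Delta_1(\mathcal T_h)|+C_{i2}|\Delta_2(\mathcal T_h)|$, as in Lemma~\ref{lm:dimension}, and prove the three column identities $C_{0j}-C_{1j}+C_{2j}=1,-1,1$ for $j=0,1,2$. The $|\Delta_0|$ and $|\Delta_1|$ columns come from a direct count of the vertex and edge degrees of freedom, and the $|\Delta_2|$ column then follows from the polynomial de Rham dimension count~\eqref{eq:dimensionpolyderham}. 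Granting these, $1-\dim\mathbb V^{\curl}_{k+1}+\dim\mathbb V^{\div}_{k}-\dim\mathbb V^{L^2}_{k-1}=1-(|\Delta_0|-|\Delta_1|+|\Delta_2|)=0$ by Euler's formula, and Lemma~\ref{lm:abstract} yields exactness.

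The main obstacle is the bookkeeping of the $|\Delta_1|$ and $|\Delta_2|$ columns for $\mathbb V^{\div}_{k}$: unlike the fully $C^0$ case of Lemma~\ref{lm:dimension}, only the normal edge moments~\eqref{eq:divfemdof1} (or~\eqref{eq:BDMdof1} when $r_1^{\texttt{v}}=-1$) are single-valued and contribute to $|\Delta_1|$, whereas the tangential moments~\eqref{eq:divfemdof2} (or~\eqref{eq:BDMdof2}) are local and must be folded into the $|\Delta_2|$ coefficient. One must also handle the degenerate ranges $r_1^{\texttt{v}}\in\{-1,0\}$, where the vertex degrees of freedom disappear and $r_2^{\texttt{v}}=\max\{r_1^{\texttt{v}},0\}-1$ alters the structure of $\mathbb V^{L^2}_{k-1}$; verifying that the three column identities persist across these cases is the most delicate step.
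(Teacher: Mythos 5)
Your proposal is correct and follows essentially the same route as the paper: check the complex property, establish $\curl\mathbb V^{\curl}_{k+1}=\mathbb V^{\div}_{k}\cap\ker(\div)$, then verify the alternating dimension identity columnwise (with the key adjustment that only the normal edge moments of $\mathbb V^{\div}_k$ are single-valued, so $C_{11}=k-1-2r_1^{\texttt{v}}$ and $C_{21}=0$) and conclude via Euler's formula and Lemma~\ref{lm:abstract}. You supply more detail than the paper on the surjectivity-onto-the-kernel step and correctly flag the degenerate cases $r_1^{\texttt{v}}\in\{-1,0\}$, but these are elaborations of the same argument rather than a different one.
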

\begin{proof}
Apparently~\eqref{eq:femdivderhamcomplex} is a complex, and  
$$
\curl\mathbb V^{\curl}_{k+1}(\mathcal T_h; \boldsymbol{r}_0)=\mathbb V^{\div}_{k}(\mathcal T_h; \boldsymbol{r}_1)\cap\ker(\div).
$$
Then we count the dimension. The dimension count in Lemma~\ref{lm:dimension} is still valid except $C_{11}= k - 1 - 2r_1^{\texttt{v}}$. As $C_{01} = k-2r_0^{\texttt{v}}$ and $C_{21} = 0$, the identity $C_{01} - C_{11} + C_{21} = -1$ still holds. 
%\begin{align*}
%\dim\div\mathbb V^{\div}_{k}(\mathcal T_h; \boldsymbol{r}_1)&=\dim\mathbb V^{\div}_{k}(\mathcal T_h; \boldsymbol{r}_1)-\dim\mathbb V^{\curl}_{k+1}(\mathcal T_h; \boldsymbol{r}_0)+1\\
%&=\frac{1}{2}\big(k(k+1)-3r_1^{\texttt{v}}(r_1^{\texttt{v}}+1)-2\big)|\Delta_2(\mathcal T_h)|+|\Delta_1(\mathcal T_h)|\\
%&\quad -|\Delta_0(\mathcal T_h)|+|\Delta_0(\mathcal T_h)|{r_1^{\texttt{v}}+1\choose2}+1.   
%\end{align*}
%Hence by $r_2^e = -1$ and $r_2^{\texttt{v}} = \max\{ r_1^{\texttt{v}}, 0\}-1$,
%\begin{align*}
%\dim\div\mathbb V^{\div}_{k}(\mathcal T_h; \boldsymbol{r}_1)
%&=\dim\stackrel{\circ}{\mathbb P}_{k-1}(\boldsymbol{r}_2)|\Delta_2(\mathcal T_h)|+|\Delta_0(\mathcal T_h)|{r_2^{\texttt{v}}+2\choose2} \\
%&\quad -|\Delta_2(\mathcal T_h)|+|\Delta_1(\mathcal T_h)|-|\Delta_0(\mathcal T_h)|+1 \\
%&=\dim\mathbb V^{L^2}_{k-1}(\mathcal T_h; \boldsymbol{r}_2)-|\Delta_2(\mathcal T_h)|+|\Delta_1(\mathcal T_h)|-|\Delta_0(\mathcal T_h)|+1.   
%\end{align*}
The rest of the proof is the same as that of Theorem~\ref{lm:femderhamcomplex}.
\end{proof}

\begin{example}\rm
%The lowest degree example is $r_0^{\texttt{v}} = 0, r_1^{\texttt{v}} = -1, r_2^{\texttt{v}} = -1$ and $k = 1$ which is the standard de Rham complex
%$$
%\mathbb R\xrightarrow{\subset} {\rm Lagrange}_{2}(\mathcal T_h; 
%\begin{pmatrix}
%0\\
%0 
%\end{pmatrix}
%) \xrightarrow{\curl} {\rm BDM}_{1}(\mathcal T_h; 
%\begin{pmatrix}
% -1\\
% -1
%\end{pmatrix}
%) \xrightarrow{\div} {\rm DG}_{0}(\mathcal T_h; 
%\begin{pmatrix}
% -1\\
% -1
%\end{pmatrix}
%)\xrightarrow{} 0.
%$$
For $k\geq 1$, $r_0^{\texttt{v}} = 0, r_1^{\texttt{v}} = -1, r_2^{\texttt{v}} = -1$, we recover the standard finite element de Rham complex
$$
\mathbb R\xrightarrow{\subset} {\rm Lagrange}_{k+1}( 
\begin{pmatrix}
0\\
0 
\end{pmatrix}
) \xrightarrow{\curl} {\rm BDM}_{k}( 
\begin{pmatrix}
 -1\\
 -1
\end{pmatrix}
) \xrightarrow{\div} {\rm DG}_{k-1}(
\begin{pmatrix}
 -1\\
 -1
\end{pmatrix}
)\xrightarrow{} 0.
$$
We can choose $r_0^{\texttt{v}} = 1, r_1^{\texttt{v}} = 0, r_2^{\texttt{v}} = -1$ and $k\geq 2$ to get
\begin{equation*}%\label{eq:argyrishermite}
% \resizebox{0.91\hsize}{!}{$
\mathbb R\xrightarrow{\subset} {\rm Herm}_{k+1}( 
\begin{pmatrix}
1\\
0 
\end{pmatrix}
)\xrightarrow{\curl}{\rm Sten}_{k}( 
\begin{pmatrix}
0\\
-1 
\end{pmatrix}
) \xrightarrow{\div} {\rm DG}_{k-1}(
\begin{pmatrix}
 -1\\
 -1
\end{pmatrix}
)\xrightarrow{}0,
% $}
\end{equation*}
which has been constructed in~\cite{Christiansen;Hu;Hu:2018finite}. 
%Our finite element de Rham complex~\eqref{eq:femdivderhamcomplex} is a generalization of~\eqref{eq:argyrishermite} to smoother cases at vertices.
\end{example}

%\subsection{Bubble complexes for continuous vector elements}

\subsection{General cases with inequality constraint}
We consider more general cases with an inequality constraint on the smoothness parameters $\bs r_1$ and $\bs r_2$:
\begin{equation}\label{eq:increaser2}
\bs r_0 = \bs r_1 + 1,  \quad \bs r_1\geq -1, \quad \quad \boldsymbol{r}_2\geq \boldsymbol{r}_1\ominus 1, \quad r_1^{\texttt{v}}\geq 2 \, r_1^e + 1, \quad r_2^{\texttt{v}}\geq 2 \, r_2^e.
\end{equation}
To define the finite element spaces, we further require
\begin{equation*}%\label{eq:r1vr2vk}
k\geq \max\{2r_1^{\texttt{v}}+2, 2r_2^{\texttt{v}}+2, 3r_2^e+4, (r_2^e+4)[r_2^{\texttt{v}}=0]\},
\end{equation*}
where the Iverson bracket $[ {\rm statement}] = 1$ if the statement inside the bracket is true and $0$ otherwise. 
%To ensure $$, we additionally assume $k\geq r_2^{\texttt{v}}+r_2^{e}+4$ when $\boldsymbol{r}_2=(2,1)$, $(0,0)$ and $(0,-1)$.

The finite element spaces for scalar functions $\mathbb V^{\curl}_{k+1}(\mathcal T_h; \boldsymbol{r}_0) $ and $\mathbb V^{L^2}_{k-1}(\mathcal T_h; \boldsymbol{r}_2)$ remain unchanged. 
%
%
%
%Given $\boldsymbol{r}_1=\begin{pmatrix}r_1^{\texttt{v}} \\ r_1^{e}\end{pmatrix}, \boldsymbol{r}_2=\begin{pmatrix}r_2^{\texttt{v}} \\ r_2^{e}\end{pmatrix}\in \mathbb N_{-1}^2$ such that
%Set $\boldsymbol{r}_0=\begin{pmatrix}r_0^{\texttt{v}} \\ r_0^{e}\end{pmatrix}:=\boldsymbol{r}_1+1\geq0$. 
%
Next we define a new finite element space for $\boldsymbol{H}(\div,\Omega)$. Take $\mathbb P_{k}(T;\mathbb R^2)$ as the space of shape functions.
The degrees of freedom are
\begin{subequations}\label{eq:2dCrdivfemdof}
\begin{align}
\nabla^i\boldsymbol{v}(\texttt{v}), & \quad \texttt{v}\in \Delta_{0}(T), i=0,\ldots, r_1^{\texttt{v}}, \label{eq:2dCrdivfemdof0}\\
\nabla^j\div\boldsymbol{v}(\texttt{v}),  & \quad \texttt{v}\in \Delta_{0}(T), j=\max\{r_1^{\texttt{v}},0\},\ldots, r_2^{\texttt{v}}, \label{eq:2dCrdivfemdof1}\\
\int_e \boldsymbol{v}\cdot\boldsymbol{n}\, q\dd s, &\quad  q\in \mathbb P_{k- 2(r_1^{\texttt{v}} +1)} (e), e\in \Delta_{1}(T),\label{eq:2dCrdivfemdof2}\\
\int_e \partial_n^i(\boldsymbol{v}\cdot\boldsymbol{t})\ q \dd s, &\quad  q\in \mathbb P_{k - 2(r_1^{\texttt{v}} +1)+i} (e), e\in \Delta_{1}(T),  i=0,\ldots, r_1^{e}, \label{eq:2dCrdivfemdof3}\\
\int_e \partial_n^i(\div\boldsymbol{v})\ q \dd s, &\quad  q\in \mathbb P_{k-1 - 2(r_2^{\texttt{v}} +1)+i} (e), e\in \Delta_{1}(T),  i=0,\ldots, r_2^{e}, \label{eq:2dCrdivfemdof4}\\
\int_T \div\boldsymbol{v}\, q\dx, &\quad q\in \mathbb B_{k-1}(\boldsymbol{r}_2)/\mathbb R, \label{eq:2dCrdivfemdof5}\\
\int_T \boldsymbol{v}\cdot\boldsymbol{q} \dx, &\quad \boldsymbol{q}\in \curl\mathbb B_{k+1}(\bs r_1+1). \label{eq:2dCrdivfemdof6}
\end{align}
\end{subequations}

We explain the change of DoFs. We add DoFs \eqref{eq:2dCrdivfemdof1},~\eqref{eq:2dCrdivfemdof4}, and \eqref{eq:2dCrdivfemdof5} on $\div \bs v$ to determine $\div \bs v\in \mathbb V^{L^2}_{k-1}(\mathcal T_h; \boldsymbol{r}_2)$. For interior moments, we use the bubble complex \eqref{eq:femderhamcomplex} to split it into range of $\div$ and its orthogonal complement. On edges, DoFs on $\div \bs v$ introduce some linear dependence of normal derivatives of the tangential and normal components and thus need to remove some redundancy. 

More precisely, for $i=0,1,\ldots, r_{1}^e-1$ with $r_2^e =r_1^e  - 1\geq0$, 
$$
\partial_n^i(\div\boldsymbol{v})=\partial_n^i(\partial_n(\boldsymbol{v}\cdot\boldsymbol{n})+\partial_t(\boldsymbol{v}\cdot\boldsymbol{t}))=\partial_n^{i+1}(\boldsymbol{v}\cdot\boldsymbol{n})+\partial_t(\partial_n^i(\boldsymbol{v}\cdot\boldsymbol{t})).
$$
The second term $\partial_t(\partial_n^i(\boldsymbol{v}\cdot\boldsymbol{t}))$ will be determined by~\eqref{eq:2dCrdivfemdof0} and~\eqref{eq:2dCrdivfemdof3}. The normal derivative of the normal component $\partial_n^{i+1}(\boldsymbol{v}\cdot\boldsymbol{n}), i\geq 0,$ is built into \eqref{eq:2dCrdivfemdof4} but not $\bs v\cdot \bs n$ which should be explicitly included in \eqref{eq:2dCrdivfemdof2}. A linear combination of~\eqref{eq:2dCrdivfemdof2},~\eqref{eq:2dCrdivfemdof3}, and~\eqref{eq:2dCrdivfemdof4} will determine
$$
\int_e \partial_n^i \boldsymbol{v}\cdot \bs q \dd s, \quad  \bs q\in \mathbb P^2_{k - 2(r_1^{\texttt{v}} +1)+i} (e), e\in \Delta_{1}(T_h),  i=0,1,\ldots, r_1^{e}.
$$
Consequently it returns to the smooth finite elements defined before. 
% The change of DoFs is illustrated in Fig.~\ref{fig:dofchange}.
%\end{remark}

% \LC{Define $\mathbb B^{\div}_{k}(T;\boldsymbol{r}_1)$ and count the dimension.}

\begin{lemma}\label{lem:2dCrdivfemunisolvence}
Assume $\bs r_1, \bs r_2$ satisfy \eqref{eq:increaser2}, and $k\geq \max\{2r_2^{\texttt{v}}+2, 3r_2^e+4, (r_2^e+4)[r_2^{\texttt{v}}=0]\}$. 
The DoFs~\eqref{eq:2dCrdivfemdof0}-\eqref{eq:2dCrdivfemdof6} are uni-solvent for $\mathbb P_{k}(T;\mathbb R^2)$.  
\end{lemma}
\begin{proof}
The condition $k\geq \max\{2r_2^{\texttt{v}}+2, 3r_2^e+4, (r_2^e+4)[r_2^{\texttt{v}}=0]\}$ ensures $\dim\mathbb B_{k-1}(\bs r_2)\geq1$ which can be verified by showing $|S_2(T)| > 0$ cf. Lemma \ref{lem:geodecomp2d}.  
%\mnote{ in the uni-solvence, we may relax the requirement of $k$. \XH{Unless DoF \eqref{eq:2dCrdivfemdof5} is replaced by others.}}

The number of DoFs~\eqref{eq:2dCrdivfemdof1} and~\eqref{eq:2dCrdivfemdof4}-\eqref{eq:2dCrdivfemdof5} on $\div \bs v$ is
$
\dim \mathbb P_{k-1}(T)-3{r_1^{\texttt{v}}+1\choose2}-1,
$
which is constant with respect to $\boldsymbol{r}_2$. Hence the number of DoFs~\eqref{eq:2dCrdivfemdof0}-\eqref{eq:2dCrdivfemdof6} is also constant with respect to $\boldsymbol{r}_2$. As a result the number of DoFs~\eqref{eq:2dCrdivfemdof0}-\eqref{eq:2dCrdivfemdof6} equals to $\dim\mathbb P_{k}(T;\mathbb R^2)$, which has been proved for case $\boldsymbol{r}_2= \boldsymbol{r}_1\ominus 1$.
% The number of DoFs~\eqref{eq:2dCrdivfemdof0}-\eqref{eq:2dCrdivfemdof6} is
% \begin{align*}
% & 6{r_1^{\texttt{v}}+2\choose2}+3{r_2^{\texttt{v}}+2\choose2}-3{r_1^{\texttt{v}}+1\choose2} + 3(k-2r_1^{\texttt{v}}-1)\\
% %&+ 3\sum_{i=0}^{r_1^{e}}(k-2r_1^{\texttt{v}}-1+i) + 3\sum_{i=0}^{r_2^{e}}(k-2r_2^{\texttt{v}}-2+i) \\
% &+{k+1\choose2}-1-3{r_2^{\texttt{v}}+2\choose2}+{k+3\choose2}-3{r_1^{\texttt{v}}+3\choose2}-3(k-2r_1^{\texttt{v}}-2)\\
% =&(k+1)(k+2)=\dim\mathbb P_{k}(T;\mathbb R^2).
% \end{align*}   

Take $\boldsymbol{v}\in\mathbb P_{k}(T;\mathbb R^2)$ and assume all the DoFs~\eqref{eq:2dCrdivfemdof0}-\eqref{eq:2dCrdivfemdof6} vanish. 
The vanishing DoF~\eqref{eq:2dCrdivfemdof2} implies $\div\boldsymbol{v}\in L_0^2(T)$.
By the vanishing DoFs~\eqref{eq:2dCrdivfemdof0}-\eqref{eq:2dCrdivfemdof1} and~\eqref{eq:2dCrdivfemdof4}-\eqref{eq:2dCrdivfemdof5}, we get $\div\boldsymbol{v}=0$.
And it follows from the vanishing DoFs~\eqref{eq:2dCrdivfemdof0} and~\eqref{eq:2dCrdivfemdof2}-\eqref{eq:2dCrdivfemdof3} that $\boldsymbol{v}\in\curl\mathbb B_{k+1}(\bs r_1+1)$. Therefore $\boldsymbol{v}=\boldsymbol{0}$ holds from the vanishing DoF~\eqref{eq:2dCrdivfemdof6}.  
\end{proof}

Define global $C^{r_1^{e}}$-continuous finite element space
\begin{align*}
\mathbb V^{\div}_{k}(\mathcal T_h; \boldsymbol{r}_1, \boldsymbol{r}_2) = \{\boldsymbol{v}\in \boldsymbol{L}^2(\Omega;\mathbb R^2):&\, \boldsymbol{v}|_T\in\mathbb P_{k}(T;\mathbb R^2)\;\forall~T\in\mathcal T_h, \\
&\textrm{ all the DoFs~\eqref{eq:2dCrdivfemdof0}-\eqref{eq:2dCrdivfemdof4} are single-valued} \}.
\end{align*}
 When $\bs r_2\geq \bs r_1\ominus 1$, we have $$\mathbb V^{\div}_{k}(\mathcal T_h; \boldsymbol{r}_1,\boldsymbol{r}_2) \subseteq \mathbb V^{\div}_{k}(\mathcal T_h; \boldsymbol{r}_1,\boldsymbol{r}_1\ominus 1).$$
Namely additional smoothness on $\div \bs v$ is imposed. We use Figure~\ref{fig:increaser2} to illustrate the exactness of the finite element de Rham complex~\eqref{eq:nodalfemderhamcomplex}, which is obtained by adding more constraints on $\div \bs v$.
\begin{figure}[htbp]
\begin{center}
\includegraphics[width=4in]{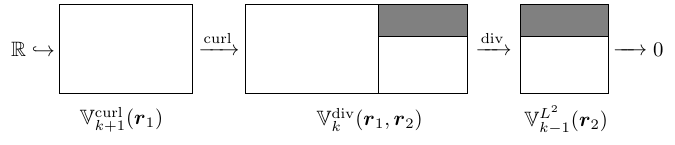}
\caption{Explanation of the smooth finite element de Rham complex with increased smoothness in pressure. 
% Spaces are corresponding to white boxes and the darker gray part denotes the extra constraints on $\div \bs v$.
}
\label{fig:increaser2}
\end{center}
\end{figure}

%\begin{remark}\rm 
%Note that both $\boldsymbol{r}_1$ and $\boldsymbol{r}_2$ are used in the definition.
%\begin{figure}[htbp]
%\begin{center}
%\includegraphics[width=7cm]{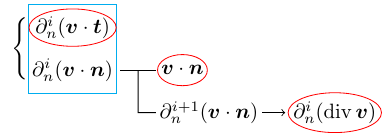}
%\caption{Change of DoFs: DoF~\eqref{eq:Crdivfemdof2d1} is in the blue rectangle and~\eqref{eq:2dCrdivfemdof2},~\eqref{eq:2dCrdivfemdof3}, and~\eqref{eq:2dCrdivfemdof4} are in the red circle. They are equivalent when $r_2^e =r_1^e  - 1\geq0$.}
%\label{fig:dofchange}
%\end{center}
%\end{figure}

%Roughly speaking, the vector space is split into tangential $\bs v\cdot \bs t$ and normal $\bs v\cdot \bs n$. We use $\bs r_1$ for $\bs v\cdot \bs t$: DoFs~\eqref{eq:2dCrdivfemdof0},~\eqref{eq:2dCrdivfemdof3} and~\eqref{eq:2dCrdivfemdof6}, and $\bs r_2$ for $\bs v\cdot \bs n$ by the linear combination $\bs \div \bs v$. 

\begin{theorem}\label{thm:nodalfemderhamcomplex}
Let $\bs r_0 = \bs r_1 + 1, \bs r_1\geq -1, \boldsymbol{r}_2\geq \boldsymbol{r}_1\ominus 1
$ satisfying $r_1^{\texttt{v}}\geq 2 \, r_1^e + 1, r_2^{\texttt{v}}\geq 2 \, r_2^e$. Assume $k\geq \max\{2r_1^{\texttt{v}}+2, 2r_2^{\texttt{v}}+2, 3r_2^e+4, (r_2^e+4)[r_2^{\texttt{v}}=0]\}$. The finite element complex
\begin{equation}\label{eq:nodalfemderhamcomplex}
% \resizebox{1.0\hsize}{!}{$
\mathbb R\xrightarrow{\subset} \mathbb V^{\curl}_{k+1}(\mathcal T_h; \boldsymbol{r}_0)\xrightarrow{\curl}\mathbb V^{\div}_{k}(\mathcal T_h; \boldsymbol{r}_1,\boldsymbol{r}_2) \xrightarrow{\div} \mathbb V^{L^2}_{k-1}(\mathcal T_h; \boldsymbol{r}_2)\xrightarrow{}0
% $}
\end{equation}
 is exact.
\end{theorem}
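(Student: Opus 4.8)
The plan is to apply the abstract exactness criterion of Lemma~\ref{lm:abstract} to the three spaces in~\eqref{eq:nodalfemderhamcomplex}, with $\mathcal V_0=\mathbb R$, $\dd_1=\curl$ and $\dd_2=\div$. Two hypotheses are immediate. First, \eqref{eq:nodalfemderhamcomplex} is a complex by construction: $\div\curl v=0$ and $\curl v$ inherits smoothness $\boldsymbol r_0-1=\boldsymbol r_1$ at vertices and edges with $\div\curl v=0$ trivially $\boldsymbol r_2$-smooth, so $\curl\mathbb V^{\curl}_{k+1}(\mathcal T_h;\boldsymbol r_0)\subseteq\mathbb V^{\div}_{k}(\mathcal T_h;\boldsymbol r_1,\boldsymbol r_2)$, while the single-valued DoFs~\eqref{eq:2dCrdivfemdof1} and~\eqref{eq:2dCrdivfemdof4}-\eqref{eq:2dCrdivfemdof5} force $\div\boldsymbol v\in\mathbb V^{L^2}_{k-1}(\mathcal T_h;\boldsymbol r_2)$. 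Second, on a connected $\Omega$ a function with vanishing $\curl$ is constant, and constants lie in $\mathbb V^{\curl}_{k+1}(\mathcal T_h;\boldsymbol r_0)$, so $\mathbb R=\mathbb V^{\curl}_{k+1}(\mathcal T_h;\boldsymbol r_0)\cap\ker(\curl)$. It then remains to establish the dimension identity~\eqref{eq:abstractdimenidentity} and one of the two image-kernel equalities.

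For the image-kernel equality at the middle space I would prove $\curl\mathbb V^{\curl}_{k+1}(\mathcal T_h;\boldsymbol r_0)=\mathbb V^{\div}_{k}(\mathcal T_h;\boldsymbol r_1,\boldsymbol r_2)\cap\ker(\div)$ by reducing to the base case $\boldsymbol s:=\max\{\boldsymbol r_1-1,-1\}$. The key observation is that a divergence-free field cannot feel the extra smoothness encoded by $\boldsymbol r_2$: given the inclusion $\mathbb V^{\div}_{k}(\mathcal T_h;\boldsymbol r_1,\boldsymbol r_2)\subseteq\mathbb V^{\div}_{k}(\mathcal T_h;\boldsymbol r_1,\boldsymbol s)$, any $\boldsymbol v$ in the larger space with $\div\boldsymbol v=0$ has $\div\boldsymbol v$ globally smooth and hence satisfies every constraint imposed by the larger $\boldsymbol r_2$, so it already lies in the smaller space. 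This yields
\begin{equation*}
\mathbb V^{\div}_{k}(\mathcal T_h;\boldsymbol r_1,\boldsymbol r_2)\cap\ker(\div)=\mathbb V^{\div}_{k}(\mathcal T_h;\boldsymbol r_1,\boldsymbol s)\cap\ker(\div),
\end{equation*}
and the right-hand side equals $\curl\mathbb V^{\curl}_{k+1}(\mathcal T_h;\boldsymbol r_0)$ by the exactness of Theorem~\ref{lm:femderhamcomplex} (when $r_1^e\geq0$) or of the complex~\eqref{eq:femdivderhamcomplex} (when $r_1^e=-1$), which closes this step.

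The main work, and the place I expect the real obstacle, is the dimension identity
\begin{equation*}
1-\dim\mathbb V^{\curl}_{k+1}(\mathcal T_h;\boldsymbol r_0)+\dim\mathbb V^{\div}_{k}(\mathcal T_h;\boldsymbol r_1,\boldsymbol r_2)-\dim\mathbb V^{L^2}_{k-1}(\mathcal T_h;\boldsymbol r_2)=0
\end{equation*}
for a \emph{general} $\boldsymbol r_2$, whereas the table in Lemma~\ref{lm:dimension} was tabulated only for $\boldsymbol r_2=\boldsymbol r_1-1$. I would write each dimension through its geometric decomposition $\dim=C_{i0}|\Delta_0(\mathcal T_h)|+C_{i1}|\Delta_1(\mathcal T_h)|+C_{i2}|\Delta_2(\mathcal T_h)|$ and show the per-entity alternating sum $C_{0d}-C_{1d}+C_{2d}$ is unchanged as $\boldsymbol r_2$ grows from $\boldsymbol s$. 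Here $C_{0d}$ is independent of $\boldsymbol r_2$, and both $C_{1d}$ and $C_{2d}$ acquire $\boldsymbol r_2$-dependence only through the divergence DoFs~\eqref{eq:2dCrdivfemdof1},~\eqref{eq:2dCrdivfemdof4},~\eqref{eq:2dCrdivfemdof5}, which are, entity by entity, exactly the vertex, edge and interior DoFs~\eqref{eq:CrL2femdof2d0}-\eqref{eq:CrL2femdof2d1} defining $\mathbb V^{L^2}_{k-1}(\mathcal T_h;\boldsymbol r_2)$, up to the single global constant removed by the $/\mathbb R$ in~\eqref{eq:2dCrdivfemdof5} (whence the role of $\dim\mathbb B_{k-1}(\boldsymbol r_2)\geq1$). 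Consequently the $\boldsymbol r_2$-increments of $C_{1d}$ and $C_{2d}$ coincide, $-C_{1d}+C_{2d}$ is invariant, and $C_{0d}-C_{1d}+C_{2d}=(1,-1,1)$ for $d=0,1,2$ exactly as in the last row of the table in Lemma~\ref{lm:dimension}; this matches the observation in the proof of Lemma~\ref{lem:2dCrdivfemunisolvence} that the local divergence DoF count is constant in $\boldsymbol r_2$.

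Euler's formula $|\Delta_0(\mathcal T_h)|-|\Delta_1(\mathcal T_h)|+|\Delta_2(\mathcal T_h)|=1$ then collapses the alternating sum to zero, and Lemma~\ref{lm:abstract} delivers exactness. The delicate point to get right is the accounting of that single constant degree of freedom—the $/\mathbb R$ in~\eqref{eq:2dCrdivfemdof5} against the $\mathbb R$ heading the complex—so that its bookkeeping is consistent between $\mathbb V^{\div}_{k}(\mathcal T_h;\boldsymbol r_1,\boldsymbol r_2)$ and $\mathbb V^{L^2}_{k-1}(\mathcal T_h;\boldsymbol r_2)$.
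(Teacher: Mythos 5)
Your proposal is correct and follows essentially the same route as the paper: verify the complex property and the kernel identity $\curl\mathbb V^{\curl}_{k+1}(\boldsymbol r_0)=\mathbb V^{\div}_{k}(\boldsymbol r_1,\boldsymbol r_2)\cap\ker(\div)$ (reducing to the base case $\boldsymbol r_2=\max\{\boldsymbol r_1-1,-1\}$ since a divergence-free field trivially meets the extra $\boldsymbol r_2$ constraints), then show the alternating dimension sum is unchanged as $\boldsymbol r_2$ grows because the added vertex and edge DoFs of $\mathbb V^{\div}_{k}$ and $\mathbb V^{L^2}_{k-1}$ coincide, and conclude via Euler's formula and Lemma~\ref{lm:abstract}. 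Your write-up is in fact somewhat more explicit than the paper's on the kernel step and on the bookkeeping of the quotient by constants.
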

\begin{proof}
It is straightforward to verify that~\eqref{eq:nodalfemderhamcomplex} is a complex by showing $\curl\mathbb V^{\curl}_{k+1}(\mathcal T_h; \boldsymbol{r}_0)\subseteq \mathbb V^{\div}_{k}(\mathcal T_h; \boldsymbol{r}_1,\boldsymbol{r}_2)$ and $\div \mathbb V^{\div}_{k}(\mathcal T_h; \boldsymbol{r}_1,\boldsymbol{r}_2)\subseteq \mathbb V^{L^2}_{k-1}(\mathcal T_h; \boldsymbol{r}_2)$. 
% Next we show that complex~\eqref{eq:nodalfemderhamcomplex} is exact. 
It is also obvious that 
$$
\curl\mathbb V^{\curl}_{k+1}(\mathcal T_h; \boldsymbol{r}_0)=\mathbb V^{\div}_{k}(\mathcal T_h; \boldsymbol{r}_1,\boldsymbol{r}_2)\cap\ker(\div).
$$
We have proved the exactness for $\bs r_2= \boldsymbol{r}_1\ominus 1$. When counting the dimension, only need to check the difference. 

The added vertex DoFs for $\mathbb V^{\div}_{k}(\mathcal T_h; \boldsymbol{r}_1,\boldsymbol{r}_2)$ and $\mathbb V^{L^2}_{k-1}(\mathcal T_h; \boldsymbol{r}_2)$ are equal, i.e., $$C_{10}(\bs r_2) - C_{10}(\boldsymbol{r}_1\ominus 1) = C_{20}(\bs r_2) - C_{20}(\boldsymbol{r}_1\ominus 1).$$ Same argument can be applied to edge DoFs. Therefore the alternating column sums remain the same and the proof of Theorem~\ref{lm:femderhamcomplex} can be still applied.
\end{proof}

We present two examples of the de Rham complex ending with the Lagrange element.
\begin{example}\rm
Consider the case $\bs r_1=(1,0)$, $\bs r_2=0$ and $k\geq 4$, which is also constructed as $H^1(\div)-H^1$ Stokes pair in Falk and Neilan~\cite{FalkNeilan2013}. Now we can choose continuous pressure space without increasing the polynomial degree. The complex is
\begin{equation*}%\label{eq:argyrishermitelagrange}
% \resizebox{1.0\hsize}{!}{$
\mathbb R\xrightarrow{\subset} {\rm Argy}_{k+1}( 
\begin{pmatrix}
2\\
1 
\end{pmatrix}
)\xrightarrow{\curl}
\mathbb V^{\div}_{k}(\begin{pmatrix}
1\\
0 
\end{pmatrix}
,
\begin{pmatrix}
 0\\
 0
\end{pmatrix}
) \xrightarrow{\div} {\rm Lagrange}_{k-1}( 
\begin{pmatrix}
 0\\
 0
\end{pmatrix}
)\xrightarrow{}0.
% $}
\end{equation*}
The velocity space is a reduced Hermite space with continuity of $\div \bs v$ at vertices and edges. With such modification, this $\mathbb P_{k} - \mathbb P_{k-1}$ Stokes pair with continuous pressure element is point-wise divergence free comparing to the Taylor-Hood element. 
%But the boundary condition should be imposed by Nistch technique. 
\end{example}

\begin{example}\rm 
Consider the case $\bs r_1=-1$ and $\bs r_2=0$, and $k\geq 4$.
The complex is
\begin{equation*}%\label{eq:argyrishermitelagrange}
%\resizebox{0.91\hsize}{!}{$
\mathbb R\xrightarrow{\subset} {\rm Lagrange}_{k+1}( 
\begin{pmatrix}
0\\
0 
\end{pmatrix}
)\xrightarrow{\curl}
\mathbb V^{\div}_{k}( \begin{pmatrix}
-1\\
-1 
\end{pmatrix}
,
\begin{pmatrix}
 0\\
 0
\end{pmatrix}
) \xrightarrow{\div} {\rm Lagrange}_{k-1}( 
\begin{pmatrix}
 0\\
 0
\end{pmatrix}
)\xrightarrow{}0,
%$}
\end{equation*}
which is the rotation of the finite element de Rham complex in~\cite[Section 5.2.1]{HuZhangZhang2020curlcurl}.
The space $\mathbb V^{\div}_{k}(\mathcal T_h; \bs r_1, \bs r_0
)$ can be used to discretize fourth-order div or curl equations~\cite{FanLiuZhang2019,HuZhangZhang2020curlcurl}. 
We can also apply the pair $\mathbb V^{\div}_{k}(\mathcal T_h;  \bs r_1, \bs r_0
)$ and ${\rm Lagrange}_{k-1}(\mathcal T_h; \bs r_0)$ to mixed finite element methods for Poisson equation $-\Delta u=f$, in which the discrete $u_h$ is continuous.
\end{example}

For simplicity, hereafter we will omit the triangulation $\mathcal T_h$ in the notation of global finite element spaces. For example, $\mathbb V^{\div}_{k}(\mathcal T_h; \boldsymbol{r}_1,\boldsymbol{r}_2)$ will be abbreviated as $\mathbb V^{\div}_{k}(\boldsymbol{r}_1,\boldsymbol{r}_2)$.

\section{Beyond the de Rham Complex}\label{sec:femcomplexbgg}
In this section, we shall construct more finite element complexes from copies of finite element de Rham complexes. 
% In two dimensions, the induced complexes are elasticity and divdiv complexes, and by rotation, Hessian and rot-rot complexes.

\subsection{Finite element curl\,div complexes}
Based on the finite element de Rham complex~\eqref{eq:nodalfemderhamcomplex}, we can obtain the finite element discretization of the curl\,div complex~\cite{Arnold;Hu:2020Complexes}
\begin{equation*}%\label{eq:curldivcomplex}
%\resizebox{0.95\hsize}{!}{$
\mathbb R\times\{0\}\xrightarrow{\subset} H^{1}( \Omega)\times\mathbb R \xrightarrow{(\curl, \boldsymbol{x})} \boldsymbol{H}(\curl\operatorname{div}, \Omega) \xrightarrow{\curl\operatorname{div}}\boldsymbol{H}(\operatorname{div}, \Omega) \xrightarrow{\operatorname{div}} L^{2}(\Omega) \rightarrow 0,
%$}
\end{equation*}
where $\boldsymbol{H}(\curl\operatorname{div}, \Omega):=\boldsymbol{H}^{0, 1}(\div,\Omega)=\{\boldsymbol{v}\in\boldsymbol{H}(\operatorname{div}, \Omega): \div\boldsymbol{v}\in H^1(\Omega)\}$, and the operator $(\curl, \boldsymbol{x})$ is defined by $(\curl, \boldsymbol{x})\begin{pmatrix}v \\ c\end{pmatrix}:=\curl v+c\boldsymbol{x}$ for $v\in H^{1}(\Omega)$ and $c\in\mathbb{R}$.

\begin{theorem}%\label{thm:nodalfemderhamcomplex}
Let $\bs r_0 = \bs r_1 + 1, \bs r_1\geq -1, \boldsymbol{r}_2\geq \max\{\boldsymbol{r}_1-1,0\}, \boldsymbol{r}_3\geq \max\{\boldsymbol{r}_2-2,-1\}
$ satisfying $r_1^{\texttt{v}}\geq 2 \, r_1^e + 1, r_2^{\texttt{v}}\geq 2 \, r_2^e, r_3^{\texttt{v}}\geq 2 \, r_3^e$. Assume $k\geq \max\{2r_1^{\texttt{v}}+2, 2r_2^{\texttt{v}}+2, 3r_2^e+4, (r_2^e+4)[r_2^{\texttt{v}}=0], 2r_3^{\texttt{v}}+4, 3r_3^e+6, (r_3^e+6)[r_3^{\texttt{v}}=0]\}$. The finite element complex
\begin{equation}\label{eq:femcurldivcomplex}
\resizebox{0.95\hsize}{!}{$
\mathbb R\times\{0\}\xrightarrow{\subset} \mathbb V^{\curl}_{k+1}(\boldsymbol{r}_0)\times\mathbb R\xrightarrow{(\curl,\boldsymbol{x})}\mathbb V^{\div}_{k}(\boldsymbol{r}_1,\boldsymbol{r}_2) \xrightarrow{\curl\div}\mathbb V^{\div}_{k-2}(\boldsymbol{r}_2-1,\boldsymbol{r}_3) \xrightarrow{\div} \mathbb V^{L^2}_{k-3}(\boldsymbol{r}_3)\xrightarrow{}0
$}
\end{equation}
 is exact.
\end{theorem}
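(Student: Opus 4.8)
The plan is to realize the curl\,div complex~\eqref{eq:femcurldivcomplex} as the splicing of two finite element de Rham complexes of the form~\eqref{eq:nodalfemderhamcomplex}, each exact by Theorem~\ref{thm:nodalfemderhamcomplex}, and then read off exactness spot by spot. First I would check that~\eqref{eq:femcurldivcomplex} is a complex. For $(v,c)$ the image $\curl v+c\bs x$ satisfies $\div(\curl v+c\bs x)=\div\curl v+c\,\div\bs x=2c$, a constant, so $\curl\div(\curl v+c\bs x)=\curl(2c)=\boldsymbol{0}$; and $\div\curl\div\boldsymbol{v}=\div\curl(\div\boldsymbol{v})=0$ since $\div\curl=0$. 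Conformity of each arrow is inherited from the two de Rham complexes below, together with the fact that $\bs x\in\mathbb P_1$ is a global smooth polynomial lying in $\mathbb V^{\div}_k(\bs r_1,\bs r_2)$ with constant (hence admissible) divergence.

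Next I would set up the two de Rham complexes. Complex A is~\eqref{eq:nodalfemderhamcomplex} with parameters $(\bs r_0,\bs r_1,\bs r_2)$ and degrees $(k+1,k,k-1)$, exact under the present hypotheses. Complex B is~\eqref{eq:nodalfemderhamcomplex} with parameters $(\bs r_2,\bs r_2-1,\bs r_3)$ and degrees $(k-1,k-2,k-3)$; I would verify its hypotheses from those of the theorem: $\bs r_2=(\bs r_2-1)+1$, $\bs r_2-1\geq -1$ (since $\bs r_2\geq 0$), $\bs r_3\geq\max\{\bs r_2-2,-1\}$, $r_2^{\texttt{v}}\geq 2(r_2^e-1)+1$ (equivalent to $r_2^{\texttt{v}}\geq 2r_2^e$), $r_3^{\texttt{v}}\geq 2r_3^e$, the degree bound $k-2\geq\max\{2(r_2^{\texttt{v}}-1)+2,\,2r_3^{\texttt{v}}+2,\,1\}$ (i.e.\ $k\geq\max\{2r_2^{\texttt{v}}+2,\,2r_3^{\texttt{v}}+4,\,3\}$), and $\dim\mathbb B_{k-3}(\bs r_3)\geq 1$. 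Thus Theorem~\ref{thm:nodalfemderhamcomplex} gives exactness of B as well. Since $\bs r_2\geq 0$ forces $r_2^e\geq 0$, I would also note that the tail space $\mathbb V^{L^2}_{k-1}(\bs r_2)$ of A coincides, as a finite element space, with the leading space $\mathbb V^{\curl}_{k-1}(\bs r_2)$ of B.

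Exactness then follows by a short chase. At $\mathbb V^{\curl}_{k+1}(\bs r_0)\times\mathbb R$: if $\curl v+c\bs x=\boldsymbol{0}$, taking divergence gives $2c=0$, hence $c=0$ and $\curl v=\boldsymbol{0}$, so $v\in\mathbb R$; thus $\ker(\curl,\bs x)=\mathbb R\times\{0\}$. At $\mathbb V^{\div}_k(\bs r_1,\bs r_2)$: if $\curl\div\boldsymbol{v}=\boldsymbol{0}$ then $\div\boldsymbol{v}$ is a constant $d$; with $c=d/2$ the field $\boldsymbol{v}-c\bs x$ is divergence free, so by exactness of A it equals $\curl w$, giving $\boldsymbol{v}=(\curl,\bs x)(w,c)$. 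At $\mathbb V^{\div}_{k-2}(\bs r_2-1,\bs r_3)$: since $\div$ in A is onto $\mathbb V^{L^2}_{k-1}(\bs r_2)$, the image of $\curl\div=\curl\circ\div$ equals $\curl\,\mathbb V^{\curl}_{k-1}(\bs r_2)$, which by exactness of B is exactly $\ker(\div)$ at this spot. Finally the last $\div$ is surjective, again by exactness of B.

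The hardest part is not any single chase but the bookkeeping for complex B: one must confirm that lowering $\bs r_1\to\bs r_2-1$ and the degree $k\to k-2$ keeps every inequality of Theorem~\ref{thm:nodalfemderhamcomplex} intact, and that the borderline case $r_2^e=0$ (where $(\bs r_2-1)^e=-1$, the normal-continuous regime) is still covered by the construction. As a cross-check one may verify the alternating dimension identity, with the extra $\mathbb R$ absorbed by the $\times\mathbb R$ summand and the $\bs x$ term; but with both de Rham complexes already exact, the splicing argument delivers exactness without a global dimension count.
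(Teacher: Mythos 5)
Your proposal is correct and follows essentially the same route as the paper's proof: verify the complex property, invoke the exactness of the two de Rham complexes with parameters $(\boldsymbol{r}_0,\boldsymbol{r}_1,\boldsymbol{r}_2)$ and $(\boldsymbol{r}_2,\boldsymbol{r}_2-1,\boldsymbol{r}_3)$, use the identification $\mathbb V^{L^2}_{k-1}(\boldsymbol{r}_2)=\mathbb V^{\curl}_{k-1}(\boldsymbol{r}_2)$ valid for $\boldsymbol{r}_2\geq 0$ to handle exactness at the third spot, and the $\div\boldsymbol{v}=2c$, $\boldsymbol{v}-c\boldsymbol{x}$ trick at the second. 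Your explicit verification of the hypotheses of Theorem~\ref{thm:nodalfemderhamcomplex} for the shifted complex is a useful addition that the paper leaves implicit.
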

\begin{proof}
By complex~\eqref{eq:nodalfemderhamcomplex}, clearly~\eqref{eq:femcurldivcomplex} is a complex, and $\div\mathbb V^{\div}_{k-2}(\boldsymbol{r}_2-1,\boldsymbol{r}_3)=\mathbb V^{L^2}_{k-3}(\boldsymbol{r}_3)$. We will focus on the exactness of complex~\eqref{eq:femcurldivcomplex}.

The condition $k\geq \max\{2r_2^{\texttt{v}}+2, 3r_2^e+4, (r_2^e+4)[r_2^{\texttt{v}}=0]\}$ implies $\dim\mathbb B_{k-1}(\bs r_2)\geq1$,  and $k\geq \max\{2r_3^{\texttt{v}}+4, 3r_3^e+6, (r_3^e+6)[r_3^{\texttt{v}}=0]\}$ implies $\dim\mathbb B_{k-3}(\bs r_3)\geq1$.
We get from the exactness of complex~\eqref{eq:nodalfemderhamcomplex} that
$$
\div\mathbb V^{\div}_{k}(\boldsymbol{r}_1,\boldsymbol{r}_2)=\mathbb V^{L^2}_{k-1}(\boldsymbol{r}_2), \quad \curl\mathbb V^{\curl}_{k-1}(\boldsymbol{r}_2)=\mathbb V^{\div}_{k-2}(\boldsymbol{r}_2-1,\boldsymbol{r}_3)\cap\ker(\div).
$$
Hence $\curl\div\mathbb V^{\div}_{k}(\boldsymbol{r}_1,\boldsymbol{r}_2)=\mathbb V^{\div}_{k-2}(\boldsymbol{r}_2-1,\boldsymbol{r}_3)\cap\ker(\div)$ follows from $\mathbb V^{\curl}_{k-1}(\boldsymbol{r}_2)=\mathbb V^{L^2}_{k-1}(\boldsymbol{r}_2)$ when $\boldsymbol{r}_2\geq0$.

For $\boldsymbol{v}\in\mathbb V^{\div}_{k}(\boldsymbol{r}_1,\boldsymbol{r}_2)\cap\ker(\curl\div)$, there exists constant $c$ such that $\div\boldsymbol{v}=2c$. Then we have $\div(\boldsymbol{v}-c\boldsymbol{x})=0$, i.e., $\boldsymbol{v}-c\boldsymbol{x}\in\mathbb V^{\div}_{k}(\boldsymbol{r}_1,\boldsymbol{r}_2)\cap\ker(\div)$. 
Therefore $\boldsymbol{v}\in\curl\mathbb V^{\curl}_{k+1}(\boldsymbol{r}_0)\oplus \boldsymbol{x}\mathbb R$ holds from the exactness of complex~\eqref{eq:nodalfemderhamcomplex}.
\end{proof}

\subsection{Finite element elasticity and Hessian complexes}
We first present two examples. Denote by 
$$\mathbb V_{k}^{\div}(\bs r_1, \bs r_2;\mathbb M) := \mathbb V_{k}^{\div}(\bs r_1, \bs r_2) \times \mathbb V_{k}^{\div}(\bs r_1, \bs r_2).
$$
We take two vector functions by row to form a matrix and each row belongs to $\mathbb V_{k}^{\div}(\bs r_1, \bs r_2)$.
To fit the space, we skip the constant space $\mathbb R$ in the beginning and $0$ at the end in the sequence, and $\mathcal T_h$ in the spaces. The first example has been presented in~\cite{Christiansen;Hu;Hu:2018finite} for $k\geq3$:
\begin{equation*}
\begin{tikzcd}
%\mathbb R \arrow{r}{\subset}
%&
{\rm Argy}_{k+2}
(\begin{pmatrix}
 2\\
 1
\end{pmatrix}
)
\arrow{r}{\curl}
 &
 {\rm Herm}_{k+1}(
\begin{pmatrix}
 1\\
 0
\end{pmatrix};\mathbb R^2)
   \arrow{r}{\div}
 &
 \mathbb V_{k}^{L^2}
 (
\begin{pmatrix}
 0\\
 -1
\end{pmatrix} )
% \arrow{r}{}
% & 0 \\
\\
 %
% \mathbb R \arrow{r}{\subset}
%&
 {\rm Herm}_{k+1}(
\begin{pmatrix}
 1\\
 0
\end{pmatrix};\mathbb R^2)
 \arrow[ur,swap,"{\rm id}"] \arrow{r}{\curl}
 & 
{\rm Sten}_k( 
 \begin{pmatrix}
 0\\
 -1
\end{pmatrix}; \mathbb M
) 
 \arrow[ur,swap,"{\rm -2\sskw}"] \arrow{r}{\div}
 & 
 {\rm DG}_{k-1}(
 \begin{pmatrix}
 -1\\
 -1
\end{pmatrix}; \mathbb R^2 )
%\arrow[r] 
% &0 
\end{tikzcd}.
\end{equation*}
This will lead to the elasticity complex
\begin{equation}\label{hessiancomplexchh}
% \resizebox{1.0\hsize}{!}{$
\bs{RM} \xrightarrow{\subset} {\rm Argy}_{k+2}(
\begin{pmatrix}
2\\
1 
\end{pmatrix}
)\xrightarrow{\air}{\rm HZ}_{k}( 
\begin{pmatrix}
0\\
-1 
\end{pmatrix}; \mathbb S
) \xrightarrow{\div} {\rm DG}_{k-1}(
\begin{pmatrix}
 -1\\
 -1
\end{pmatrix}; \mathbb R^2
)\xrightarrow{}\boldsymbol{0}.
% $}
\end{equation}
% The finite element elasticity complex based on Arnold-Winther element~\cite{Arnold;Winther:2002finite} was reconstructed in~\cite{ArnoldFalkWinther2006b} by a different discrete BGG diagram.

We then present another example with rotated differential operators and use $\mathbb V^{\rot}(\bs r_1,\bs r_2)$ to increase the smoothness of the last space. The finite element BGG diagram for $k\geq5$
\begin{equation*}
\begin{tikzcd}
%\mathbb R \arrow{r}{\subset}
%&
{\rm Argy}_{k+2}(
\begin{pmatrix}
 2\\
 1
\end{pmatrix})
\arrow{r}{\grad}
 &
 {\rm Herm}_{k+1}(
\begin{pmatrix}
 1\\
 0
\end{pmatrix};\mathbb R^2)
   \arrow{r}{\rot}
 &
 \mathbb V_{k}^{L^2}(
\begin{pmatrix}
 0\\
 -1
\end{pmatrix} )
% \arrow{r}{}
% & 0 \\
\\
 %
% \mathbb R \arrow{r}{\subset}
%&
 {\rm Herm}_{k+1}(
\begin{pmatrix}
 1\\
 0
\end{pmatrix};\mathbb R^2)
 \arrow[ur,swap,"{\rm id}"] \arrow{r}{\grad}
 & 
\mathbb V^{\rot}_k( 
 \begin{pmatrix}
 0\\
 -1
\end{pmatrix}, 
 \begin{pmatrix}
0\\
0
\end{pmatrix}
; \mathbb M
) 
 \arrow[ur,swap,"{\rm -2\sskw}"] \arrow{r}{\rot}
 & 
 {\rm Lagrange}_{k-1}(
 \begin{pmatrix}
 0\\
 0
\end{pmatrix}; \mathbb R^2 )
%\arrow[r] 
% &0 
\end{tikzcd}
\end{equation*}
will lead to the finite element Hessian complex constructed in~\cite{Chen;Huang:2021Finite}
\begin{equation}\label{eq:hesscomplex2dfem}
\mathbb P_1\xrightarrow{\subset} V_{k+2}^{\hess}\xrightarrow{\nabla^2}\bs\Sigma_{k}^{\rm rot}\xrightarrow{\rot} \bs V_{k-1}^{\grad}\xrightarrow{}\boldsymbol{0}.
\end{equation}
Note that complex \eqref{eq:hesscomplex2dfem} is not a rotation of complex \eqref{hessiancomplexchh} as complex \eqref{eq:hesscomplex2dfem} ends at a continuous Lagrange element.

We now present the general case.

\begin{theorem}
Let $\bs r_1\geq -1$ and $\bs r_2\geq \bs r_1 \ominus 1$ satisfying $r_1^{\texttt{v}}\geq 2 \, r_1^e + 2, r_2^{\texttt{v}}\geq 2 \, r_2^e,$ and let polynomial degree $k\geq \max\{2r_1^{\texttt{v}} + 3, 2r_2^{\texttt{v}}+2, 3r_2^e+5, 5[r_2^e=-1, r_2^{\texttt{v}}=1], 3[r_2^e=-1, r_2^{\texttt{v}}=0]\}$.
Then we have the BGG diagram
\begin{equation}\label{eq:BGGelasticity}
\begin{tikzcd}[column sep=0.7cm]
\mathbb R \arrow{r}{\subset}
&
\mathbb V_{k+2}^{\curl}(\bs r_1+2)
\arrow{r}{\curl}
 &
\mathbb V_{k+1}^{\div} (\bs r_1+1)
   \arrow{r}{\div}
 &
\mathbb V^{L^2}_k(\bs r_1)
 \arrow{r}{}
 & 0 \\
\mathbb R^2  \arrow[ur,swap, near end, "{\rm \cdot(-\bs x)^{\perp}}"]\arrow{r}{\subset}
&
\mathbb V_{k+1}^{\curl}(\bs r_1+1;\mathbb R^2)
 \arrow[ur,swap,"{\rm id}"] \arrow{r}{\curl}
 & 
\mathbb V_{k}^{\div}(\bs r_1, \bs r_2;\mathbb M) 
 \arrow[ur,swap,"{\rm -2\sskw}"] \arrow{r}{\div}
 & 
\mathbb V^{L^2}_{k-1}(\bs r_2; \mathbb R^2) 
\arrow[r] 
 &\boldsymbol{0} 
\end{tikzcd}
\end{equation}
which leads to the finite element elasticity complex
\begin{equation}\label{eq:elasticityfemcomplex}
{\mathbb P}_1\xrightarrow{\subset} \mathbb V_{k+2}^{\curl}(\bs r_1+2)\xrightarrow{\air}\mathbb V_{k}^{\div}(\bs r_1, \bs r_2; \mathbb S)\xrightarrow{\div}  \mathbb V^{L^2}_{k-1}(\bs r_2; \mathbb R^2)\xrightarrow{}\boldsymbol{0},
\end{equation}
where $\mathbb V_{k}^{\div}(\bs r_1, \bs r_2; \mathbb S):=\mathbb V_{k}^{\div}(\bs r_1, \bs r_2;\mathbb M)\cap\ker(\sskw)$.
\end{theorem}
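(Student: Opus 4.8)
The plan is to identify~\eqref{eq:elasticityfemcomplex} as the output of the Bernstein--Gelfand--Gelfand construction recalled in Section~\ref{sec:hilbertcomplex}, applied to the two rows of the diagram~\eqref{eq:BGGelasticity}, and then to invoke the abstract BGG theorem of~\cite{Arnold;Hu:2020Complexes}, which yields exactness of the derived complex as soon as both input rows are exact and the linking maps satisfy anti-commutativity together with the $J$-injectivity/surjectivity condition. Thus the proof reduces to three verifications: exactness of the two rows, well-definedness of the linking maps with the correct injectivity/surjectivity pattern, and the anti-commutativity relation.

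First I would check that both rows are exact finite element de Rham complexes by direct appeal to Theorem~\ref{thm:nodalfemderhamcomplex}. For the top row one substitutes the smoothness triple $(\bs r_1+2,\bs r_1+1,\bs r_1)$ and degree $k+1$; the relations $\bs r_1+2=(\bs r_1+1)+1$ and $\bs r_1=\max\{\bs r_1,-1\}$ hold, while the vertex and degree hypotheses become $r_1^{\texttt{v}}+1\geq 2(r_1^e+1)+1$ and $k+1\geq 2(r_1^{\texttt{v}}+1)+2$, i.e.\ exactly the strengthened assumptions $r_1^{\texttt{v}}\geq 2r_1^e+2$ and $k\geq 2r_1^{\texttt{v}}+3$. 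The bottom row is the $\mathbb R^2$-tensor of the scalar de Rham complex with triple $(\bs r_1+1,\bs r_1,\bs r_2)$ and degree $k$, whose hypotheses reduce to $r_1^{\texttt{v}}\geq 2r_1^e+1$, $r_2^{\texttt{v}}\geq 2r_2^e$, $k\geq\max\{2r_1^{\texttt{v}}+2,2r_2^{\texttt{v}}+2\}$ and $\dim\mathbb B_{k-1}(\bs r_2)\geq1$; the last follows from $\dim\mathbb B_{k-1}^2(\bs r_2)\geq3$, and tensoring an exact complex with $\mathbb R^2$ preserves exactness.

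Next I would verify the BGG structure. The crucial point is that $S_0={\rm id}$ is a well-defined bijection: the space $\mathbb V_{k+1}^{\div}(\bs r_1+1)$ in the top row carries full $C^{r_1^e+1}$ continuity of the vector field with vertex smoothness $r_1^{\texttt{v}}+1$, and such continuity automatically forces its divergence to have smoothness $(r_1^{\texttt{v}},r_1^e)=\bs r_1$, so the extra divergence degrees of freedom are redundant and this space coincides with the fully continuous vector space $\mathbb V_{k+1}^{\curl}(\bs r_1+1;\mathbb R^2)$ of the bottom row. The map $S_1=-2\,\sskw$ is surjective onto $\mathbb V_k^{L^2}(\bs r_1)$ with kernel the symmetric-tensor subspace, so taking $J=0$ the operator $s_0$ is injective while $s_0$ and $s_1$ are both surjective, as the $J$-condition demands. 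Anti-commutativity $S_1\tilde d_0=-d_1S_0$ is precisely the identity $\div\bs v=2\,\sskw(\curl\bs v)$ of~\eqref{eq:anticommutativeprop1}.

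With these checks the abstract BGG theorem delivers the exact complex with $\varUpsilon_0=\mathbb V_{k+2}^{\curl}(\bs r_1+2)$, $\varUpsilon_1=\mathbb V_k^{\div}(\bs r_1,\bs r_2;\mathbb M)\cap\ker(\sskw)=\mathbb V_k^{\div}(\bs r_1,\bs r_2;\mathbb S)$, and $\varUpsilon_2=\mathbb V_{k-1}^{L^2}(\bs r_2;\mathbb R^2)$, with connecting maps $\mathcal D_0=\tilde d_0 S_0^{-1}d_0=\curl\curl=\air$ and $\mathcal D_1=\div$; here one uses that $\air v$ is symmetric so $\mathcal D_0$ indeed lands in $\varUpsilon_1$. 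Since both rows are exact, the derived elasticity complex is exact, and identifying the initial kernel $\ker(\air)=\mathbb P_1$ yields~\eqref{eq:elasticityfemcomplex}. I expect the main obstacle to be the first step: correctly matching the shifted parameter triples and degrees so that the hypotheses of Theorem~\ref{thm:nodalfemderhamcomplex} are met for the top row---in particular recognizing that the strengthened vertex bound $r_1^{\texttt{v}}\geq 2r_1^e+2$ and the degree bound $k\geq 2r_1^{\texttt{v}}+3$ are exactly what the top row requires---together with confirming that $\dim\mathbb B_{k-1}^2(\bs r_2)\geq3$ supplies the bubble nonemptiness the bottom row needs, and that the identification $\mathbb V_{k+1}^{\div}(\bs r_1+1)=\mathbb V_{k+1}^{\curl}(\bs r_1+1;\mathbb R^2)$ making $S_0$ a genuine isomorphism holds at the discrete level.
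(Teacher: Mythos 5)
Your route is the one the paper explicitly declines to take: the authors remark that the BGG framework of Arnold--Hu would deliver \eqref{eq:elasticityfemcomplex}, but then give a self-contained two-step diagram chase instead (Step 1: $\air \mathbb V_{k+2}^{\curl}(\bs r_1+2)=\mathbb V_{k}^{\div}(\bs r_1,\bs r_2;\mathbb S)\cap\ker(\div)$ by lifting through the bottom row, killing the divergence via \eqref{eq:anticommutativeprop1}, and lifting again through the top row; Step 2: surjectivity of $\div$ on the symmetric space by taking a preimage $\bs\tau$ in the $\mathbb M$-valued space and symmetrizing it to $\bs\tau+\curl\bs v$ with $\div\bs v=-2\sskw\bs\tau$). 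Your bookkeeping for the exactness of the two rows is correct --- the shifted triples, the way $r_1^{\texttt{v}}\geq 2r_1^e+2$ and $k\geq 2r_1^{\texttt{v}}+3$ are exactly what the top row needs, and $\dim\mathbb B_{k-1}^2(\bs r_2)\geq3\Rightarrow\dim\mathbb B_{k-1}(\bs r_2)\geq1$ for the bottom row --- as are the identification $S_0={\rm id}$ and the anti-commutativity via \eqref{eq:anticommutativeprop1}.

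The gap is the surjectivity (and, more mildly, the well-definedness) of $S_1=-2\sskw:\mathbb V_{k}^{\div}(\bs r_1,\bs r_2;\mathbb M)\to\mathbb V^{L^2}_{k}(\bs r_1)$, which you assert without argument. This does not follow from the pointwise surjectivity of $\sskw:\mathbb M\to\mathbb R$, because the discrete diagram is not literally of the tensor-product form $\mathcal V_{i}\otimes\mathbb E_{i}$ required by the abstract theorem: $\mathbb V_{k}^{\div}(\bs r_1,\bs r_2;\mathbb M)$ is two copies of a vector-valued space carrying extra constraints on its divergence, not $\mathbb V^{L^2}_{k}(\bs r_1)\otimes\mathbb M$, so the $J$-surjectivity hypothesis must be verified for the actual finite element spaces and the abstract theorem cannot be cited verbatim. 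The paper's proof opens with precisely this verification: for $q\in\mathbb V^{L^2}_{k}(\bs r_1)$ one solves $\div\bs v=q$ with $\bs v\in\mathbb V_{k+1}^{\div}(\bs r_1+1)=\mathbb V_{k+1}^{\curl}(\bs r_1+1;\mathbb R^2)$ using the exactness of the top row, notes that $\curl\bs v\in\mathbb V_{k}^{\div}(\bs r_1,\bs r_2;\mathbb M)$ because the bottom row is a complex, and concludes $q=2\sskw(\curl\bs v)$ from \eqref{eq:anticommutativeprop1}. Supplying that paragraph (together with the observation that $\sskw\bs\tau$ inherits the $\bs r_1$ smoothness) closes your argument; and once the rows, the two linking maps, and the anti-commutativity are all in hand, the zig-zag that the abstract theorem encodes is exactly the paper's Steps 1--2, so writing the chase out directly costs nothing extra.
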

\begin{proof}
First we show that $\sskw\mathbb V_{k}^{\div}(\bs r_1, \bs r_2; \mathbb M) =\mathbb V^{L^2}_k(\bs r_1)$. For $q\in\mathbb V^{L^2}_k(\bs r_1)$, by the exactness of the complex in the top line of~\eqref{eq:BGGelasticity}, there exists $\boldsymbol{v}_h\in\mathbb V_{k+1}^{\div} (\bs r_1)=\mathbb V_{k+1}^{\curl}(\bs r_1)$ such that $\div\boldsymbol{v}_h=q_h$. Then we get from the anti-commutative property~\eqref{eq:anticommutativeprop1} that $q_h=2\,\sskw(\curl \boldsymbol{v}_h)$.

Again condition $k\geq \max\{2r_2^{\texttt{v}}+2, 3r_2^e+5, 5[r_2^e=-1, r_2^{\texttt{v}}=1], 3[r_2^e=-1, r_2^{\texttt{v}}=0]\}$ ensures $\dim\mathbb B_{k-1}(\bs r_2)\geq3$.

We can apply the BGG framework in~\cite{Arnold;Hu:2020Complexes} to get the complex~\eqref{eq:elasticityfemcomplex} and its exactness. In two dimensions, we will provide a simple proof without invoking the machinery. 

Clearly~\eqref{eq:elasticityfemcomplex} is a complex. We prove the exactness of complex~\eqref{eq:elasticityfemcomplex} in two steps.

\medskip

{\it Step 1. Prove $\air \mathbb V_{k+2}^{\curl}(\bs r_1+2) = \mathbb V_{k}^{\div}(\bs r_1, \bs r_2; \mathbb S)\cap \ker(\div)$.} For $\boldsymbol{\tau}\in \mathbb V_{k}^{\div}(\bs r_1, \bs r_2; \mathbb S)\cap \ker(\div)$, by the bottom complex in~\eqref{eq:BGGelasticity}, there exists $\boldsymbol{v}\in\mathbb V_{k+1}^{\curl}(\bs r_1+1;\mathbb R^2)$ such that $\boldsymbol{\tau}=\curl\boldsymbol{v}$. Then it follows from~\eqref{eq:anticommutativeprop1} that
$$
\div\boldsymbol{v}=2\,\sskw(\curl \boldsymbol{v})=2\,\sskw\boldsymbol{\tau}=0.
$$
By the exactness of the top de Rham complex, there exists $q\in\mathbb V_{k+2}^{\curl}(\bs r_1+2)$ such that $\boldsymbol{v}=\curl q$. Thus $\boldsymbol{\tau}=\curl\boldsymbol{v}=\air q\in\air \mathbb V_{k+2}^{\curl}(\bs r_1+2)$.

{\it Step 2. Prove $\div\mathbb V_{k}^{\div}(\bs r_1, \bs r_2; \mathbb S)=\mathbb V^{L^2}_{k-1}(\bs r_2; \mathbb R^2)$.}
% First 
% \begin{equation}\label{eq:AirkerdivS}
% \air \mathbb V_{k+2}^{\curl}(\bs r_1+2) = \mathbb V_{k}^{\div}(\bs r_1, \bs r_2; \mathbb S)\cap \ker(\div)
% \end{equation}
% using the fact $\curl q\in \mathbb P_k(T)$ iff $q\in \mathbb P_{k+1}(T)$. 
% Then it suffices to prove the mapping
% $$
% \div: \mathbb V_{k}^{\div}(\bs r_1, \bs r_2; \mathbb S) \to  \mathbb V^{L^2}_{k-1}(\bs r_2; \mathbb R^2)
% $$
% is surjective. 
As $\div: \mathbb V_{k}^{\div}(\bs r_1, \bs r_2; \mathbb M) \to  \mathbb V^{L^2}_{k-1}(\bs r_2; \mathbb R^2)$ is surjective, given a $\bs q\in \mathbb V^{L^2}_{k-1}(\bs r_2; \mathbb R^2)$, we can find $\bs \tau \in  \mathbb V_{k}^{\div}(\bs r_1, \bs r_2; \mathbb M)$ such that $\div \bs \tau = \bs q.$ By the diagram~\eqref{eq:BGGelasticity}, we can find $\bs v\in \mathbb V_{k+1}^{\div}(\bs r_1+1)$ such that $\div \bs v = -2\sskw\bs \tau$. Set $\bs \tau^s = \bs \tau + \curl \bs v$. Then $\div \bs \tau^s = \div \bs \tau = \bs q$ and $2\sskw\bs \tau^s = 2\sskw\, \bs \tau + 2\sskw\, \curl \bs v = 2\sskw\bs \tau +\div \bs v = 0$, i.e. $\bs \tau^s$ is symmetric. Therefore we find $\bs \tau^s \in \mathbb V_{k}^{\div}(\bs r_1, \bs r_2; \mathbb S)$ and $\div \bs \tau^s = \bs q$. 
% The degrees of freedom for Hu-Zhang element are
% \begin{align}
% \boldsymbol \tau (\delta) & \quad\forall~\delta\in \mathcal V(K), \label{HdivSBDMfemdof1}\\
% (\boldsymbol \tau\boldsymbol n, \boldsymbol q)_e & \quad\forall~\boldsymbol q\in \mathbb P_{k-2}^2(e),  e\in\mathcal E(K),\label{HdivSBDMfemdof2} \\
% (\boldsymbol{t}^{\intercal}\boldsymbol \tau\boldsymbol{t}, q)_e & \quad\forall~q\in \mathbb P_{k-2}(e),  e\in\mathcal E(K),\label{HdivSBDMfemdof3} \\
% (\boldsymbol \tau, \boldsymbol q)_K &\quad \forall~\boldsymbol q\in \mathbb P_{k-3}(K;\mathbb S)
% \label{HdivSBDMfemdof4}
% \end{align}
\end{proof}

In~\eqref{eq:elasticityfemcomplex}, $\mathbb V_{k}^{\div}(\bs r_1, \bs r_2; \mathbb S)$ is defined as $\mathbb V_{k}^{\div}(\bs r_1, \bs r_2;\mathbb M)\cap\ker(\sskw)$.
Next we give the finite element description of space $\mathbb V_{k}^{\div}(\bs r_1, \bs r_2; \mathbb S)$ and thus can obtain locally supported basis. On each triangle, we take $\mathbb P_{k}(T;\mathbb S)$ as the shape function space. By symmetrizing DoFs~\eqref{eq:2dCrdivfemdof0}-\eqref{eq:2dCrdivfemdof6}, we propose the following local DoFs for space $\mathbb V_{k}^{\div}(\bs r_1, \bs r_2; \mathbb S)$
\begin{subequations}\label{eq:smoothdivSdof}
\begin{align}
\nabla^i\boldsymbol{\tau}(\texttt{v}), & \quad \texttt{v}\in \Delta_{0}(T), i=0,\ldots, r_1^{\texttt{v}}, \label{eq:divSdof1}\\
\nabla^j\div\boldsymbol{\tau}(\texttt{v}),  & \quad \texttt{v}\in \Delta_{0}(T), j=r_1^{\texttt{v}},\ldots, r_2^{\texttt{v}}, \label{eq:divSdof2}\\
\int_e (\boldsymbol{\tau}\boldsymbol{n})\cdot\boldsymbol{q}\dd s, &\quad \boldsymbol{q}\in \mathbb P_{k- 2(r_1^{\texttt{v}} +1)}^2 (e), e\in \Delta_{1}(T),\label{eq:divSdof3}\\
\int_e \partial_n^i(\boldsymbol{t}^{\intercal}\boldsymbol{\tau}\boldsymbol{t})\ q \dd s, &\quad  q\in \mathbb P_{k - 2(r_1^{\texttt{v}} +1)+i} (e), e\in \Delta_{1}(T),  i=0,\ldots, r_1^{e}, \label{eq:divSdof4}\\
\int_e \partial_n^i(\div\boldsymbol{\tau})\cdot\boldsymbol{q} \dd s, &\quad \boldsymbol{q}\in \mathbb P_{k-1 - 2(r_2^{\texttt{v}} +1)+i}^2(e), e\in \Delta_{1}(T),  i=0,\ldots, r_2^{e}, \label{eq:divSdof5}\\
\int_T \div\boldsymbol{\tau} \cdot\boldsymbol{q}\dx, &\quad \boldsymbol{q}\in \mathbb B_{k-1}^2(\boldsymbol{r}_2)/\boldsymbol{RM}, \label{eq:divSdof6}\\
\int_T \boldsymbol{\tau}:\boldsymbol{q} \dx, &\quad \boldsymbol{q}\in \air\mathbb B_{k+2}(\bs r_1+2). \label{eq:divSdof7}
\end{align}
\end{subequations}

\begin{lemma}\label{lem:PkSunisolvence}
The DoFs~\eqref{eq:divSdof1}-\eqref{eq:divSdof7} are uni-solvent for $\mathbb P_{k}(T;\mathbb S)$.
\end{lemma}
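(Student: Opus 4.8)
The plan is to mirror the proof of Lemma~\ref{lem:2dCrdivfemunisolvence}: first check that the number of DoFs \eqref{eq:divSdof1}--\eqref{eq:divSdof7} equals $\dim\mathbb P_k(T;\mathbb S)=3\binom{k+2}{2}$, and then prove that a field annihilated by all of them is zero. For the count I would observe, exactly as there, that the DoFs carrying $\boldsymbol{r}_2$ and $\div\boldsymbol{\tau}$, namely \eqref{eq:divSdof2}, \eqref{eq:divSdof5}, and \eqref{eq:divSdof6}, have a total cardinality independent of $\boldsymbol{r}_2$ (raising $r_2^{\texttt{v}}$ or $r_2^e$ transfers DoFs from the interior moment \eqref{eq:divSdof6} to the vertex/edge moments \eqref{eq:divSdof2}, \eqref{eq:divSdof5} in equal number), so it suffices to count at $\boldsymbol{r}_2=\max\{\boldsymbol{r}_1-1,-1\}$. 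There the interior part \eqref{eq:divSdof6}--\eqref{eq:divSdof7} contributes $\dim\mathbb B_{k-1}^2(\boldsymbol{r}_2)-3+\dim\mathbb B_{k+2}(\boldsymbol{r}_1+2)$, using that $\air$ is injective on $\mathbb B_{k+2}(\boldsymbol{r}_1+2)$ because a nonzero element of this bubble space vanishes on $\partial T$ and hence cannot lie in $\ker\air=\mathbb P_1$; the surviving boundary DoFs are the symmetric traces, and the bookkeeping then reproduces $3\binom{k+2}{2}$ just as in the matrix case.

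For injectivity, let $\boldsymbol{\tau}\in\mathbb P_k(T;\mathbb S)$ annihilate \eqref{eq:divSdof1}--\eqref{eq:divSdof7}. \textbf{Step 1: $\div\boldsymbol{\tau}=0$.} The field $\div\boldsymbol{\tau}\in\mathbb P_{k-1}(T;\mathbb R^2)$ is pinned down by the vector Argyris-type DoFs of $\mathbb V^{L^2}_{k-1}(\boldsymbol{r}_2;\mathbb R^2)$ from Theorem~\ref{thm:Cr2dfemunisolvence}: its vertex derivatives up to order $r_2^{\texttt{v}}$ come from \eqref{eq:divSdof1} (orders $\le r_1^{\texttt{v}}-1$) and \eqref{eq:divSdof2} (orders $r_1^{\texttt{v}},\dots,r_2^{\texttt{v}}$), its edge normal-derivative moments from \eqref{eq:divSdof5}, and its interior moments against $\mathbb B_{k-1}^2(\boldsymbol{r}_2)$ from \eqref{eq:divSdof6}, except for the moments against $\boldsymbol{RM}$. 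These last are recovered by integration by parts: for $\boldsymbol{q}\in\boldsymbol{RM}$ the gradient $\nabla\boldsymbol{q}$ is skew-symmetric, so $\boldsymbol{\tau}:\nabla\boldsymbol{q}=0$ by symmetry of $\boldsymbol{\tau}$, and
\[
\int_T\div\boldsymbol{\tau}\cdot\boldsymbol{q}\dx=\int_{\partial T}(\boldsymbol{\tau}\boldsymbol{n})\cdot\boldsymbol{q}\dd s=0,
\]
the boundary term vanishing because $\boldsymbol{q}|_e\in\mathbb P_1(e)\subseteq\mathbb P_{k-2(r_1^{\texttt{v}}+1)}(e)$ (here $k\ge2r_1^{\texttt{v}}+3$ is used) is tested by the vanishing traction DoF \eqref{eq:divSdof3}. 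Thus all $\mathbb V^{L^2}_{k-1}(\boldsymbol{r}_2;\mathbb R^2)$-DoFs of $\div\boldsymbol{\tau}$ vanish, forcing $\div\boldsymbol{\tau}=0$.

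\textbf{Step 2: $\boldsymbol{\tau}=0$.} Since $\boldsymbol{\tau}$ is symmetric and divergence-free, the polynomial elasticity (Airy) exactness on $T$ gives $\boldsymbol{\tau}=\air w$ for some $w\in\mathbb P_{k+2}(T)$, unique modulo $\mathbb P_1$. I would then show $w$ can be taken in $\mathbb B_{k+2}(\boldsymbol{r}_1+2)$: writing the symmetric traces of $\boldsymbol{\tau}=\curl\curl w$ in terms of $w$, the vanishing of \eqref{eq:divSdof1} forces $\nabla^j w(\texttt{v})=0$ for $2\le j\le r_1^{\texttt{v}}+2$, while the vanishing of \eqref{eq:divSdof4} (together with \eqref{eq:divSdof3}) forces the normal derivatives $\partial_n^j w|_e=0$ for $2\le j\le r_1^e+2$; subtracting the element of $\mathbb P_1$ matching the order-$\le1$ vertex data (which does not change $\boldsymbol{\tau}$) places $w$ in $\mathbb B_{k+2}(\boldsymbol{r}_1+2)$ by the geometric decomposition of Lemma~\ref{lem:geodecomp2d}. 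Consequently $\boldsymbol{\tau}=\air w\in\air\mathbb B_{k+2}(\boldsymbol{r}_1+2)$, and choosing $\boldsymbol{q}=\boldsymbol{\tau}$ in the interior DoF \eqref{eq:divSdof7} yields $\int_T|\boldsymbol{\tau}|^2\dx=0$, i.e.\ $\boldsymbol{\tau}=0$.

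The main obstacle is the identification in Step 2 of the Airy potential $w$ as a bubble in $\mathbb B_{k+2}(\boldsymbol{r}_1+2)$: this requires converting the vanishing of the \emph{symmetric} edge traces $\boldsymbol{\tau}\boldsymbol{n}$ and $\boldsymbol{t}^{\intercal}\boldsymbol{\tau}\boldsymbol{t}$, together with the vertex jets of $\boldsymbol{\tau}$, into the vanishing of the normal-derivative and vertex-jet data of $w$, and then checking that the surviving index ranges coincide exactly with those defining $\mathbb B_{k+2}(\boldsymbol{r}_1+2)$. It is precisely this matching that invokes the standing hypotheses $r_1^{\texttt{v}}\ge2r_1^e+2$ and $k\ge2r_1^{\texttt{v}}+3$. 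By contrast, the dimension count and the $\boldsymbol{RM}$ integration-by-parts step are routine.
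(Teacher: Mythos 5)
Your proposal follows the same route as the paper's own proof: match the DoF count (using that the total over the $\boldsymbol{r}_2$-dependent DoFs \eqref{eq:divSdof2}, \eqref{eq:divSdof5}, \eqref{eq:divSdof6} is constant in $\boldsymbol{r}_2$), recover the rigid-motion moments of $\div\boldsymbol{\tau}$ by integration by parts using the symmetry of $\boldsymbol{\tau}$ and the vanishing traction DoF \eqref{eq:divSdof3}, conclude $\div\boldsymbol{\tau}=\boldsymbol{0}$ from the unisolvence of $\mathbb V^{L^2}_{k-1}(\bs r_2;\mathbb R^2)$, show $\boldsymbol{\tau}\in\air\mathbb B_{k+2}(\bs r_1+2)$, and finish with \eqref{eq:divSdof7}. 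The only cosmetic difference is that you organize the membership $\boldsymbol{\tau}\in\air\mathbb B_{k+2}(\bs r_1+2)$ through the explicit Airy potential $w$ and Lemma~\ref{lem:geodecomp2d} while the paper argues directly on the edge traces of $\boldsymbol{\tau}$; both leave the same index bookkeeping to the reader (one small imprecision: the edge DoFs only annihilate the normal-derivative \emph{moments} of $w$ against reduced-degree spaces, not $\partial_n^j w|_e$ pointwise, so the conclusion is bubble-space membership rather than trace vanishing, exactly as your closing paragraph acknowledges).
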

\begin{proof}
The number of DoFs~\eqref{eq:divSdof2} and~\eqref{eq:divSdof5}-\eqref{eq:divSdof6} is
$
2\dim\mathbb P_{k-1}(T)-3-6{r_1^{\texttt{v}}+1\choose2}.
$
Then the number of DoFs~\eqref{eq:divSdof1}-\eqref{eq:divSdof7} is
\begin{align*}
&\quad 9{r_1^{\texttt{v}}+2\choose 2}+6(k-1- 2r_1^{\texttt{v}}) + 3\sum_{i=0}^{r_1^e}(k-1- 2r_1^{\texttt{v}}+i) + \dim \mathbb B_{k+2}(\bs r_1+2)
 \\
&\quad +2\dim\mathbb P_{k-1}(T)-3-6{r_1^{\texttt{v}}+1\choose2} =\dim\mathbb P_{k+2}(T)+2\dim\mathbb P_{k-1}(T)-3,
\end{align*}
by~\eqref{eq:dimensionpolyderham},
% \begin{align*}
% &9{r_1^{\texttt{v}}+2\choose 2}+6{r_2^{\texttt{v}}+2\choose 2}-6{r_1^{\texttt{v}}+1\choose 2} +6(k-1- 2r_1^{\texttt{v}}) \\ 
% &+ 3\sum_{i=0}^{r_1^e}(k-1- 2r_1^{\texttt{v}}+i) 
% +6\sum_{i=0}^{r_2^e}(k-2- 2r_2^{\texttt{v}}+i) \\
% &+2\left[{k-2-3r_2^e \choose 2}-3{r_2^{\texttt{v}}-2r_2^e \choose 2}\right]-3+{k-5-3r_1^e \choose 2}-3{r_1^{\texttt{v}}-2r_1^e-2\choose 2} \\
% =&\frac{3}{2}(k+1)(k+2),
% \end{align*}
which equals to $\dim\mathbb P_k(T;\mathbb S)$.

Take $\boldsymbol{\tau}\in\mathbb P_k(T;\mathbb S)$, and assume all the DoFs~\eqref{eq:divSdof1}-\eqref{eq:divSdof7} vanish. It follows from the integration by parts and~\eqref{eq:divSdof3} that
$$
(\div\boldsymbol{\tau},\boldsymbol{q})_T=0\quad\forall~\boldsymbol{q}\in\boldsymbol{RM}.
$$
Thanks to DoFs~\eqref{eq:divSdof1}-\eqref{eq:divSdof2} and~\eqref{eq:divSdof5}-\eqref{eq:divSdof6}, we get $\div\boldsymbol{\tau}=\boldsymbol{0}$.
On each edge $e$,
\begin{align*}
\partial_n^i(\div\boldsymbol{\tau})&=\partial_n^i(\div(\boldsymbol{n}^{\intercal}\boldsymbol{\tau}))\boldsymbol{n}+\partial_n^i(\div(\boldsymbol{t}^{\intercal}\boldsymbol{\tau}))\boldsymbol{t} \\
&=\partial_n^i\big(\partial_n(\boldsymbol{n}^{\intercal}\boldsymbol{\tau}\boldsymbol{n})+\partial_t(\boldsymbol{n}^{\intercal}\boldsymbol{\tau}\boldsymbol{t})\big)\boldsymbol{n}+\partial_n^i\big(\partial_n(\boldsymbol{t}^{\intercal}\boldsymbol{\tau}\boldsymbol{n})+\partial_t(\boldsymbol{t}^{\intercal}\boldsymbol{\tau}\boldsymbol{t})\big)\boldsymbol{t} \\
&=\big(\partial_n^{i+1}(\boldsymbol{n}^{\intercal}\boldsymbol{\tau}\boldsymbol{n})+\partial_t\partial_n^i(\boldsymbol{n}^{\intercal}\boldsymbol{\tau}\boldsymbol{t})\big)\boldsymbol{n}+\big(\partial_n^{i+1}(\boldsymbol{t}^{\intercal}\boldsymbol{\tau}\boldsymbol{n})+\partial_t\partial_n^i(\boldsymbol{t}^{\intercal}\boldsymbol{\tau}\boldsymbol{t})\big)\boldsymbol{t}.
\end{align*}
Then we acquire from DoFs~\eqref{eq:divSdof1}-\eqref{eq:divSdof5} that $\boldsymbol{\tau}\in\air\mathbb B_{k+2}(\bs r_1+2)$. Finally we get $\boldsymbol{\tau}=\boldsymbol{0}$ from the vanishing DoF~\eqref{eq:divSdof7}.
\end{proof}

Next we define the global finite element space and show it is $\mathbb V_{k}^{\div} (\bs r_1, \bs r_2; \mathbb S)$.
\begin{lemma}
It holds  
\begin{align}
\mathbb V_{k}^{\div} (\bs r_1, \bs r_2; \mathbb S) = \mathbb V:= \{\boldsymbol{\tau}\in \boldsymbol{L}^2(\Omega;\mathbb S):&\, \boldsymbol{\tau}|_T\in\mathbb P_{k}(T;\mathbb S)\;\forall~T\in\mathcal T_h, \notag\\
&\textrm{ all the DoFs~\eqref{eq:divSdof1}-\eqref{eq:divSdof5} are single-valued} \}. 
\notag%\label{eq:Vkdivr1r2S}
\end{align}
\end{lemma}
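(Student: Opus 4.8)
The plan is to prove the equality by comparing the two defining sets of degrees of freedom simplex by simplex. Both sides are spaces of piecewise symmetric polynomials: $\mathbb V$ has local shape function space $\mathbb P_{k}(T;\mathbb S)$ by construction, and since $\mathbb P_{k}(T;\mathbb M)\cap\ker(\sskw)=\mathbb P_{k}(T;\mathbb S)$, the space $\mathbb V_{k}^{\div}(\bs r_1,\bs r_2;\mathbb S)=\mathbb V_{k}^{\div}(\bs r_1,\bs r_2;\mathbb M)\cap\ker(\sskw)$ has the same local shape functions. Recall that $\mathbb V_{k}^{\div}(\bs r_1,\bs r_2;\mathbb M)=\mathbb V_{k}^{\div}(\bs r_1,\bs r_2)\times\mathbb V_{k}^{\div}(\bs r_1,\bs r_2)$ is the subspace of piecewise polynomials on which the DoFs \eqref{eq:2dCrdivfemdof0}--\eqref{eq:2dCrdivfemdof4} applied to each row are single-valued; equivalently, the traces $\nabla^{i}\bs\tau(\texttt{v})$ ($i\le r_1^{\texttt{v}}$), $\nabla^{j}\div\bs\tau(\texttt{v})$ ($r_1^{\texttt{v}}\le j\le r_2^{\texttt{v}}$), $\bs\tau\bs n|_{e}$ (order $0$), $\partial_n^{i}(\bs\tau\bs t)|_{e}$ ($i\le r_1^{e}$), and $\partial_n^{i}(\div\bs\tau)|_{e}$ ($i\le r_2^{e}$) agree across every interior edge $e$, using the localized edge unisolvence of Lemma~\ref{lem:edgeunisolvence2d} to pass between single-valued moments and single-valued traces. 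Thus it suffices to prove that, for a piecewise $\mathbb P_{k}(T;\mathbb S)$ tensor, these matrix traces are single-valued if and only if the symmetric DoFs \eqref{eq:divSdof1}--\eqref{eq:divSdof5} are.

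Next I would observe that the two DoF sets differ in only one place. The vertex DoFs \eqref{eq:divSdof1}--\eqref{eq:divSdof2} are identical to the matrix vertex traces (here $\max\{r_1^{\texttt{v}},0\}=r_1^{\texttt{v}}$ since $r_1^{\texttt{v}}\ge 2r_1^{e}+2\ge0$), the normal trace \eqref{eq:divSdof3} is exactly $\bs\tau\bs n|_e$, and \eqref{eq:divSdof5} is exactly $\partial_n^{i}(\div\bs\tau)|_e$. The only quantity present among the matrix traces but absent from \eqref{eq:divSdof1}--\eqref{eq:divSdof5} is the tangential--normal part $\partial_n^{i}(\bs n^{\intercal}\bs\tau\bs t)=\partial_n^{i}(\bs t^{\intercal}\bs\tau\bs n)$ of $\partial_n^{i}(\bs\tau\bs t)$ for $1\le i\le r_1^{e}$; its tangential--tangential companion $\partial_n^{i}(\bs t^{\intercal}\bs\tau\bs t)$ is \eqref{eq:divSdof4}, and the $i=0$ value of $\bs t^{\intercal}\bs\tau\bs n$ is contained in \eqref{eq:divSdof3}. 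The inclusion $\mathbb V_{k}^{\div}(\bs r_1,\bs r_2;\mathbb S)\subseteq\mathbb V$ is then immediate, because \eqref{eq:divSdof1}--\eqref{eq:divSdof5} are all components of the matrix traces, so single-valuedness of the latter forces single-valuedness of the former.

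For the reverse inclusion $\mathbb V\subseteq\mathbb V_{k}^{\div}(\bs r_1,\bs r_2;\mathbb S)$ I would recover the missing tangential--normal traces. Taking the tangential component of the edge identity used in the proof of Lemma~\ref{lem:PkSunisolvence},
$$
\partial_n^{\,i+1}(\bs t^{\intercal}\bs\tau\bs n)=\partial_n^{i}(\bs t^{\intercal}\div\bs\tau)-\partial_t\,\partial_n^{i}(\bs t^{\intercal}\bs\tau\bs t),\qquad 0\le i\le r_1^{e}-1.
$$
For such $i$ one has $i\le r_1^{e}-1\le r_2^{e}$ by the hypothesis $\bs r_2\ge\max\{\bs r_1-1,-1\}$, so the first term on the right is single-valued by \eqref{eq:divSdof5}, while the second is a tangential derivative along $e$ of the single-valued quantity \eqref{eq:divSdof4}, hence single-valued. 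Together with the $i=0$ value from \eqref{eq:divSdof3}, this shows $\partial_n^{j}(\bs t^{\intercal}\bs\tau\bs n)|_e$ is single-valued for all $0\le j\le r_1^{e}$; by symmetry $\bs n^{\intercal}\bs\tau\bs t=\bs t^{\intercal}\bs\tau\bs n$, so combined with \eqref{eq:divSdof4} the full trace $\partial_n^{i}(\bs\tau\bs t)|_e$ is single-valued for $i\le r_1^{e}$. All matrix traces are therefore single-valued, so $\bs\tau\in\mathbb V_{k}^{\div}(\bs r_1,\bs r_2;\mathbb M)$, and being piecewise symmetric it lies in $\mathbb V_{k}^{\div}(\bs r_1,\bs r_2;\mathbb S)$.

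The routine points are the verifications that the moment degrees in \eqref{eq:divSdof3}--\eqref{eq:divSdof5}, together with the vertex DoFs \eqref{eq:divSdof1}, determine the edge traces, so that equality of DoF values is equivalent to equality of traces; this is exactly the edge unisolvence already established. The genuine obstacle is the recovery step above: one must use symmetry to collapse the two off-diagonal components into a single scalar and exploit the inequality $r_2^{e}\ge r_1^{e}-1$ so that the normal derivatives of $\div\bs\tau$ reach high enough order to reconstruct $\partial_n^{i}(\bs t^{\intercal}\bs\tau\bs n)$ all the way up to $i=r_1^{e}$. This is precisely where the prescribed smoothness relation between $\bs r_1$ and $\bs r_2$ is used, and it is the only place the argument would break down if that relation were weakened.
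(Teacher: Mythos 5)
Your proof is correct, but it takes a genuinely different route from the paper's. The paper asserts the inclusion $\mathbb V\subseteq\mathbb V_{k}^{\div}(\bs r_1,\bs r_2;\mathbb S)$ as ``apparent'' and then closes the gap by a global dimension count: it sums the single-valued DoFs over $\Delta_0(\mathcal T_h)$, $\Delta_1(\mathcal T_h)$, $\Delta_2(\mathcal T_h)$, simplifies with Euler's formula to $\dim\mathbb V=\dim\mathbb V^{L^2}_{k-1}(\bs r_2;\mathbb R^2)+\dim\air\mathbb V_{k+2}^{\curl}(\bs r_1+2)$, and matches this against the dimension of $\mathbb V_{k}^{\div}(\bs r_1,\bs r_2;\mathbb S)$ supplied by the exactness of the elasticity complex~\eqref{eq:elasticityfemcomplex}. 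You instead prove both inclusions locally, edge by edge, and never count dimensions. Your easy direction, $\mathbb V_{k}^{\div}(\bs r_1,\bs r_2;\mathbb S)\subseteq\mathbb V$, is the one that is genuinely immediate, since each of \eqref{eq:divSdof1}--\eqref{eq:divSdof5} is a fixed linear combination of the row-wise matrix DoFs~\eqref{eq:2dCrdivfemdof0}--\eqref{eq:2dCrdivfemdof4}; the inclusion the paper labels apparent is precisely the one needing work, because the symmetric DoF list omits $\partial_n^{i}(\bs t^{\intercal}\boldsymbol{\tau}\bs n)|_e$ for $1\le i\le r_1^{e}$, and you supply the missing argument through the edge identity $\partial_n^{i+1}(\bs t^{\intercal}\boldsymbol{\tau}\bs n)=\partial_n^{i}(\bs t^{\intercal}\div\boldsymbol{\tau})-\partial_t\partial_n^{i}(\bs t^{\intercal}\boldsymbol{\tau}\bs t)$ combined with $r_2^{e}\ge r_1^{e}-1$. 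What your route buys is a self-contained, purely local proof that does not invoke the exactness of~\eqref{eq:elasticityfemcomplex} or Euler's formula and that pinpoints exactly where the constraint $\bs r_2\ge\max\{\bs r_1-1,-1\}$ enters; what the paper's route buys is brevity given the machinery already established, together with the explicit dimension identity for $\mathbb V$, which is of independent use. Both arguments rest on the same edge-unisolvence fact (Lemma~\ref{lem:edgeunisolvence2d}) to pass between single-valued moments and single-valued traces, which you correctly flag as the routine part.
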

\begin{proof}
%Denote by $$ the space in the right hand side of~\eqref{eq:Vkdivr1r2S} for simplicity.
Apparently $\mathbb V\subseteq\mathbb V_{k}^{\div} (\bs r_1, \bs r_2; \mathbb S)$. By comparing DoFs and direct computation, we can show $\dim\mathbb V_{k}^{\div} (\bs r_1, \bs r_2; \mathbb S)=\dim\mathbb V$ and the desired result follows.
\end{proof}

%
%\begin{example}\rm
%The pair, for $k\geq 5$, which  is necessary to ensure $\dim\mathbb B_{k-1}(\boldsymbol{r}_2)\geq3$,
%% $\mathbb P_1\subseteq\, \stackrel{\circ}{\mathbb P}_{k-1 - 3(r_2^{\texttt{v}} + 1) }(\bs r_2)$,
%\begin{equation*}
%\mathbb V^{\div}_k( 
% \begin{pmatrix}
% 0\\
% -1
%\end{pmatrix}, 
% \begin{pmatrix}
%0\\
%0
%\end{pmatrix}
%; \mathbb S
%) 
%\stackrel{\div}{\longrightarrow}
% {\rm Lagrange}_{k-1}(
% \begin{pmatrix}
% 0\\
% 0
%\end{pmatrix}; \mathbb R^2 )
%\end{equation*}
%can be used to discretize the linear elasticity in the mixed form. The space for the symmetric stress is Hu-Zhang element with constraint $\div \bs\sigma$ continuous at vertices and edges. The displacement space is continuous. The obtained saddle point system will have smaller dimension compared with the Hu-Zhang element and discontinuous displacement pair. 
%%The requirement $k\geq 5$. \mnote{ do we have to?}
%\end{example}

\subsection{Finite element divdiv complexes}
%\LC{In BGG, the smoothness of the Sobolev spaces between mskw are the same. In discrete level, the smoothness at vertices and edges should match. So $\tilde r_1+1 = \bs r_1$.}
We first consider the case: the tensor finite element space is continuous. 
Let $\bs r_1\geq 0$ and $\bs r_2\geq \max\{\bs r_1 - 2, -1\}$. We introduce the space $\mathbb V_{k}^{\div\div^+} (\bs r_1, \bs r_2; \mathbb M)\subseteq \mathbb V_{k}^{\div} (\bs r_1)\times \mathbb V_{k}^{\div} (\bs r_1)$ with constraint on $\div\div \bs \tau$. 
The shape function space is $\mathbb P_{k}(T; \mathbb M)$ with $k\geq \max\{2r_1^{\texttt{v}} + 2, 2r_2^{\texttt{v}}+3\}$ and DoFs are
\begin{subequations}\label{eq:divdivdof}
\begin{align}
\nabla^i\boldsymbol{\tau}(\texttt{v}), & \quad \texttt{v}\in \Delta_{0}(T), i=0,\ldots, r_1^{\texttt{v}}, \label{eq:divdivdof0}\\
\nabla^j\div \div\boldsymbol{\tau}(\texttt{v}),  & \quad \texttt{v}\in \Delta_{0}(T), j= \max\{r_1^{\texttt{v}}-1,0\},\ldots, r_2^{\texttt{v}}, \label{eq:divdivdof2}\\
\int_e \boldsymbol{\tau}\boldsymbol{n} \cdot \bs q\dd s, &\quad  \bs q\in \mathbb P^2_{k- 2(r_1^{\texttt{v}} +1)} (e), e\in \Delta_{1}(T),\label{eq:divdivdof3}\\
\int_e \partial_n^i(\boldsymbol{\tau}\boldsymbol{t}) \cdot \bs q \dd s, &\quad \bs q\in \mathbb P_{k - 2(r_1^{\texttt{v}} +1)+i}^2(e), e\in \Delta_{1}(T),  i=0,\ldots, r_1^{e}, \label{eq:divdivdof31}\\
\int_e \boldsymbol{n}^{\intercal}\div\boldsymbol{\tau} q\dd s, &\quad  q\in \mathbb P_{k-1- 2r_1^{\texttt{v}}} (e), e\in \Delta_{1}(T),\label{eq:divdivdof32}\\
\int_e \partial_n^i(\boldsymbol{t}^{\intercal}\div\boldsymbol{\tau}) q\dd s, &\quad  q\in \mathbb P_{k-1- 2r_1^{\texttt{v}}+i} (e), e\in \Delta_{1}(T),  i=0,\ldots, r_1^{e}-1,\label{eq:divdivdof33}\\
\int_e \partial_n^i(\div\div\boldsymbol{\tau})\ q \dd s, &\quad  q\in \mathbb P_{k-2 - 2(r_2^{\texttt{v}} +1)+i} (e), e\in \Delta_{1}(T),  i=0,\ldots, r_2^{e}, \label{eq:divdivdof44}\\
\int_T \div\boldsymbol{\tau}\cdot \bs q\dx, &\quad \bs q\in \curl\mathbb B_{k}(\bs r_1), \label{eq:divdivdof5}\\
\int_T \div\div\boldsymbol{\tau}\, q\dx, &\quad q\in \mathbb B_{k-2}(\boldsymbol{r}_2)/\mathbb P_1(T), \label{eq:divdivdof55}\\
\int_T \boldsymbol{\tau}:\boldsymbol{q} \dx, &\quad \boldsymbol{q}\in \curl\mathbb B_{k+1}(\bs r_1+1;\mathbb R^2). \label{eq:divdivdof6}
\end{align}
\end{subequations}

%Twist these DoFs to make sure the top one is the following diagram is an exact sequence. 
%It is like adding constraint on $\div\div \bs \tau$ to the space $\mathbb V_{k-1}^{\div}(\bs r_1, \tilde{\bs r}_1) $ to match the dimension of the space $\mathbb V_{k-1}^{\div}(\tilde{\bs r}_1, \bs r_2).$
\begin{lemma}\label{lm:divdivM}
Assume $k\geq \max\{2r_1^{\texttt{v}} + 2, 2r_2^{\texttt{v}}+3, 3r_2^e+6, 6[r_2^e=-1, r_2^{\texttt{v}}=1], 4[r_2^e=-1, r_2^{\texttt{v}}=0]\}$.
The DoFs~\eqref{eq:divdivdof0}-\eqref{eq:divdivdof6} are uni-solvent for $\mathbb P_{k}(T; \mathbb M)$.  
\end{lemma}
\begin{proof}
The number of DoFs~\eqref{eq:divdivdof2}, \eqref{eq:divdivdof44} and \eqref{eq:divdivdof55} is
\begin{align*}  
\dim\mathbb P_{k-2}(T)-3-3{r_1^{\texttt{v}}\choose2} & = 3\sum_{i=1}^{r_1^e-1}(k-2r_1^v+i) + \dim\mathbb B_{k-2}(\max\{\boldsymbol{r}_1-2,-1\})-3.
\end{align*}
And
the number of DoFs~\eqref{eq:2dCrdivfemdof0}-\eqref{eq:2dCrdivfemdof6} for $\mathbb V_{k}^{\div} (\bs r_1, \bs r_1-1)$ minus the number of DoFs \eqref{eq:divdivdof0}, \eqref{eq:divdivdof3}-\eqref{eq:divdivdof33}, \eqref{eq:divdivdof5} and \eqref{eq:divdivdof6} is 
\begin{align*}
&\quad 3\sum_{i=1}^{r_1^e-1}(k-2r_1^v+i)+ 2\dim\mathbb B_{k-1}(\boldsymbol{r}_1-1)-\dim\mathbb B_{k}(\boldsymbol{r}_1)-2 \\
&=2\dim\mathbb P_{k-1}(T)-\dim\mathbb P_{k}(T)-6{r_1^{\texttt{v}}+1\choose2}+3{r_1^{\texttt{v}}+2\choose2}-6\sum_{i=0}^{r_1^e-1}(k-2r_1^v+i)\\
&\quad +3\sum_{i=0}^{r_1^e}(k-2r_1^v-1+i)+3\sum_{i=1}^{r_1^e-1}(k-2r_1^v+i)-2 \\
&=2\dim\mathbb P_{k-1}(T)-\dim\mathbb P_{k}(T)-3{r_1^{\texttt{v}}\choose2}-2,
\end{align*}
by \eqref{eq:dimensionpolyderham}, which equals to $\dim\mathbb P_{k-2}(T)-3-3{r_1^{\texttt{v}}\choose2}$. Hence the number of DoFs~\eqref{eq:divdivdof0}-\eqref{eq:divdivdof6} equals to $\dim\mathbb P_{k}(T; \mathbb M)$.

Take $\boldsymbol{\tau}\in\mathbb P_k(T;\mathbb M)$, and assume all the DoFs~\eqref{eq:divdivdof0}-\eqref{eq:divdivdof6} vanish. 
% Applying the integration by parts, it follows from \eqref{eq:divdivdof3} and \eqref{eq:divdivdof32} that
% $$
% (\div\div\boldsymbol{\tau}, q)_T=0\quad\forall~q\in\mathbb P_1(T).
% $$
% Thanks to DoFs~\eqref{eq:divdivdof0}-\eqref{eq:divdivdof2}, \eqref{eq:divdivdof44} and \eqref{eq:divdivdof55}, we get $\div\div\boldsymbol{\tau}=0$.
Let $\boldsymbol{v}=\div\boldsymbol{\tau}\in\mathbb P_{k-1}(T;\mathbb R^2)$.
Applying the integration by parts, it follows from \eqref{eq:divdivdof3} and \eqref{eq:divdivdof32} that
$$
(\div\boldsymbol{v}, q)_T=(\div\div\boldsymbol{\tau}, q)_T=0\quad\forall~q\in\mathbb P_1(T).
$$
Applying Lemma~\ref{lem:2dCrdivfemunisolvence}, i.e. the unisolvence of space $\mathbb V^{\div}_{k-1}(\boldsymbol{r}_1-1, \boldsymbol{r}_2)$, it follows from DoFs~\eqref{eq:divdivdof0}-\eqref{eq:divdivdof2} and~\eqref{eq:divdivdof32}-\eqref{eq:divdivdof55} that $\div\boldsymbol{\tau}=\boldsymbol{v}=\boldsymbol{0}$. Then $\boldsymbol{\tau}=\curl\boldsymbol{w}$ with some $\boldsymbol{w}\in\mathbb P_{k+1}(T;\mathbb R^2)$. Thanks to Theorem~\ref{thm:Cr2dfemunisolvence}, we derive $\boldsymbol{w}=\boldsymbol{0}$ and $\boldsymbol{\tau}=\boldsymbol{0}$ from DoFs \eqref{eq:divdivdof0}, \eqref{eq:divdivdof3}-\eqref{eq:divdivdof31} and \eqref{eq:divdivdof6}.
\end{proof}

%\LC{Global space: all tangential part are local.}

Define global $H(\div\div)$-conforming finite element space
\begin{align*}
\mathbb V_{k}^{\div\div^+} (\bs r_1, \bs r_2; \mathbb M) = \{\boldsymbol{\tau}\in \boldsymbol{L}^2(\Omega;\mathbb M):&\, \boldsymbol{\tau}|_T\in\mathbb P_{k}(T;\mathbb M)\;\forall~T\in\mathcal T_h, \\
&\textrm{ all the DoFs~\eqref{eq:divdivdof0}-\eqref{eq:divdivdof44} are single-valued} \}.
\end{align*} 
The super-script in $\div\div^+$ indicates the smoothness is more than $\div\div$-conforming. Indeed we have $\mathbb V_{k}^{\div\div^+} (\bs r_1, \bs r_2; \mathbb M)\subset \bs H(\div,\Omega;\mathbb M)\cap \bs H(\div\div,\Omega;\mathbb M)$. 

\begin{theorem}\label{th:divdiv0}
Let $\bs r_1\geq 0$ and $\bs r_2\geq \max\{\bs r_1 - 2,-1\}$. Assume $k\geq \max\{2r_1^{\texttt{v}} + 2, 2r_2^{\texttt{v}}+3, 3r_2^e+6, 6[r_2^e=-1, r_2^{\texttt{v}}=1], 4[r_2^e=-1, r_2^{\texttt{v}}=0]\}$. The BGG diagram
 \begin{equation}\label{eq:femdivdivbgg}
\begin{tikzcd}[column sep=0.378cm]
\mathbb R^2 \arrow{r}{\subset}
&
\mathbb V_{k+1}^{\curl}(\bs r_1+1; \mathbb R^2)
\arrow{r}{\curl}
 &
\mathbb V_{k}^{\div\div^+} (\bs r_1, \bs r_2; \mathbb M)
   \arrow{r}{\div}
 &
\mathbb V^{\div}_{k-1}(\bs r_1 - 1, \bs r_2)
 \arrow{r}{}
 & \boldsymbol{0} \\
 \mathbb R \arrow[ur,swap,"{\rm -\bs x}"] \arrow{r}{\subset}
&
\mathbb V_{k}^{\curl}( \bs r_1)
 \arrow[ur,swap,"{\rm mskw}"] \arrow{r}{\curl}
 & 
\mathbb V_{k-1}^{\div}(\bs r_1 - 1, \bs r_2) 
 \arrow[ur,swap,"{\rm id}"] \arrow{r}{\div}
 & 
\mathbb V^{L^2}_{k-2}(\bs r_2) 
\arrow[r] 
 &0 
\end{tikzcd}
\end{equation}
which leads to the finite element divdiv complex
\begin{equation}\label{eq:continuousdivdivcomplex}
{\bf RT}\xrightarrow{\subset} 
\mathbb V_{k+1}^{\curl}(\bs r_1+1;\mathbb R^2)
\xrightarrow{{\sym\curl}}
\mathbb V_{k}^{\div\div^+} (\bs r_1, \bs r_2; \mathbb S)\xrightarrow{\div\div}  
\mathbb V^{L^2}_{k-2}(\bs r_2) 
\xrightarrow{}0,
\end{equation}
where $\mathbb V_{k}^{\div\div^+}(\bs r_1, \bs r_2; \mathbb S):=\mathbb V_{k}^{\div\div^+}(\bs r_1, \bs r_2; \mathbb M)/\mskw\mathbb V_{k}^{\curl}( \bs r_1)$.
\end{theorem}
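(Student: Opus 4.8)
The plan is to follow the template of the elasticity theorem: first record that both rows of~\eqref{eq:femdivdivbgg} are exact finite element de Rham complexes, then verify the algebraic compatibility of the diagram, and finally extract the exactness of~\eqref{eq:continuousdivdivcomplex} by a diagram chase rather than invoking the full BGG machinery. The bottom row is precisely Theorem~\ref{thm:nodalfemderhamcomplex} applied at degree $k-1$ with $(\bs r_1,\,\bs r_1-1,\,\bs r_2)$ playing the roles of $(\bs r_0,\bs r_1,\bs r_2)$, and the hypotheses translate into exactly the stated assumptions, so it is exact. The top row is the matrix-valued analogue ending in $\mathbb V^{\div}_{k-1}(\bs r_1-1,\bs r_2)$; its exactness is what the unisolvent DoFs~\eqref{eq:divdivdof0}--\eqref{eq:divdivdof6} were designed to guarantee, and it follows from the preceding unisolvence lemma together with the dimension identity of Lemma~\ref{lm:abstract}. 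The two linking maps are $\mskw\colon\mathbb V_k^{\curl}(\bs r_1)\to\mathbb V_k^{\div\div^+}(\bs r_1,\bs r_2;\mathbb M)$ and the identity, and the diagram anticommutes because $\div(\mskw\phi)=-\curl\phi$ while $\div\bs v=2\,\sskw(\curl\bs v)$ by~\eqref{eq:anticommutativeprop1}; since $\mskw$ is injective and the identity is bijective, the Arnold--Hu framework already yields~\eqref{eq:continuousdivdivcomplex} and its exactness, but I would give the direct argument below.

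First I would check that~\eqref{eq:continuousdivdivcomplex} is a complex with the stated kernel. Writing $\sym\curl\bs v=\curl\bs v-\mskw(\tfrac12\div\bs v)$, which is just $\skw\curl\bs v=\mskw\,\sskw\curl\bs v$ combined with~\eqref{eq:anticommutativeprop1}, the identities $\div\div\curl=0$ and $\div\div\circ\mskw=0$ give $\div\div\,\sym\curl=0$. For the kernel, $\sym\curl\bs v=0$ forces $\curl\bs v$ to be antisymmetric, which upon writing out the entries yields $\bs v=c\,\bs x+\bs a$ with $c\in\mathbb R$ and $\bs a\in\mathbb R^2$; thus $\ker(\sym\curl)={\bf RT}$, and the inclusion ${\bf RT}\subseteq\mathbb V^{\curl}_{k+1}(\bs r_1+1;\mathbb R^2)$ holds for $k\geq0$.

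The heart of the proof is the two exactness statements, obtained by chasing~\eqref{eq:femdivdivbgg}. For exactness at the middle, take $\bs\tau\in\mathbb V_k^{\div\div^+}(\bs r_1,\bs r_2;\mathbb S)$ with $\div\div\bs\tau=0$; then $\div\bs\tau\in\mathbb V^{\div}_{k-1}(\bs r_1-1,\bs r_2)\cap\ker(\div)$, so by exactness of the bottom row $\div\bs\tau=\curl\phi$ with $\phi\in\mathbb V_k^{\curl}(\bs r_1)$. The field $\bs\sigma:=\bs\tau+\mskw\phi\in\mathbb V_k^{\div\div^+}(\bs r_1,\bs r_2;\mathbb M)$ then satisfies $\div\bs\sigma=\curl\phi-\curl\phi=0$, so top-row exactness gives $\bs\sigma=\curl\bs v$; taking symmetric parts and using $\sym\mskw\phi=0$ yields $\bs\tau=\sym\curl\bs v$. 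For surjectivity of $\div\div$, given $q\in\mathbb V^{L^2}_{k-2}(\bs r_2)$ I lift it through the bottom row to $\bs u\in\mathbb V^{\div}_{k-1}(\bs r_1-1,\bs r_2)$ with $\div\bs u=q$, then through the top row to $\bs\tau\in\mathbb V_k^{\div\div^+}(\bs r_1,\bs r_2;\mathbb M)$ with $\div\bs\tau=\bs u$, so that $\div\div\bs\tau=q$; replacing $\bs\tau$ by $\sym\bs\tau=\bs\tau-\mskw(\sskw\bs\tau)$ keeps it in the space and does not change $\div\div$.

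I expect the main obstacle to be not this chase but the two ingredients it rests on: proving exactness of the top matrix-valued row and, closely related, verifying that the antisymmetric part of a field in $\mathbb V_k^{\div\div^+}(\bs r_1,\bs r_2;\mathbb M)$ genuinely lands in $\mskw\,\mathbb V_k^{\curl}(\bs r_1)$, i.e. that $\sskw\bs\tau\in\mathbb V_k^{\curl}(\bs r_1)$ so the finite element smoothness is compatible with the linking maps. Both reduce to the unisolvence lemma for~\eqref{eq:divdivdof0}--\eqref{eq:divdivdof6} and a careful dimension count, after which the remaining steps are routine.
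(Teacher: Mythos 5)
Your proposal is correct, but it proves the exactness of~\eqref{eq:continuousdivdivcomplex} by a different mechanism than the paper. The paper's proof establishes only two things: surjectivity of $\div\div$ onto $\mathbb V^{L^2}_{k-2}(\bs r_2)$, obtained by composing the surjective $\div$ maps of the two rows and noting $\div\div\circ\mskw=0$; and the dimension identity
$\dim \mathbb V_{k}^{\div\div^+} (\bs r_1, \bs r_2; \mathbb S)=\dim \mathbb V_{k+1}^{\curl}(\bs r_1+1; \mathbb R^2)+\dim\mathbb V^{L^2}_{k-2}(\bs r_2)-3$,
obtained by combining the Euler characteristics of the two rows of~\eqref{eq:femdivdivbgg} and subtracting $\dim\mskw\mathbb V_k^{\curl}(\bs r_1)$. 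Exactness at the middle slot then follows from Lemma~\ref{lm:abstract} with no diagram chase; crucially, the paper works with the quotient definition $\mathbb V_{k}^{\div\div^+}(\bs r_1, \bs r_2; \mathbb M)/\mskw\mathbb V_{k}^{\curl}(\bs r_1)$ throughout, which sidesteps the question of whether $\sskw\bs\tau$ lies in $\mathbb V_k^{\curl}(\bs r_1)$ (that identification is deferred to the subsequent lemma characterizing $\mathbb V_{k}^{\div\div^+}(\bs r_1,\bs r_2;\mathbb S)$ by DoFs). Your zig-zag chase --- lift $\div\bs\tau=\curl\phi$ through the bottom row, correct by $\mskw\phi$, lift through the top row, symmetrize --- is instead the argument the paper uses for the \emph{elasticity} complex (Steps 1 and 2 of that proof), transplanted to the divdiv diagram. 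It is constructive and exhibits the potential explicitly, but it pays for this by needing both the exactness of the top matrix-valued row at its middle slot and the compatibility $\sskw\bs\tau\in\mathbb V_k^{\curl}(\bs r_1)$, precisely the two points you flag at the end; the paper's dimension count needs only the alternating dimension sums of the two rows and so avoids the second issue entirely. Both routes rest on the same inputs (Theorem~\ref{thm:nodalfemderhamcomplex} for the rows, the unisolvence of~\eqref{eq:divdivdof0}--\eqref{eq:divdivdof6}, and the anticommutativity $\div\mskw=-\curl$), so your argument is a legitimate, if slightly more demanding, alternative.
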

\begin{proof}
By the anti-commutative property $\div(\mskw v)=-\curl v$, we can conclude complex
\eqref{eq:continuousdivdivcomplex} from the BGG framework in~\cite{Arnold;Hu:2020Complexes}. 

In the following we give a self-contained proof without invoking the BGG framework. 
Clearly~\eqref{eq:continuousdivdivcomplex} is a complex. As $\div \div \mskw\mathbb V_{k}^{\curl}( \bs r_1) = -\div \curl\mathbb V_{k}^{\curl}( \bs r_1) = 0$, we have 
$$
\div\div\mathbb V_{k}^{\div\div^+} (\bs r_1, \bs r_2; \mathbb S)=\div\div\mathbb V_{k}^{\div\div^+} (\bs r_1, \bs r_2; \mathbb M)=\mathbb V^{L^2}_{k-2}(\bs r_2).
$$
By two complexes in diagram~\eqref{eq:femdivdivbgg}, we have
\begin{align*}
\dim \mathbb V_{k}^{\div\div^+} (\bs r_1, \bs r_2; \mathbb M) &=\dim \mathbb V_{k+1}^{\curl}(\bs r_1+1; \mathbb R^2)+\dim \mathbb V^{\div}_{k-1}(\bs r_1 - 1, \bs r_2)-2,\\
\dim\mathbb V_{k-1}^{\div}(\bs r_1 - 1, \bs r_2) &=\dim\mathbb V_{k}^{\curl}( \bs r_1)+\dim\mathbb V^{L^2}_{k-2}(\bs r_2) -1.
\end{align*}
Combining the last two equations yields 
$$
\dim \mathbb V_{k}^{\div\div^+} (\bs r_1, \bs r_2; \mathbb M)=\dim \mathbb V_{k+1}^{\curl}(\bs r_1+1; \mathbb R^2)+\dim\mathbb V_{k}^{\curl}( \bs r_1)+\dim\mathbb V^{L^2}_{k-2}(\bs r_2)-3.
$$
Hence
$$
\dim \mathbb V_{k}^{\div\div^+} (\bs r_1, \bs r_2; \mathbb S)=\dim \mathbb V_{k+1}^{\curl}(\bs r_1+1; \mathbb R^2)+\dim\mathbb V^{L^2}_{k-2}(\bs r_2)-3.
$$
Therefore the exactness of complex~\eqref{eq:continuousdivdivcomplex} follows from Lemma~\ref{lm:abstract}.
\end{proof}

Next we give the finite element characterization of $\mathbb V_{k}^{\div\div^+} (\bs r_1, \bs r_2; \mathbb S)$. We choose $\mathbb P_k(T;\mathbb S)$ as the shape function space. 
By symmetrizing DoFs~\eqref{eq:divdivdof0}-\eqref{eq:divdivdof6}, we propose the following local DoFs:
\begin{subequations}\label{eq:divdiv+dof}
\begin{align}
\nabla^i\boldsymbol{\tau}(\texttt{v}), & \quad \texttt{v}\in \Delta_{0}(T), i=0,\ldots, r_1^{\texttt{v}}, \label{eq:symdivdiv+dof1}\\
\nabla^j\div \div\boldsymbol{\tau}(\texttt{v}),  & \quad \texttt{v}\in \Delta_{0}(T), j= \max\{r_1^{\texttt{v}}-1,0\},\ldots, r_2^{\texttt{v}}, \label{eq:symdivdiv+dof2}\\
\int_e \boldsymbol{\tau}\boldsymbol{n} \cdot \bs q\dd s, &\quad  \bs q\in \mathbb P^2_{k- 2(r_1^{\texttt{v}} +1)} (e), e\in \Delta_{1}(T),\label{eq:symdivdiv+dof3}\\
\int_e \partial_n^i(\bs t^{\intercal}\boldsymbol{\tau}\boldsymbol{t}) \ q \dd s, &\quad  q\in \mathbb P_{k - 2(r_1^{\texttt{v}} +1)+i}(e), e\in \Delta_{1}(T),  i=0,\ldots, r_1^{e}, \label{eq:symdivdiv+dof31}\\
\int_e \boldsymbol{n}^{\intercal}\div\boldsymbol{\tau} q\dd s, &\quad  q\in \mathbb P_{k-1- 2r_1^{\texttt{v}}} (e), e\in \Delta_{1}(T),\label{eq:symdivdiv+dof32}\\
\int_e \partial_n^i(\boldsymbol{t}^{\intercal}\div\boldsymbol{\tau}) q\dd s, &\quad  q\in \mathbb P_{k-1- 2r_1^{\texttt{v}}+i} (e), e\in \Delta_{1}(T),  i=0,\ldots, r_1^{e}-1,\label{eq:symdivdiv+dof33}\\
\int_e \partial_n^i(\div\div\boldsymbol{\tau})\ q \dd s, &\quad  q\in \mathbb P_{k- 2(r_2^{\texttt{v}} +2)+i} (e), e\in \Delta_{1}(T),  i=0,\ldots, r_2^{e}, \label{eq:symdivdiv+dof44}\\
\int_T \div\boldsymbol{\tau}\cdot \bs q\dx, &\quad \bs q\in \curl\mathbb B_{k}(\bs r_1) / \bs x^{\perp}, \label{eq:symdivdiv+dof5}\\
\int_T \div\div\boldsymbol{\tau} q\dx, &\quad q\in \mathbb B_{k-2}(\boldsymbol{r}_2)/\mathbb P_1(T), \label{eq:symdivdiv+dof55}\\
\int_T \boldsymbol{\tau}:\boldsymbol{q} \dx, &\quad \boldsymbol{q}\in \air\mathbb B_{k+2}(\bs r_1+2). \label{eq:symdivdiv+dof6}
\end{align}
\end{subequations}

% \LC{In the following uni-solvence result, $\bs r_1\geq -1$ is allowed.} The requirement $\bs r_1\geq 0$ is for the BGG diagram.
Using a similar proof as that in Lemma \ref{lm:divdivM}, we can prove the unisolvence. 
\begin{lemma}\label{lem:divdivunisolvence} 
Let $\bs r_1\geq 0$ and $\bs r_2\geq \max\{\bs r_1 - 2,-1\}$. Assume $k\geq \max\{2r_1^{\texttt{v}} + 2, 2r_2^{\texttt{v}}+3, 3r_2^e+6, 6[r_2^e=-1, r_2^{\texttt{v}}=1], 4[r_2^e=-1, r_2^{\texttt{v}}=0]\}$.
The DoFs~\eqref{eq:symdivdiv+dof1}-\eqref{eq:symdivdiv+dof6} are uni-solvent for $\mathbb P_{k}(T;\mathbb S)$.
\end{lemma}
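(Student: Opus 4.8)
The plan is to follow the same two-stage template used for the $\mathbb M$-valued element established immediately above (DoFs \eqref{eq:divdivdof0}--\eqref{eq:divdivdof6}) and for the symmetric elasticity element in Lemma~\ref{lem:PkSunisolvence}: first match the number of DoFs with $\dim\mathbb P_k(T;\mathbb S)=3\binom{k+2}{2}$, and then prove injectivity, i.e. that a symmetric $\bs\tau\in\mathbb P_k(T;\mathbb S)$ annihilating \eqref{eq:symdivdiv+dof1}--\eqref{eq:symdivdiv+dof6} must vanish. Since \eqref{eq:symdivdiv+dof1}--\eqref{eq:symdivdiv+dof6} are exactly the symmetrization of the $\mathbb M$-valued DoFs, I expect the counting to differ from the $\mathbb M$ case only through the passage $\mathbb M\to\mathbb S$ (one scalar fewer per vertex/edge/interior block) together with the single reduction $\curl\mathbb B_k(\bs r_1)\to\curl\mathbb B_k(\bs r_1)/\bs x^{\perp}$ in \eqref{eq:symdivdiv+dof5}. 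I would verify the total by reusing the polynomial de Rham identity \eqref{eq:dimensionpolyderham}, exactly as in the proof of Lemma~\ref{lem:PkSunisolvence}, so that the accumulated binomial sum collapses to $\dim\mathbb P_k(T;\mathbb S)=3\binom{k+2}{2}$.

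For injectivity I would set $\bs v=\div\bs\tau\in\mathbb P_{k-1}(T;\mathbb R^2)$ and argue in three steps, mirroring the $\mathbb M$-valued proof. First, integration by parts together with the vanishing of \eqref{eq:symdivdiv+dof3} and \eqref{eq:symdivdiv+dof32} gives $(\div\div\bs\tau,q)_T=0$ for all $q\in\mathbb P_1(T)$. Second, I claim $\bs v$ satisfies every DoF of $\mathbb V^{\div}_{k-1}(\bs r_1-1,\bs r_2)$: the vertex moments of $\bs v$ and of $\div\bs v=\div\div\bs\tau$ come from \eqref{eq:symdivdiv+dof1}--\eqref{eq:symdivdiv+dof2}, the edge moments from \eqref{eq:symdivdiv+dof32}--\eqref{eq:symdivdiv+dof44}, and the interior moment of $\div\bs v$ from \eqref{eq:symdivdiv+dof55}, while the one interior vector moment absent from \eqref{eq:symdivdiv+dof5}, namely $\int_T\div\bs\tau\cdot\bs x^{\perp}\dx$, is recovered for free from symmetry: by parts $\int_T\div\bs\tau\cdot\bs x^{\perp}\dx=-\int_T\bs\tau:\nabla\bs x^{\perp}\dx+\int_{\partial T}(\bs\tau\bs n)\cdot\bs x^{\perp}\dd s$, and the volume term vanishes because $\nabla\bs x^{\perp}$ is skew while $\bs\tau$ is symmetric, leaving a boundary term already controlled by \eqref{eq:symdivdiv+dof3}. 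Hence Lemma~\ref{lem:2dCrdivfemunisolvence} forces $\bs v=\div\bs\tau=\boldsymbol{0}$. Third, a symmetric divergence-free polynomial tensor is an Airy field (exactness of the polynomial elasticity complex), so $\bs\tau=\air\phi$ for some $\phi\in\mathbb P_{k+2}(T)$; the vertex DoFs \eqref{eq:symdivdiv+dof1} and the edge DoFs \eqref{eq:symdivdiv+dof3}, \eqref{eq:symdivdiv+dof31} then translate, via $\bs t^{\intercal}\air\phi\,\bs t=\partial_n^2\phi$ and $\bs n^{\intercal}\air\phi\,\bs n=\partial_t^2\phi$, into vanishing vertex derivatives up to order $r_1^{\texttt{v}}+2$ and vanishing normal-derivative moments up to order $r_1^e+2$ of $\phi$, exactly as in Lemma~\ref{lem:PkSunisolvence}, so Theorem~\ref{thm:Cr2dfemunisolvence} places $\phi\in\mathbb B_{k+2}(\bs r_1+2)$ modulo $\mathbb P_1$. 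Finally \eqref{eq:symdivdiv+dof6} applied with the bubble representative yields $\int_T\air\phi:\air\phi\dx=0$, whence $\air\phi=\bs\tau=\boldsymbol{0}$.

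The main obstacle is the second step, and specifically the reconciliation of the quotient $\curl\mathbb B_k(\bs r_1)/\bs x^{\perp}$ in \eqref{eq:symdivdiv+dof5} with the full interior vector-moment set of $\mathbb V^{\div}_{k-1}(\bs r_1-1,\bs r_2)$. One must pin down the precise meaning of this quotient (as in the $\bs{RM}$-quotient of \eqref{eq:divSdof6}, where the quotiented direction need not literally lie in the test space) and check that the single functional it discards is exactly the rotational direction $\bs x^{\perp}$ rendered redundant by symmetry, with no other interior functional lost and with the index shifts $\bs r_1\to\bs r_1-1$ matching \eqref{eq:symdivdiv+dof32}--\eqref{eq:symdivdiv+dof55}; the remaining translation of the tensor DoFs into scalar DoFs on the Airy potential $\phi$ is then routine once the elasticity reduction of Lemma~\ref{lem:PkSunisolvence} is invoked.
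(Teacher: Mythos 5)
Your proposal follows essentially the same route as the paper's proof: match the DoF count against the $\mathbb M$-valued element via the bubble-complex/de Rham dimension identity, then show $\div\bs\tau$ annihilates all DoFs of $\mathbb V^{\div}_{k-1}(\bs r_1-1,\bs r_2)$ — recovering the discarded moment $(\div\bs\tau,\bs x^{\perp})_T=0$ by integration by parts and the skew-symmetry of $\nabla\bs x^{\perp}$ against the symmetric $\bs\tau$ — so that Lemma~\ref{lem:2dCrdivfemunisolvence} gives $\div\bs\tau=\bs 0$, and finally conclude $\bs\tau=\bs 0$ from the symmetric elasticity element (Lemma~\ref{lem:PkSunisolvence}), which your Airy-potential argument merely unpacks. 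The quotient subtlety you flag is handled in the paper exactly as you describe, so there is no gap.
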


\begin{lemma}
Let $\bs r_1\geq 0$ and $\bs r_2\geq \max\{\bs r_1 - 2,-1\}$. Assume $k\geq \max\{2r_1^{\texttt{v}} + 2, 2r_2^{\texttt{v}}+3, 3r_2^e+6, 6[r_2^e=-1, r_2^{\texttt{v}}=1], 4[r_2^e=-1, r_2^{\texttt{v}}=0]\}$. It holds that 
\begin{align}
\mathbb V_{k}^{\div\div^+}(\bs r_1, \bs r_2; \mathbb S) = \mathbb V:= \{\boldsymbol{\tau}\in \boldsymbol{L}^2(\Omega;\mathbb S):&\, \boldsymbol{\tau}|_T\in\mathbb P_{k}(T;\mathbb S)\;\forall~T\in\mathcal T_h, \notag\\
&\textrm{ all the DoFs~\eqref{eq:symdivdiv+dof1}-\eqref{eq:symdivdiv+dof44} are single-valued} \} 
\notag%\label{eq:Vkdivdivr1r2S}
\end{align}
and $\mathbb V_{k}^{\div\div^+}(\bs r_1, \bs r_2; \mathbb S)\subset \bs H(\div,\Omega;\mathbb S)\cap \bs H(\div\div,\Omega;\mathbb S)$.
\end{lemma}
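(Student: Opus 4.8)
The plan is to follow the same two-step template used for the symmetric elasticity space in the preceding lemma: first establish the easy inclusion $\mathbb V\subseteq\mathbb V_{k}^{\div\div^+}(\bs r_1,\bs r_2;\mathbb S)$, and then upgrade it to equality by a dimension count, combining the local uni-solvence of Lemma~\ref{lem:divdivunisolvence} with Euler's formula and the dimension identity for $\mathbb V_{k}^{\div\div^+}(\bs r_1,\bs r_2;\mathbb S)$ already extracted from the exactness of complex~\eqref{eq:continuousdivdivcomplex} in Theorem~\ref{th:divdiv0}. The inclusion is immediate: a field $\bs\tau$ whose symmetric DoFs~\eqref{eq:symdivdiv+dof1}-\eqref{eq:symdivdiv+dof44} are single-valued has, in particular, single-valued traces, so it carries exactly the inter-element smoothness encoded in the definition of $\mathbb V_{k}^{\div\div^+}(\bs r_1,\bs r_2;\mathbb M)$, and being symmetric it represents a class in the symmetric quotient.

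For the reverse inclusion I would show $\dim\mathbb V=\dim\mathbb V_{k}^{\div\div^+}(\bs r_1,\bs r_2;\mathbb S)$. The target is already available: from the proof of Theorem~\ref{th:divdiv0} the exactness of~\eqref{eq:continuousdivdivcomplex} gives
\begin{equation*}
\dim\mathbb V_{k}^{\div\div^+}(\bs r_1,\bs r_2;\mathbb S)=\dim\mathbb V_{k+1}^{\curl}(\bs r_1+1;\mathbb R^2)+\dim\mathbb V^{L^2}_{k-2}(\bs r_2)-3,
\end{equation*}
the $-3$ being $\dim{\bf RT}$. I would then compute $\dim\mathbb V$ by summing the single-valued symmetric DoFs~\eqref{eq:symdivdiv+dof1}-\eqref{eq:symdivdiv+dof44} over $\Delta_0(\mathcal T_h)$, $\Delta_1(\mathcal T_h)$ and $\Delta_2(\mathcal T_h)$, the per-element count being fixed by Lemma~\ref{lem:divdivunisolvence}. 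Mirroring the elasticity computation, I expect to reorganize the difference $\dim\mathbb V-\dim\mathbb V^{L^2}_{k-2}(\bs r_2)$ into $\dim\mathbb V_{k+1}^{\curl}(\bs r_1+1;\mathbb R^2)$ up to a term proportional to $|\Delta_0(\mathcal T_h)|-|\Delta_1(\mathcal T_h)|+|\Delta_2(\mathcal T_h)|$, which Euler's formula collapses to the constant $-3$. The delicate bookkeeping of the interior moments~\eqref{eq:symdivdiv+dof5}-\eqref{eq:symdivdiv+dof6}, with their quotients by $\bs x^{\perp}$ and $\mathbb P_1(T)$ and the bubble identity $\dim\mathbb B_{k+1}(\bs r_1+1;\mathbb R^2)=\dim\mathbb B_{k+2}(\bs r_1+2)+\dim\mathbb B_{k}(\bs r_1)-1$ from complex~\eqref{eq:femderhamcomplex}, is the main obstacle.

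Finally I would verify the conformity $\mathbb V_{k}^{\div\div^+}(\bs r_1,\bs r_2;\mathbb S)\subset\bs H(\div,\Omega;\mathbb S)\cap\bs H(\div\div,\Omega;\mathbb S)$. Single-valuedness of the vertex DoFs~\eqref{eq:symdivdiv+dof1} and of the normal moments~\eqref{eq:symdivdiv+dof3} makes $\bs\tau\bs n$ continuous across every interior edge, which is $\bs H(\div;\mathbb S)$-conformity and yields $\div\bs\tau\in\boldsymbol{L}^2(\Omega;\mathbb R^2)$. For $\bs H(\div\div;\mathbb S)$ it then suffices that $\div\bs\tau$ be $\bs H(\div)$-conforming, i.e. that $\bs n^{\intercal}\div\bs\tau$ be single-valued across edges; this follows from the edge moments~\eqref{eq:symdivdiv+dof32} together with the vertex DoFs~\eqref{eq:symdivdiv+dof1}-\eqref{eq:symdivdiv+dof2} pinning the endpoint values of $\div\bs\tau$, so that $\div\div\bs\tau\in L^2(\Omega)$ and the claimed containment holds.
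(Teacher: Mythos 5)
Your proposal is correct and follows essentially the same route as the paper: the easy inclusion $\mathbb V\subseteq\mathbb V_{k}^{\div\div^+}(\bs r_1,\bs r_2;\mathbb S)$, then equality by computing $\dim\mathbb V-\dim\mathbb V^{L^2}_{k-2}(\bs r_2)$ from the single-valued DoFs, collapsing the residual $|\Delta_0(\mathcal T_h)|-|\Delta_1(\mathcal T_h)|+|\Delta_2(\mathcal T_h)|$ term via Euler's formula to match $\dim\mathbb V_{k+1}^{\curl}(\bs r_1+1;\mathbb R^2)-3$, and comparing with the dimension identity supplied by the exactness of complex~\eqref{eq:continuousdivdivcomplex}. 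Your explicit trace argument for the $\bs H(\div)\cap\bs H(\div\div)$ conformity is a reasonable supplement to what the paper leaves implicit.
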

\begin{proof}
%Denote by $\mathbb V$ the space in the right hand side of~\eqref{eq:Vkdivdivr1r2S} for simplicity.
Apparently $\mathbb V\subseteq\mathbb V_{k}^{\div\div^+} (\bs r_1, \bs r_2; \mathbb S)$. It suffices to prove $\dim\mathbb V_{k}^{\div\div^+} (\bs r_1, \bs r_2; \mathbb S)=\dim\mathbb V$ which can be verified by a direct computation and the Euler's formula.
%By comparing DoFs and direct computation,
%\begin{align*}
%&\quad\dim\mathbb V -\dim\mathbb V^{L^2}_{k-2}(\bs r_2) \\
%&=
%3{r_1^{\texttt{v}}+2\choose 2}|\Delta_0(\mathcal T_h)|-{r_1^{\texttt{v}}\choose 2}|\Delta_0(\mathcal T_h)| +(2k-2- 4r_1^{\texttt{v}}-r_1^e)|\Delta_1(\mathcal T_h)| \\
%&\quad +|\Delta_1(\mathcal T_h)|\sum_{i=0}^{r_1^e}(2k-1- 4r_1^{\texttt{v}}+2i) \\
%&\quad +|\Delta_2(\mathcal T_h)|\big(\dim\mathbb B_{k}(\bs r_1)+\dim\mathbb B_{k+2}(\bs r_1+2)-4\big) \\
%&=2|\Delta_0(\mathcal T_h)| {r_1^{\texttt{v}}+3 \choose 2} + 2|\Delta_{1}(\mathcal T_h)|(r_1^e+2) \left(k-2r_1^{\texttt{v}}-3/2+r_1^e/2\right) \\
%&\quad+2|\Delta_2(\mathcal T_h)|\dim\mathbb B_{k+1}(\bs r_1+1)-3\big(|\Delta_0(\mathcal T_h)|-|\Delta_1(\mathcal T_h)|+|\Delta_2(\mathcal T_h)|\big) \\
%&=\dim\mathbb V_{k+1}^{\curl}(\bs r_1+1;\mathbb R^2)-3\big(|\Delta_0(\mathcal T_h)|-|\Delta_1(\mathcal T_h)|+|\Delta_2(\mathcal T_h)|\big),
%\end{align*}
%which together with the Euler's formula yields
%\begin{align*}  
%\dim\mathbb V &= \dim\mathbb V^{L^2}_{k-2}(\bs r_2)+\dim\mathbb V_{k+1}^{\curl}(\bs r_1+1;\mathbb R^2)-3 \\
%&=\dim\mathbb V^{L^2}_{k-2}(\bs r_2)+\dim\sym\curl\mathbb V_{k+1}^{\curl}(\bs r_1+1;\mathbb R^2).
%\end{align*}
%On the other side, by complex~\eqref{eq:continuousdivdivcomplex}, 
%$$
%\dim\mathbb V_{k}^{\div\div^+}(\bs r_1, \bs r_2; \mathbb S)=\dim\mathbb V^{L^2}_{k-2}(\bs r_2)+\dim\sym\curl\mathbb V_{k+1}^{\curl}(\bs r_1+1;\mathbb R^2).
%$$
%Therefore $\dim\mathbb V_{k}^{\div\div^+} (\bs r_1, \bs r_2; \mathbb S)=\dim\mathbb V$.
\end{proof}

%\begin{remark}\rm
%We assume $\bs r_1\geq 0$ and thus $\bs \tau|_e$ is continuous. Then through linear combination DoF~\eqref{eq:symdivdiv+dof32} can be replaced by 
%$$
%\int_{e} \tr_2^{\div\div}(\bs \tau)q\dd s,  \quad q\in \mathbb P_{k-1- 2r_1^{\texttt{v}}} (e),
%$$
%where $\tr_2^{\div\div}(\bs \tau) = \partial_{t}(\bs t^{\intercal}\bs \tau \bs n) + \boldsymbol{n}^{\intercal}\div\boldsymbol{\tau}$ is one of the trace operators of $\div\div$; see~\cite{ChenHuang2020}. 
%\end{remark}

\begin{example}\rm 
We choose $\bs r_1 = (1, 0), \bs r_2 = (0,0)$ to get the divdiv complex constructed in~\cite{Chen;Huang:2021Finite} for $k\geq6$
\begin{equation*}
\resizebox{1.0\hsize}{!}{$
{\bf RT}\xrightarrow{\subset} 
{\rm Argy}_{k+1}(
\begin{pmatrix}
 2\\
 1
\end{pmatrix}
;\mathbb R^2)
\xrightarrow{{\sym\curl}}
\mathbb V_{k}^{\div\div^+} (
\begin{pmatrix}
 1\\
 0
\end{pmatrix}
, 
\begin{pmatrix}
 0\\
 0
\end{pmatrix}
; \mathbb S)\xrightarrow{\div\div}  
{\rm Lagrange}_{k-2}(
\begin{pmatrix}
 0\\
 0
\end{pmatrix}
) 
\xrightarrow{}0.
$}
\end{equation*}

\end{example}

The finite element divdiv complexes presented in~\cite{Hu;Ma;Zhang:2020family,ChenHuang2020} with $\bs r_1=(0,-1),\bs r_2=(-1,-1)$ are not included in complex \eqref{eq:continuousdivdivcomplex} due to the mis-match of the smoothness. In~\eqref{eq:femdivdivbgg}, $\sskw(\bs \tau)$ is discontinuous for $\bs \tau \in \mathbb V_{k}^{\div} (\begin{pmatrix}
 r_1^{\texttt{v}}\\
 -1
\end{pmatrix},
%\begin{pmatrix}
% -1\\
% -1
%\end{pmatrix},
\bs r_2; \mathbb M).
$ The operator $\mskw$ is still injective. But it is unclear if $\mathbb V_{k}^{\div\div^+}(\begin{pmatrix}
 r_1^{\texttt{v}}\\
 -1
\end{pmatrix}, \bs r_2; \mathbb M)/\mskw\mathbb V_{k}^{\curl}(\begin{pmatrix}
 r_1^{\texttt{v}}\\
 0
\end{pmatrix} )$ consists of symmetric matrix functions with desirable normal continuity. 
The continuous version of the divdiv complex is~\cite{Hu;Ma;Zhang:2020family}
\begin{equation}\label{eq:divdivcomplexH1div}
%\resizebox{0.91\hsize}{!}{$
{\bf RT}\xrightarrow{\subset} \boldsymbol{H}^{1,1}\left(\operatorname{div}, \Omega\right) \stackrel{\sym\curl}{\longrightarrow} \boldsymbol{H}(\operatorname{divdiv}, \Omega ; \mathbb{S}) \cap \boldsymbol{H}(\operatorname{div}, \Omega ; \mathbb{S}) \stackrel{\operatorname{divdiv}}{\longrightarrow} L^{2}(\Omega) \rightarrow 0.
%$}
\end{equation}

Now we consider the finite element discretization of the divdiv complex \eqref{eq:divdivcomplexH1div} by using the BGG framework.
For the case $\bs r_1 = (r_1^{\texttt{v}}, -1)$ with $r_1^{\texttt{v}}\geq 0$, $\bs r_0=\bs r_1+1$ and $\bs r_2\geq \max\{\bs r_1 - 2, -1 \}$, we refine the BGG diagram \eqref{eq:femdivdivbgg} to
\begin{equation}\label{eq:femdivdivbggexample1}
\begin{tikzcd}[column sep=0.378cm]
\mathbb R^2 \arrow{r}{\subset}
&
\mathbb V_{k+1}^{\div}(\bs r_0, 
\begin{pmatrix}
r_1^{\texttt{v}}\\
0 
\end{pmatrix})
\arrow{r}{\curl}
 &
\hat{\mathbb V}_{k}^{\div\div^+} (\bs r_1,
%\begin{pmatrix}
% -1\\
% -1
%\end{pmatrix},
\bs r_2; \mathbb M)
   \arrow{r}{\div}
 &
\mathbb V^{\div}_{k-1}(\begin{pmatrix}
r_1^{\texttt{v}} -1\\
 -1
\end{pmatrix},\bs r_2)
 \arrow{r}{}
 & 0 
\\
 \mathbb R \arrow[ur,swap,"{\rm -\bs x}"] \arrow{r}{\subset}
&
\mathbb V_{k}^{\curl}(\begin{pmatrix}
 r_1^{\texttt{v}}\\
 0
\end{pmatrix})
 \arrow[ur,swap,"{\rm mskw}"] \arrow{r}{\curl}
 & 
\mathbb V_{k-1}^{\div}(\begin{pmatrix}
r_1^{\texttt{v}} -1\\
 -1
\end{pmatrix},\bs r_2) 
 \arrow[ur,swap,"{\rm id}"] \arrow{r}{\div}
 & 
\mathbb V^{L^2}_{k-2}(\bs r_2) 
\arrow[r] 
 &0 
\end{tikzcd}.
\end{equation}
Here the space $\hat{\mathbb V}_{k}^{\div\div^+} (\bs r_1,\bs r_2; \mathbb M)$ is the subspace of $\mathbb V_{k}^{\div\div^+} (\bs r_1, \bs r_2; \mathbb M)$ defined by
$$
\hat{\mathbb V}_{k}^{\div\div^+} (\bs r_1,\bs r_2; \mathbb M):=\big\{\bs\tau\in\mathbb V_{k}^{\div\div^+} (\bs r_1, \bs r_2; \mathbb M): \sskw\bs\tau\in\mathbb V_{k}^{\curl}(\begin{pmatrix}
 r_1^{\texttt{v}}\\
 0
\end{pmatrix})\big\}.
$$
The diagram \eqref{eq:femdivdivbggexample1} will lead to the finite element divdiv complex \eqref{eq:femdivdivcomplex} with the finite element space $\mathbb V_{k}^{\div\div^+} (\bs r_1, \bs r_2; \mathbb S)=\hat{\mathbb V}_{k}^{\div\div^+} (\bs r_1,\bs r_2; \mathbb M)/\mskw\mathbb V_{k}^{\curl}(\begin{pmatrix}
 r_1^{\texttt{v}}\\
 0
\end{pmatrix})$.

Next we prove the exactness of the derived finite element divdiv complex directly rather than using the BGG framework. The space $\mathbb V_{k}^{\div\div^+} (\bs r_1, \bs r_2; \mathbb S)$ is still defined by DoFs \eqref{eq:divdiv+dof} and recall that $i=0,\ldots, -1$ means empty and thus \eqref{eq:symdivdiv+dof31} and \eqref{eq:symdivdiv+dof33} are not present. 

\begin{theorem}
Let $\bs r_1 = (r_1^{\texttt{v}}, -1), r_1^{\texttt{v}}\geq 0$ and $\bs r_2\geq \max\{\bs r_1 - 2, -1 \}$. 
Assume $k\geq \max\{2r_1^{\texttt{v}} + 2, 2r_2^{\texttt{v}}+3, 3r_2^e+6, 6[r_2^e=-1, r_2^{\texttt{v}}=1], 4[r_2^e=-1, r_2^{\texttt{v}}=0]\}$.
The following finite element divdiv complex is exact
\begin{equation}\label{eq:femdivdivcomplex}
\resizebox{0.935\hsize}{!}{$
{\bf RT}\xrightarrow{\subset} 
\mathbb V_{k+1}^{\div}(\begin{pmatrix}
r_1^{\texttt{v}} +1\\
 0
\end{pmatrix}, 
\begin{pmatrix}
r_1^{\texttt{v}}\\
0 
\end{pmatrix})
\xrightarrow{{\sym\curl}}\mathbb V_{k}^{\div\div^+}(\begin{pmatrix}
 r_1^{\texttt{v}}\\
 -1
\end{pmatrix}, \bs r_2; \mathbb S)\xrightarrow{\div\div}  \mathbb V^{L^2}_{k-2}(\bs r_2)\xrightarrow{}0.
$}
\end{equation}
\end{theorem}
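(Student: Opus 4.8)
The plan is to verify the hypotheses of Lemma~\ref{lm:abstract} for the four–term sequence~\eqref{eq:femdivdivcomplex}, writing $\mathcal V_0=\mathbf{RT}$, $\mathcal V_1=\mathbb V_{k+1}^{\div}((r_1^{\texttt{v}}+1,0),(r_1^{\texttt{v}},0))$, $\mathcal V_2=\mathbb V_{k}^{\div\div^+}((r_1^{\texttt{v}},-1),\bs r_2;\mathbb S)$ and $\mathcal V_3=\mathbb V^{L^2}_{k-2}(\bs r_2)$, with $\dd_1=\sym\curl$ and $\dd_2=\div\div$. Three ingredients are then needed: that the sequence is a complex with $\mathbf{RT}=\mathcal V_1\cap\ker(\sym\curl)$; the alternating dimension identity~\eqref{eq:abstractdimenidentity}; and one of the two exactness identities, for which I will establish the middle one $\sym\curl\,\mathcal V_1=\mathcal V_2\cap\ker(\div\div)$.

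First I would dispose of the complex and kernel conditions. The pointwise identity $\div\div\,\sym\curl\equiv0$ gives $\dd_2\dd_1=0$, while the inclusions $\sym\curl\,\mathcal V_1\subseteq\mathcal V_2$ and $\div\div\,\mathcal V_2\subseteq\mathcal V_3$ follow from the DoF descriptions, tracking that $\bs v\mapsto\sym\curl\bs v$ lowers both the polynomial degree and the prescribed smoothness by one (from $(r_1^{\texttt{v}}+1,0)$ to $(r_1^{\texttt{v}},-1)$). Since $\sym\curl\bs v=0$ exactly when $\bs v\in\mathbf{RT}$ and $\mathbf{RT}\subseteq\mathcal V_1$ by the conformity of the first space, the kernel condition $\mathbf{RT}=\mathcal V_1\cap\ker(\sym\curl)$ holds.

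For the dimension identity I would globalize the unisolvent local DoFs of Lemma~\ref{lem:divdivr1em1unisolvence}, assembling $\dim\mathcal V_2$ as a sum of vertex, edge and interior contributions over $\Delta_0(\mathcal T_h)$, $\Delta_1(\mathcal T_h)$, $\Delta_2(\mathcal T_h)$. Exactly as in the dimension lemma preceding Theorem~\ref{th:divdiv0}, the difference $\dim\mathcal V_2-\dim\mathcal V_3$ can be reorganized, using the polynomial de Rham count~\eqref{eq:dimensionpolyderham}, into $\dim\mathcal V_1$ minus a multiple of $|\Delta_0(\mathcal T_h)|-|\Delta_1(\mathcal T_h)|+|\Delta_2(\mathcal T_h)|$; Euler's formula then yields $\dim\mathcal V_2-\dim\mathcal V_3=\dim\mathcal V_1-\dim\mathbf{RT}$ with $\dim\mathbf{RT}=3$, which is precisely~\eqref{eq:abstractdimenidentity}. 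This step is pure bookkeeping, so I would only exhibit the reorganization and the final cancellation.

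The main obstacle is the middle exactness, which is exactly what the BGG framework cannot supply here: for $r_1^e=-1$ the form $\sskw\bs\tau$ is only normal–continuous, so one cannot symmetrize inside the conforming spaces by subtracting an $\mskw$ of a scalar element. Instead I would argue analytically. Given $\bs\tau\in\mathcal V_2$ with $\div\div\bs\tau=0$, the inclusion $\mathcal V_2\subset\boldsymbol H(\div\div,\Omega;\mathbb S)\cap\boldsymbol H(\div,\Omega;\mathbb S)$ and the exactness of the Sobolev complex~\eqref{eq:divdivcomplexH1div} produce $\bs v\in\boldsymbol H^1(\div,\Omega)$, unique up to $\mathbf{RT}$, with $\bs\tau=\sym\curl\bs v$ and $\bs v|_T\in\mathbb P_{k+1}(T;\mathbb R^2)$. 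The heart of the matter is then to show $\bs v\in\mathcal V_1$, i.e. that the discrete smoothness of $\bs\tau$ lifts to $\bs v$: the $C^{r_1^{\texttt{v}}}$ vertex data of $\bs\tau=\sym\curl\bs v$ force $\bs v\in C^{r_1^{\texttt{v}}+1}$ at each vertex, while the relation $\curl\div\bs v=2\div\bs\tau$ transfers the continuity and vertex smoothness of $\div\bs\tau$ to the smoothness $(r_1^{\texttt{v}},0)$ demanded of $\div\bs v$, matching the defining DoFs of $\mathcal V_1$; the residual $\mathbf{RT}$ ambiguity is harmless since $\mathbf{RT}\subseteq\mathcal V_1$. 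Together with the inclusion $\sym\curl\,\mathcal V_1\subseteq\mathcal V_2\cap\ker(\div\div)$ noted above, this closes the middle exactness, and Lemma~\ref{lm:abstract} then forces the exactness of~\eqref{eq:femdivdivcomplex}.
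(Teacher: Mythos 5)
Your overall skeleton agrees with the paper's: both proofs reduce the exactness of~\eqref{eq:femdivdivcomplex} to Lemma~\ref{lm:abstract}, and your dimension bookkeeping (reorganizing $\dim\mathcal V_2-\dim\mathcal V_3$ into $\dim\mathcal V_1-3$ via~\eqref{eq:dimensionpolyderham} and Euler's formula) is exactly the computation the paper carries out. Where you diverge is in which of the two sufficient exactness conditions you verify directly. The paper verifies $\div\div\,\mathcal V_2=\mathcal V_3$, and this is essentially free: since $\mathbb V_{k}^{\div\div^+}((r_1^{\texttt{v}},0),\bs r_2;\mathbb S)\subseteq\mathbb V_{k}^{\div\div^+}((r_1^{\texttt{v}},-1),\bs r_2;\mathbb S)$, the surjectivity of $\div\div$ already established in Theorem~\ref{th:divdiv0} for the smaller (smoother) space passes to the larger one by a one-line sandwich. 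You instead attack the middle exactness $\sym\curl\,\mathcal V_1=\mathcal V_2\cap\ker(\div\div)$, which is the hard direction here.

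That choice creates a genuine gap in your argument. Producing $\bs v\in\boldsymbol H^{1}(\div,\Omega)$ with $\bs\tau=\sym\curl\bs v$ from~\eqref{eq:divdivcomplexH1div} is fine (and piecewise polynomiality of $\bs v$ follows from the local polynomial divdiv complex), but the assertion that ``the $C^{r_1^{\texttt{v}}}$ vertex data of $\bs\tau$ force $\bs v\in C^{r_1^{\texttt{v}}+1}$ at each vertex'' is not correct as stated: $\sym\curl\bs v$ only controls the symmetric part of $\curl\bs v$, and $\curl\bs v=\bs\tau+\mskw(\tfrac12\div\bs v)$, so vertex smoothness of $\nabla\bs v$ requires \emph{simultaneously} establishing vertex smoothness of $\div\bs v$. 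You do invoke $\curl(\div\bs v)=2\div\bs\tau$ for that purpose, but $\div\bs\tau$ is only partially continuous in $\mathcal V_2$ (only $\boldsymbol n^{\intercal}\div\bs\tau$ is single-valued on edges, via~\eqref{eq:symdivdivre-1+dof32}), so the transfer of smoothness to $\div\bs v$, and then back to $\nabla^{i}\bs v(\texttt{v})$ up to order $r_1^{\texttt{v}}+1$ and to the edge moments defining $\mathcal V_1$, is a nontrivial chain of trace identities that your sketch asserts rather than proves. None of this work is needed if you follow the paper and check surjectivity of the last map instead; I recommend switching to that route, or else writing out the lifting argument in full.
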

\begin{proof}
It is easy to check that~\eqref{eq:femdivdivcomplex} is a complex. We will prove the exactness of complex~\eqref{eq:femdivdivcomplex}.

By divdiv complex~\eqref{eq:continuousdivdivcomplex}, we have $\div\div\mathbb V_{k}^{\div\div^+}(\begin{pmatrix}
 r_1^{\texttt{v}}\\
 0
\end{pmatrix}, \bs r_2; \mathbb S)=\mathbb V^{L^2}_{k-2}(\bs r_2)$. Noting that
$$
\div\div\mathbb V_{k}^{\div\div^+}(\begin{pmatrix}
 r_1^{\texttt{v}}\\
 0
\end{pmatrix}, \bs r_2; \mathbb S)\subseteq
\div\div\mathbb V_{k}^{\div\div^+}(\begin{pmatrix}
 r_1^{\texttt{v}}\\
 -1
\end{pmatrix}, \bs r_2; \mathbb S)\subseteq
\mathbb V^{L^2}_{k-2}(\bs r_2),
$$
hence $\div\div\mathbb V_{k}^{\div\div^+}(\begin{pmatrix}
 r_1^{\texttt{v}}\\
 -1
\end{pmatrix}, \bs r_2; \mathbb S)=\mathbb V^{L^2}_{k-2}(\bs r_2)$. On the other side,
\begin{align*}
% &\quad\dim \mathbb V_{k}^{\div\div^+}(\begin{pmatrix}
%  r_1^{\texttt{v}}\\
%  -1
% \end{pmatrix}, \bs r_2; \mathbb S)\cap\ker(\div\div)\\
&\quad\dim \mathbb V_{k}^{\div\div^+}(\begin{pmatrix}
 r_1^{\texttt{v}}\\
 -1
\end{pmatrix}, \bs r_2; \mathbb S)-\dim\mathbb V^{L^2}_{k-2}(\bs r_2) \\
&=3{r_1^{\texttt{v}}+2\choose2}|\Delta_0(\mathcal T_h)|-{r_1^{\texttt{v}}\choose2}|\Delta_0(\mathcal T_h)| +(3k-6r_1^{\texttt{v}}-2)|\Delta_1(\mathcal T_h)| \\
&\quad + |\Delta_2(\mathcal T_h)|\dim\mathbb B_k(\begin{pmatrix}
 r_1^{\texttt{v}}\\
 0
\end{pmatrix})+|\Delta_2(\mathcal T_h)|\dim\mathbb B_{k+2}(\begin{pmatrix}
 r_1^{\texttt{v}}+2\\
 1
\end{pmatrix})-4|\Delta_2(\mathcal T_h)| \\
&=2{r_1^{\texttt{v}}+3\choose2}|\Delta_0(\mathcal T_h)| +(3k-6r_1^{\texttt{v}}-5)|\Delta_1(\mathcal T_h)| \\
&\quad + |\Delta_2(\mathcal T_h)|\dim\mathbb B_k(\begin{pmatrix}
 r_1^{\texttt{v}}\\
 0
\end{pmatrix})+|\Delta_2(\mathcal T_h)|\dim\mathbb B_{k+2}(\begin{pmatrix}
 r_1^{\texttt{v}}+2\\
 1
\end{pmatrix})-|\Delta_2(\mathcal T_h)| \\
&\quad-3(|\Delta_0(\mathcal T_h)|-|\Delta_1(\mathcal T_h)|+|\Delta_2(\mathcal T_h)|).
\end{align*}
Thanks to the DoFs~\eqref{eq:2dCrdivfemdof0}-\eqref{eq:2dCrdivfemdof6} for $\mathbb V_{k+1}^{\div}(\begin{pmatrix}
r_1^{\texttt{v}} +1\\
 0
\end{pmatrix}, 
\begin{pmatrix}
r_1^{\texttt{v}}\\
0 
\end{pmatrix})$ and the Euler's formula,
$$
\dim \mathbb V_{k}^{\div\div^+}(\begin{pmatrix}
 r_1^{\texttt{v}}\\
 -1
\end{pmatrix}, \bs r_2; \mathbb S)-\dim\mathbb V^{L^2}_{k-2}(\bs r_2)=\dim\mathbb V_{k+1}^{\div}(\begin{pmatrix}
r_1^{\texttt{v}} +1\\
 0
\end{pmatrix}, 
\begin{pmatrix}
r_1^{\texttt{v}}\\
0 
\end{pmatrix})-3,
$$
which together with Lemma~\ref{lm:abstract} indicates the exactness of complex~\eqref{eq:femdivdivcomplex}.
\end{proof}

\begin{example}\rm
When $r_1^{\texttt{v}} = 0$ and $\bs r_2=-1$, we recover the finite element divdiv complex constructed in~\cite{Hu;Ma;Zhang:2020family} for $k\geq3$
\begin{equation*}
{\bf RT}\xrightarrow{\subset} 
\mathbb V_{k+1}^{\div}(\begin{pmatrix}
1\\
 0
\end{pmatrix}, 
\begin{pmatrix}
0\\
0 
\end{pmatrix})
\xrightarrow{{\sym\curl}}
{\rm HMZ}_k(
\begin{pmatrix}
 0\\
 -1
\end{pmatrix}
) 
\xrightarrow{\div\div}  
{\rm DG}_{k-2}(
\begin{pmatrix}
 -1\\
 -1
\end{pmatrix}
) 
\xrightarrow{}0.
\end{equation*}

\end{example}

%where $\boldsymbol{H}^{1}\left(\operatorname{div}, \Omega\right):=\{\boldsymbol{v}\in\boldsymbol{H}^{1}\left(\Omega ; \mathbb{R}^{2}\right): \div\boldsymbol{v}\in H^1(\Omega)\}$.

Another modification is to relax the smoothness $\bs H(\div\div,\Omega;\mathbb S)\cap \bs H(\div, \Omega; \mathbb S)$ to $\bs H(\div\div,\Omega;\mathbb S)$ only. We will modify \eqref{eq:divdiv+dof} by replacing \eqref{eq:symdivdiv+dof3}-\eqref{eq:symdivdiv+dof33} with
\begin{subequations}\label{eq:divdivdof}
\begin{align}
\int_e \bs n^{\intercal}\boldsymbol{\tau}\boldsymbol{n} \ q\dd s, &\quad  q\in \mathbb P_{k- 2(r_1^{\texttt{v}} +1)} (e), e\in \Delta_{1}(T),\label{eq:symdivdivdof3}\\
\int_e \bs t^{\intercal}\boldsymbol{\tau}\boldsymbol{n} \ q\dd s, &\quad  q\in \mathbb P_{k- 2(r_1^{\texttt{v}} +1)} (e), e\in \Delta_{1}(T),\label{eq:symdivdivdof3t}\\
\int_e \tr_{2}^{\div\div}(\boldsymbol{\tau}) \ q\dd s, &\quad  q\in \mathbb P_{k-1- 2r_1^{\texttt{v}}} (e), e\in \Delta_{1}(T),\label{eq:symdivdivdof32}
%\\
%\int_e \partial_n^i(\div\div\boldsymbol{\tau})\ q \dd s, &\quad  q\in \mathbb P_{k- 2(r_2^{\texttt{v}} +2)+i} (e), e\in \Delta_{1}(T),  i=0,\ldots, r_2^{e}, \label{eq:symdivdivdof44}
\end{align}
\end{subequations}
where $\tr_2^{\div\div}(\bs \tau) = \partial_{t}(\bs t^{\intercal}\bs \tau \bs n) + \boldsymbol{n}^{\intercal}\div\boldsymbol{\tau}$ is one of the trace operators of $\div\div$; see~\cite{ChenHuang2020}. 

Define
\begin{align}
&\mathbb V_{k}^{\div\div}(\begin{pmatrix}
 r_1^{\texttt{v}}\\
 -1
\end{pmatrix}, \bs r_2; \mathbb S) = \mathbb V:= \{ \boldsymbol{\tau}\in \boldsymbol{L}^2(\Omega;\mathbb S):\, \boldsymbol{\tau}|_T\in\mathbb P_{k}(T;\mathbb S)\;\forall~T\in\mathcal T_h, \notag\\
&\textrm{ all the DoFs~ \eqref{eq:divdiv+dof} by replacing \eqref{eq:symdivdiv+dof3}-\eqref{eq:symdivdiv+dof33} with \eqref{eq:divdivdof} except~\eqref{eq:symdivdivdof3t} are single-valued} \}. \notag%\label{eq:VkdivdivS}
\end{align}
As $\bs t^{\intercal}\bs \tau \bs n$ is local, the vector $\bs \tau\bs n$ is not continuous across edges. But $\tr_1^{\div\div}(\bs \tau)=\bs n^{\intercal}\boldsymbol{\tau}\boldsymbol{n}$ and $\tr_2^{\div\div}(\bs \tau)$ are continuous. So the space $\mathbb V_{k}^{\div\div}(\begin{pmatrix}
 r_1^{\texttt{v}}\\
 -1
\end{pmatrix}, \bs r_2; \mathbb S) \subset \bs H(\div\div,\Omega;\mathbb S)$ but not in $\bs H(\div,\Omega;\mathbb S)$. It cannot be derived from the BGG diagram~\eqref{eq:femdivdivbggexample1} as the induced space should be in $\bs H(\div,\Omega; \mathbb S)$.  
%, the dimension of space is
%\begin{align*}
%\dim \mathbb V_{k}^{\div\div}(\begin{pmatrix}
% r_1^{\texttt{v}}\\
% -1
%\end{pmatrix}, \bs r_2; \mathbb S)
%= 
%\dim \mathbb V_{k}^{\div\div^+}(\begin{pmatrix}
% r_1^{\texttt{v}}\\
% -1
%\end{pmatrix}, \bs r_2; \mathbb S)
%& - |\Delta_1(\mathcal T_h)| (k- 2r_1^{\texttt{v}} -1)\\
%& + 3 |\Delta_2(\mathcal T_h)| (k- 2r_1^{\texttt{v}} -1).
%\end{align*}

\begin{theorem}
The following finite element divdiv complex is exact
\begin{equation}\label{eq:divdivfemcomplex}
\resizebox{0.91\hsize}{!}{$
{\bf RT}\xrightarrow{\subset} \mathbb V_{k+1}^{\curl}(\begin{pmatrix}
r_1^{\texttt{v}} +1\\
 0
\end{pmatrix}; \mathbb R^2)
\xrightarrow{{\sym\curl}}\mathbb V_{k}^{\div\div}(\begin{pmatrix}
 r_1^{\texttt{v}}\\
 -1
\end{pmatrix}, \bs r_2; \mathbb S)\xrightarrow{\div\div}  \mathbb V^{L^2}_{k-2}(\bs r_2)\xrightarrow{}0.
$}
\end{equation}
\end{theorem}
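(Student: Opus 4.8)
The plan is to apply the abstract exactness criterion of Lemma~\ref{lm:abstract} with
$\mathcal V_0 = {\bf RT}$,
$\mathcal V_1 = \mathbb V_{k+1}^{\curl}(\begin{pmatrix} r_1^{\texttt{v}}+1\\ 0\end{pmatrix};\mathbb R^2)$,
$\mathcal V_2 = \mathbb V_{k}^{\div\div}(\begin{pmatrix} r_1^{\texttt{v}}\\ -1\end{pmatrix}, \bs r_2;\mathbb S)$, and
$\mathcal V_3 = \mathbb V^{L^2}_{k-2}(\bs r_2)$, with $\dd_1 = \sym\curl$ and $\dd_2 = \div\div$. The first task is to check that~\eqref{eq:divdivfemcomplex} is a complex, which reduces to the pointwise identity $\div\div\,\sym\curl = 0$ already implicit in the continuous complex~\eqref{eq:divdivcomplexH1div}. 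To verify the hypothesis $\mathcal V_0 = \mathcal V_1\cap\ker(\dd_1)$, I would use that $\mathcal V_1\subset\bs H^1(\Omega;\mathbb R^2)$; since on $\bs H^1$ the kernel of $\sym\curl$ is exactly ${\bf RT}$ and ${\bf RT}$ (degree $\le 1$ polynomials) is contained in $\mathcal V_1$, the intersection is precisely ${\bf RT}$.

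For the range condition in Lemma~\ref{lm:abstract} I would choose to prove $\div\div\,\mathcal V_2 = \mathcal V_3$. The essential observation is that $\mathbb V_{k}^{\div\div}$ differs from $\mathbb V_{k}^{\div\div^+}$ only by relaxing the continuity of $\bs t^{\intercal}\bs\tau\bs n$ across edges (the DoF~\eqref{eq:symdivdivdof3t} is taken local), so that
\[
\mathbb V_{k}^{\div\div^+}(\begin{pmatrix} r_1^{\texttt{v}}\\ -1\end{pmatrix}, \bs r_2;\mathbb S)\subseteq \mathbb V_{k}^{\div\div}(\begin{pmatrix} r_1^{\texttt{v}}\\ -1\end{pmatrix}, \bs r_2;\mathbb S).
\]
By the already established complex~\eqref{eq:femdivdivcomplex}, $\div\div$ maps the smaller space onto $\mathbb V^{L^2}_{k-2}(\bs r_2)$; combined with the complex property $\div\div\,\mathcal V_2\subseteq\mathcal V_3$ this forces $\div\div\,\mathcal V_2 = \mathcal V_3$.

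It remains to verify the dimension identity~\eqref{eq:abstractdimenidentity}, which, since $\dim{\bf RT}=3$, amounts to
\[
\dim\mathbb V_{k}^{\div\div}(\begin{pmatrix} r_1^{\texttt{v}}\\ -1\end{pmatrix}, \bs r_2;\mathbb S) - \dim\mathbb V^{L^2}_{k-2}(\bs r_2) = \dim\mathbb V_{k+1}^{\curl}(\begin{pmatrix} r_1^{\texttt{v}}+1\\ 0\end{pmatrix};\mathbb R^2) - 3.
\]
Using the uni-solvence of the DoFs~\eqref{eq:symdivdivdof1}-\eqref{eq:symdivdivdof6}, I would assemble $\dim\mathbb V_{k}^{\div\div}$ from its geometric decomposition into vertex, edge and interior contributions (with~\eqref{eq:symdivdivdof3t} doubled on interior edges since it is local) and collapse the alternating sum with Euler's formula, exactly as in the dimension computations already carried out for the $\div\div^+$ complex. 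Once this identity holds, Lemma~\ref{lm:abstract} yields the exactness of~\eqref{eq:divdivfemcomplex}.

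I expect the main obstacle to lie in this dimension bookkeeping: one must show that the enlargement of the middle space produced by localizing $\bs t^{\intercal}\bs\tau\bs n$ is matched edge-for-edge by the enlargement of the source space in passing from $\mathbb V_{k+1}^{\div}(\begin{pmatrix} r_1^{\texttt{v}}+1\\ 0\end{pmatrix}, \begin{pmatrix} r_1^{\texttt{v}}\\ 0\end{pmatrix})$ (used in~\eqref{eq:femdivdivcomplex}) to the full vector space $\mathbb V_{k+1}^{\curl}(\begin{pmatrix} r_1^{\texttt{v}}+1\\ 0\end{pmatrix};\mathbb R^2)$, so that the alternating dimension sum is unchanged. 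A convenient route I would try first is to subtract the dimension identity already established for~\eqref{eq:femdivdivcomplex}, reducing the claim to a single edgewise count equating these two enlargements and thereby avoiding a full reassembly of all DoF counts.
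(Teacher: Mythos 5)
Your overall strategy coincides with the paper's: establish the complex property, get surjectivity of $\div\div$ from the containment $\mathbb V_{k}^{\div\div^+}(\begin{pmatrix} r_1^{\texttt{v}}\\ -1\end{pmatrix}, \bs r_2;\mathbb S)\subseteq \mathbb V_{k}^{\div\div}(\begin{pmatrix} r_1^{\texttt{v}}\\ -1\end{pmatrix}, \bs r_2;\mathbb S)$ together with the exactness of~\eqref{eq:femdivdivcomplex}, and then close the argument by a dimension count fed into Lemma~\ref{lm:abstract}. The surjectivity step and the dimension bookkeeping (including your plan to compare edge contributions against the $\div\div^+$ case) are exactly what the paper does.

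The one place you understate what is required is the claim that verifying the complex property ``reduces to the pointwise identity $\div\div\,\sym\curl=0$''. It does not: the middle space $\mathbb V_{k}^{\div\div}$ is \emph{not} contained in $\bs H(\div,\Omega;\mathbb S)$, and membership in it is characterized by single-valuedness of the two divdiv traces $\bs n^{\intercal}\bs\tau\bs n$ and $\tr_2^{\div\div}(\bs\tau)=\partial_t(\bs t^{\intercal}\bs\tau\bs n)+\bs n^{\intercal}\div\bs\tau$ (the DoF~\eqref{eq:symdivdivdof3t} on $\bs t^{\intercal}\bs\tau\bs n$ being deliberately local). So to show $\sym\curl\,\mathcal V_1\subseteq\mathcal V_2$ you must check that these traces are single-valued for $\bs\tau=\sym\curl\bs v$ with $\bs v$ merely continuous; the paper does this via the identities $\bs n^{\intercal}\bs\tau\bs n=\partial_t(\bs v\cdot\bs n)$ and $\tr_2^{\div\div}(\bs\tau)=\partial_{tt}(\bs v\cdot\bs t)$, which show both traces depend only on $\bs v|_e$ and its tangential derivatives. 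This is a fillable omission rather than a fatal one, but it is the only genuinely nontrivial content in the ``complex'' half of the proof, so it should be made explicit. With that added, your proposal matches the paper's argument.
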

\begin{proof}
For $\boldsymbol{\tau}=\sym\curl\boldsymbol{v}$, we have~\cite[Lemma 2.2]{ChenHuang2020}
$$
\boldsymbol{n}^{\intercal}\boldsymbol{\tau}\boldsymbol{n}=\partial_t(\boldsymbol{v}\cdot\boldsymbol{n}),\quad  \tr_2^{\div\div}(\bs \tau)=\partial_{tt}(\boldsymbol{v}\cdot\boldsymbol{t}).
$$
Then it is obvious that~\eqref{eq:divdivfemcomplex} is a complex. We will show the exactness of complex~\eqref{eq:divdivfemcomplex}.

Noting that $\mathbb V_{k}^{\div\div^+}(\begin{pmatrix}
 r_1^{\texttt{v}}\\
 -1
\end{pmatrix}, \bs r_2; \mathbb S)\subseteq\mathbb V_{k}^{\div\div}(\begin{pmatrix}
 r_1^{\texttt{v}}\\
 -1
\end{pmatrix}, \bs r_2; \mathbb S)$, by the exactness of complex~\eqref{eq:femdivdivcomplex}, we have
$$
\div\div\mathbb V_{k}^{\div\div}(\begin{pmatrix}
 r_1^{\texttt{v}}\\
 -1
\end{pmatrix}, \bs r_2; \mathbb S)= \mathbb V^{L^2}_{k-2}(\bs r_2).
$$
On the other side,
\begin{align*}
% &\quad\dim \mathbb V_{k}^{\div\div}(\begin{pmatrix}
%  r_1^{\texttt{v}}\\
%  -1
% \end{pmatrix}, \bs r_2; \mathbb S)\cap\ker(\div\div)\\
&\quad\dim \mathbb V_{k}^{\div\div}(\begin{pmatrix}
 r_1^{\texttt{v}}\\
 -1
\end{pmatrix}, \bs r_2; \mathbb S)-\dim\mathbb V^{L^2}_{k-2}(\bs r_2) \\
% &=3{r_1^{\texttt{v}}+2\choose2}|\Delta_0(\mathcal T_h)|-{r_1^{\texttt{v}}\choose2}|\Delta_0(\mathcal T_h)| +(3k-6r_1^{\texttt{v}}-2)|\Delta_1(\mathcal T_h)| \\
% &\quad + |\Delta_2(\mathcal T_h)|\dim\mathbb B_k(\begin{pmatrix}
%  r_1^{\texttt{v}}\\
%  0
% \end{pmatrix})+|\Delta_2(\mathcal T_h)|\dim\mathbb B_{k+2}(\begin{pmatrix}
%  r_1^{\texttt{v}}+2\\
%  1
% \end{pmatrix})-4|\Delta_2(\mathcal T_h)| \\
&=2{r_1^{\texttt{v}}+3\choose2}|\Delta_0(\mathcal T_h)| +2(k-2r_1^{\texttt{v}}-2)|\Delta_1(\mathcal T_h)|  \\
&\quad + |\Delta_2(\mathcal T_h)|\dim\mathbb B_k(\begin{pmatrix}
 r_1^{\texttt{v}}\\
 -1
\end{pmatrix})+|\Delta_2(\mathcal T_h)|\dim\mathbb B_{k+2}(\begin{pmatrix}
 r_1^{\texttt{v}}+2\\
 1
\end{pmatrix})-|\Delta_2(\mathcal T_h)| \\
&\quad-3(|\Delta_0(\mathcal T_h)|-|\Delta_1(\mathcal T_h)|+|\Delta_2(\mathcal T_h)|).
\end{align*}
Thanks to the DoFs~\eqref{eq:C12d0}-\eqref{eq:C12d2} for $\mathbb V_{k+1}^{\curl}(\begin{pmatrix}
r_1^{\texttt{v}} +1\\
 0
\end{pmatrix}; \mathbb R^2)$ and the Euler's formula,
$$
\dim \mathbb V_{k}^{\div\div}(\begin{pmatrix}
 r_1^{\texttt{v}}\\
 -1
\end{pmatrix}, \bs r_2; \mathbb S)-\dim\mathbb V^{L^2}_{k-2}(\bs r_2)=\dim\mathbb V_{k+1}^{\curl}(\begin{pmatrix}
r_1^{\texttt{v}} +1\\
 0
\end{pmatrix}; \mathbb R^2)-3,
$$
which together with Lemma~\ref{lm:abstract} ends the proof.
\end{proof}

\begin{example}\rm
When $r_1^{\texttt{v}} = 0$ and $\bs r_2=-1$, we recover the finite element divdiv complex constructed in~\cite{ChenHuang2020} for $k\geq3$
\begin{equation*}
{\bf RT}\xrightarrow{\subset} 
{\rm Herm}_{k+1}(\begin{pmatrix}
1\\
 0
\end{pmatrix};
\mathbb R^2
)
\xrightarrow{{\sym\curl}}
{\rm CH}_k(
\begin{pmatrix}
 0\\
 -1
\end{pmatrix}
) 
\xrightarrow{\div\div}  
{\rm DG}_{k-2}(
\begin{pmatrix}
 -1\\
 -1
\end{pmatrix}
) 
\xrightarrow{}0.
\end{equation*}
\end{example}

The first finite element divdiv complex in~\cite{Chen;Hu;Huang:2018Multigrid}
is based on the distributional divdiv complex 
\begin{equation*}%\label{eq:divdivcomplexH-1}
{\bf RT}\xrightarrow{\subset} \boldsymbol{H}^{1}\left(\Omega ; \mathbb{R}^{2}\right) \xrightarrow{\sym\curl} \boldsymbol{H}^{-1}(\operatorname{divdiv}, \Omega ; \mathbb{S}) \xrightarrow{\div\div} H^{-1}(\Omega) \rightarrow 0,
\end{equation*}
and not covered in this paper. 

\section{Conclusion and future work}

In recent years, there have been several advancements in the construction of finite element Hessian complexes, elasticity complexes, and divdiv complexes, as documented in~\cite{ChenHuang2020,Chen;Huang:2020Finite,Chen;Huang:2021Finite,Christiansen;Gopalakrishnan;Guzman;Hu:2020discrete,HuLiang2021,Hu;Liang;Ma:2021Finite,Hu;Liang;Ma;Zhang:2022conforming,Hu;Ma;Zhang:2020family}. Our primary objective is to extend the BGG construction to finite element complexes, unifying these findings and producing more systematic results. In this work, we have achieved this goal in two dimensions. However, the extension to three dimensions presents several challenges.

One of the challenges is the existence of finite element de Rham complexes with varying degrees of smoothness in three dimensions, which we will discuss in a forthcoming work~\cite{Chen;Huang:2022FEMcomplex3D}. Additionally, there is a mismatch in the continuity of Sobolev spaces $H^1(\Omega), H(\curl,\Omega)$, and $H(\div,\Omega)$. The main obstacle to generalizing BGG to the discrete case is the mismatch of tangential or normal continuity of $H(\curl)$ or $H(\div)$ conforming finite element spaces, respectively. In~\cite{Arnold;Hu:2020Complexes}, these spaces are replaced by $H^s(\Omega)$ Sobolev spaces with matching indices $s$. We will investigate further solutions in our future work. Moreover, edge-type finite elements in three dimensions are the most complex elements and require additional investigation.

To facilitate a clear and effective discussion, we will separate the two-dimensional and three-dimensional cases. Although the two-dimensional case is more straightforward and provides some insight into the three-dimensional case, treating them simultaneously in a simple and effective way is not possible due to the differences between the two cases. For instance, the proof of the div stability $\mathbb V^{\div}_{k}(\mathcal T_h; \boldsymbol{r}_1,\boldsymbol{r}_2) \xrightarrow{\div} \mathbb V^{L^2}_{k-1}(\mathcal T_h; \boldsymbol{r}_2)\to 0$ can be established by dimension count in 2D, but is much more technical in 3D.

\bibliographystyle{abbrv}
\bibliography{paper,refgeodecomp}
\end{document}